\newcommand{\leg}[2]{\left(\frac{#1}{#2}\right)}
\newcommand{\N}{\mathbb{N}}
\newcommand{\R}{\mathbb{R}}
\newcommand{\s}{{\sigma}}
\renewcommand{\a}{\alpha}
\renewcommand{\b}{\beta}
\renewcommand{\d}{{\delta}}
\newcommand{\g}{\gamma}
\renewcommand{\l}{\lambda}
\newcommand{\z}{\zeta}
\renewcommand{\(}{\left\(}
\renewcommand{\)}{\right\)}
\newcommand{\pa}[2]{\left(\frac{#1}{#2}\right)}
\newcommand{\od}{\mathrm{od}}
\numberwithin{equation}{section}
\theoremstyle{plain}
\newtheorem{theorem}{Theorem}[section]
\newtheorem{lemma}[theorem]{Lemma}
\newtheorem{remark}[theorem]{Remark}
\newtheorem*{remark*}{Remark}
\newtheorem{corollary}[theorem]{Corollary}
\newtheorem{proposition}[theorem]{Proposition}
\theoremstyle{definition}
\numberwithin{equation}{section}
\setlist[enumerate]{leftmargin=*,label=\rm{(\arabic*)}}
\newcommand{\vast}{\bBigg@{4}}
\newcommand{\Vast}{\bBigg@{5}}
\title{Restricted Overpartitions and concave compositions: their modularity and asymptotics}
\author{Koustav Banerjee}
\author{Kathrin Bringmann}
\author{Atul Dixit}
\address{University of Cologne, Department of Mathematics and Computer Science, Weyertal 86-90, 50931 Cologne, Germany}
\email{kbanerj1@uni-koeln.de}
\email{kbringma@uni-koeln.de}
\address{Department of Mathematics, Indian Institute of Technology Gandhinagar, Palaj, Gandhinagar 382355, Gujarat, India}
\email{adixit@iitgn.ac.in}
\subjclass[2020]{11F03, 11F37, 11P82, 11P83, 33D15.}
\keywords{ Asymptotics, concave compositions, false theta functions, mock theta functions, mock Maass theta functions, modular forms, overpartitions, Tauberian theorem.}
\begin{document}

\begin{abstract}
In this paper we study restricted overpartitions and concave compositions. In several cases the resulting generating functions involve simultaneously modular forms, mock theta functions, mock Maass theta functions, and false theta functions, illustrating the appearance of mixed modular structures in restricted partition problems. Moreover, we obtain their asymptotic main terms. We also study related rank statistics.

\end{abstract}

\maketitle

%{\kbf To change title. \color{blue} K: Changed it now.}

%{\kbf To Study: 
%\begin{enumerate}
%	\item Whether $f(\z q,q^2;q^2)$ is a holomorphic part of a harmonic Maass form? 
%	\item Same question for $\nu(iq,i\left(\z q\right)^{\frac 12};q)$.
%\end{enumerate}}

\section{Introduction and Statements of results}   

A {\it{partition}} of $n\in \N_0$ is a non-increasing sequence of positive integers $\l_1,\l_2,\dots,\l_r$ such that $ \sum_{j=1}^{r}\l_j=n$. The partition function $p(n)$ counts the number of partitions of $n$. Its generating function is given by 
\begin{equation*}%\label{Euler}
P(q):=\sum_{n\ge 0}p(n)q^n=\frac{1}{\left(q\right)_{\infty}}. %\prod_{n\ge 1}\frac{1}{1-q^n},
\end{equation*}
Here, and throughout, $|q|<1$, and for $a\in\mathbb{C}$ and $n\in\mathbb{N}_0\cup \{\infty\}$, we set $(a)_n\hspace{-0.1 cm}=\hspace{-0.1 cm}(a;q)_n\hspace{-0.1 cm}:=\prod_{j=0}^{n-1}(1-aq^j)$.
%where, ere and throughout the paper, we take $|q|<1$. 
Note that $P(q)=\frac{q^{\frac{1}{24}}}{\eta(\tau)}$, where $\eta(\tau)$ is the {\it Dedekind $\eta$-function} defined by
\begin{equation*}
\eta(\tau) :=q^{\frac{1}{24}}\left(q\right)_{\infty}\ \left(q:=e^{2\pi i \tau}, \tau\in\mathbb{H}\right).
\end{equation*}
As $\eta$ is a modular form of weight $\frac 12$ for SL$_2(\mathbb{Z})$ (with multiplier), $P$ is a weakly holomorphic modular form. Hardy and Ramanujan \cite{HR} developed the Circle Method, to obtain an asymptotic expansion of $p(n)$ and in the simplest form, its main term equals
\begin{equation}\label{HRasymptotic}
p(n)\sim \frac{e^{\pi\sqrt{\frac{2n}{3}}}}{4\sqrt{3}n}\ \ \text{as}\ \ n \rightarrow \infty.
\end{equation}
 %There is a rich interplay between the  combinatorics of restricted partition functions and the modularity of their generating functions in that, often, each one acts as a good source of problems for the other. 
Dyson \cite{Dyson} introduced the so-called rank of a partition to give a combinatorial explanation of certain congruences of Ramanujan (see  \cite{Ramanujan}) for $p(n)$. %, namely,  
%\[
%p\left(5n+4\right)\equiv 0\pmod 5\ \ \text{and}\ \ p\left(7n+5\right)\equiv 0\pmod 7.
%\]
The {\it rank} of a partition is defined as the largest part minus the number of its parts. For $r\in \mathbb{N}$ and $a_1,\dots,a_r\in \mathbb{C}$, we let $(a_1,\dots,a_r)_n=(a_1,\dots,a_r;q)_n=(a_1)_n\dots (a_r)_n$. The generating function of $N(m,n)$, the number of partitions of $n$ with rank $m$, is given by (see \cite{AS})
\begin{equation}\label{rank1}
R(\z;q):=\sum_{\substack{n\ge 0\\m\in\mathbb{Z}}}N(m,n)\z^mq^n=\sum_{n\ge 0}\frac{q^{n^2}}{\left(\z q,\z^{-1}q\right)_n}.
\end{equation}
Note that $f(q)=R(-1;q)$ is one of the mock theta functions introduced by Ramanujan in his last letter to Hardy. For Ramanujan \cite{BB}, a {\it mock theta function} is a function $F$ that has asymptotic expansions at every roots of unity\footnote{Equivalently, considering $F$ as a function of $\tau$, we can also consider asymptotic expansions at rationals.} of the same type as those of weakly holomorphic modular forms, however there is not a single weakly holomorphic modular form whose asymptotic expansions agree at all roots of unity with that of $F$. In his thesis \cite{Zw}, Zwegers ``completed" Ramanujan's mock theta functions to  harmonic Maass forms\footnote{A {\it harmonic Maass form} is a function that satisfies modularity, is annihilated by the weight $k$ hyperbolic Laplacian, and satisfies a growth condition at the cusps. Its holomorphic part is called a {\it mock modular form}.} by adding additional non-holomorphic terms. Ono and the second author \cite{BO} showed that, for roots of unity $\z\neq 1$, $R(\zeta;q)$ is the holomorphic part of a weight $\frac 12$ harmonic Maass form. Later Berkovich and Garvan \cite[Section 5]{BG} defined the $M_2$-rank for partitions without repeated odd parts as the smallest integer at least $\frac{l(\l)}{2}$ minus the number of parts of $\l$, where $l(\l)$ is the largest part of $\l$. The $M_2$-rank generating function for partitions without repeated odd parts is given by
\begin{equation}\label{rank2}
\operatorname{R2}(\z;q):=\sum_{\substack{n\ge 0\\m\in\mathbb{Z}}}N_2(m,n)\z^mq^n=\sum_{n\ge 0}\frac{\left(-q;q^2\right)_nq^{n^2}}{\left(\z q^2,\z^{-1}q^2;q^2\right)_n},
\end{equation}
where $N_2(m,n)$ counts the number of partitions of $n$ without repeated odd parts with $M_2$-rank $m$  (see \cite[equation (1.1)]{LO}). Similar as for $R(\zeta;q)$, Ono, Rhoades, and the second author \cite{BOR} showed that if $\z$ is a root of unity, then $\operatorname{R2}(\z;q)$ is a mock modular form. A {\it mixed mock modular form} is a linear combination of product of modular forms and mock theta functions. An important example comes from concave compositions. A {\it concave composition} of $n$ is a sum of integers of the form
%\begin{equation*}%\label{concave}
$\sum_{j=1}^{r}a_j+\underline{c}+\sum_{j=1}^{s}b_j=n\ \ \underline{c},r,s\in\mathbb{N}_0$,
%\end{equation*}
where $a_1\ge a_2\ge\dots\ge a_r> \underline{c}< b_s\le\dots\le b_2\le b_1$. The distinguished point $\underline{c}$ is the {\it central part} of the sequence. Let $v(n)$ count the number of concave compositions of $n$. Its generating function is given by (see \cite{ARZ}) 
\begin{equation*}%\label{V(q)}
V(q):=\sum_{n\geq0}v(n)q^n=\sum_{n\geq0}\frac{q^n}{(q^{n+1})^2_{\infty}}.
\end{equation*}
By \cite{ARZ}, $V$ is a mixed mock modular form. We also require the theta function
\begin{equation}\label{Ramtheta}
\Theta(-q):=\sum_{n\in \mathbb{Z}}(-1)^n q^{n^2}=\frac{\left(q\right)^2_{\infty}}{\left(q^2;q^2\right)_{\infty}}
\end{equation}
which is a modular form of weight $\frac 12$. The Fourier coefficients of its reciprocal count overpartitions \cite{CL}. An {\it overpartition} of $n$ is a partition of $n$ in which the first occurrence of a part may be overlined. Let $\overline{p}(n)$ denote the number of overpartitions of $n$. %For example, $\overline{p}(3)=8$, the $8$ overpartitions of $3$ being
%\begin{align*}
%3,\ \overline{3},\ 2+1,\ \overline{2}+1,\ 2+\overline{1},\ \overline{2}+\overline{1},\ 1+1+1,\ \overline{1}+1+1. 
%\end{align*}
We denote the generating function of $\overline{p}(n)$ by $\overline{P}$. %, due to Corteel--Lovejoy \cite[equation (1.1)]{CL}, is given by
%\begin{equation*}
%\overline{P}(q):=\sum_{n\ge 0}\overline{p}(n)q^n=\frac{(-q)_{\infty}}{\left(q\right)_{\infty}}.
%\end{equation*}
%Note that $\overline{P}(q)$ is again a modular form of weight $-\frac 12$. Consider next the classical {\it theta function} (see \cite[equation (2.2.12)]{Andbook}) 
%\begin{equation*}%\label{Thet}
%\Theta(\tau):=\frac{\left(q^2;q^2\right)^5_{\infty}}{\left(q\right)^2_{\infty}\left(q^4;q^4\right)^2_{\infty}}= \sum_{n \in \mathbb{Z}}q^{n^2},
%\end{equation*}
%which is also a modular form of weight $\frac{1}{2}$.

 {\it False theta functions} are theta functions but twisted with sign-factors. Due to this ``twist", they are not modular. %false theta functions do not satisfy the modular transformation
 Now, we give an example of a sequence whose generating function involves a false theta function. A {\it unimodal sequence} of $n$ is a finite sequence of positive integers $a_j~ (1\le j\le r)$, $b_{\ell}~ (1\le \ell\le s)$, $r,s\in \mathbb{N}_0$, and $\underline{c}\in \mathbb{N}$ such that $a_r \leq \ldots \le a_2\leq a_1\le \underline{c} \geq b_1 \geq b_2\ge \ldots \geq b_s$ for some $k\in \mathbb{N}$ and $\sum_{j=1}^{r}a_j+\underline{c}+\sum_{j=1}^{s}b_j=n$. The generating function of\footnote{In \cite[Section 3]{And}, $u(n)$ was denoted by $\s\s(n)$ and was called stacks with summits of size $n$.} $u(n)$, the number of unimodal sequences of size $n$, is given by (see \cite{And})
\begin{align*}
U(q):=\ \sum_{n\ge 0}u(n)q^n=\sum_{n\ge 0}\frac{q^n}{(q)^2_n}= P^2(q)\sum_{n\geq0} (-1)^n q^{\frac{n(n+1)}{2}}.
\end{align*}
Thus $U$ is a {\it mixed false theta function}, which a linear combinations of false theta functions multiplied by modular forms. Like Jacobi theta functions fit into the theory of Jacobi form, by introducing an extra Jacobi variable, Nazaroglu and the second author \cite[Theorem 1.1]{BN} showed that the ``modular completion" of a two-variable false theta functions transforms like a Jacobi form.

Next we turn to rank statistics of restricted partitions. Andrews \cite{AE} considered partitions into distinct parts with even rank minus the number of those with odd rank. He showed that the corresponding generating function is Ramanujan's $\s$-function 
\begin{equation}\label{sigma function}
\sigma(q):=\sum_{n\geq0}\frac{q^{\frac{n(n+1)}{2}}}{(-q)_n}.
\end{equation}
Andrews, Dyson, and Hickerson \cite{ADH} proved that the Fourier coefficients of $\s$ assume all integers infinitely often by relating them to the arithmetic of $\mathbb{Q}(\sqrt{6})$. Cohen \cite{Cohen} then constructed a Maass form from the Fourier coefficients of a linear combination of $\sigma(q)$ and a ``companion". These two functions are related to quantum modular forms \cite{Zagier}. A function $f:\mathbb{Q}\setminus S\rightarrow\mathbb{R}$ (for some discrete subset $S$ of $\mathbb{Q}$) is a {\it quantum modular form} if, for $\g=\begin{psmallmatrix}
a& b\\
c& d
\end{psmallmatrix}\in \text{SL}_2(\mathbb{Z})$, the {\it obstruction to modularity} $f_{\g}(x):=f(x)-(cx+d)^{-k}f(\frac{ax+b}{cx+d})$ is ``nice"\footnote{In typical examples, $f_{\g}$ extends to a real-analytic function on $\mathbb{R}$ except for a finite set of points.}. Zwegers \cite{Zwegers} developed a broader framework by constructing {\it mock Maass theta functions} which are certain theta functions that are non-modular eigenfunctions of the hyperbolic Laplacian which can be completed to non-holomorphic modular functions. %He showed that $\s$ and $\s^*$ fits into this class.

 Various examples of modular and mock modular generating functions arising from partition statistics have been studied extensively in the literature. Classical instances include generating functions of ranks, unimodal sequences, and concave compositions. Here the corresponding generating functions are closely related to modular forms or mock theta functions. In contrast, the restricted overpartition generating functions studied in this paper lead naturally to decompositions involving several different modular objects simultaneously. This highlights a new setting in which mixed modular structures appear in a combinatorial context.

% Similarly,  the generating function of the number of partitions in which all even parts is less than twice the smallest part was shown in \cite[Theorem 3.3]{ADY}
%Following Dyson \cite{Dyson}, 

%{\bf \color{blue} K: Justifications for $\overline{p}_{\textup{od}}$ and $\overline{p}_{\textup{ev}}$ are given below.}

 Motivated by the rich modular behaviour exhibited by generating functions of restricted partitions and overpartitions, we investigate several families of overpartitions with parity and overlining restrictions. Such restrictions often lead to generating functions that admit tractable $q$-series representations while still exhibiting interesting modular properties. In this paper, we consider overpartitions of $n$ with the following restrictions:
\begin{enumerate}
	\item The smallest part and the even parts are non-overlined,
	\item If the smallest part is odd, then it appears exactly once.
\end{enumerate}
Let $\overline{p}_{\text{od}}(n)$ denote %\footnote{We use the notation $\overline{p}_{\text{od}}(n)$ as only the odd parts are allowed to be overlined.} 
the number of such overpartitions of $n$.  %$\overline{\mathcal{P}}_{\rm od}(n)$ the corresponding set. 
For example, $\overline{p}_{\text{od}}(6)=7$, since the admissible overpartitions are $6,\ 5+1,\ \overline{5}+1,\ 4+2,\ 3+2+1,\ \overline{3}+2+1,\ 2+2+2.$
We have 
\begin{equation}\label{pod}
\overline{P}_{\rm od}(q):=\sum_{n\ge 0}\overline{p}_{\text{od}}(n)q^n=\sum_{n\ge 0}\frac{\left(-q^{2n+3};q^2\right)_{\infty}q^{2n+1}}{\left(q^{2n+2},q^{2n+3};q^2\right)_{\infty}}+\sum_{n\ge 1}\frac{\left(-q^{2n+1};q^2\right)_{\infty}q^{2n}}{\left(q^{2n},q^{2n+1};q^2\right)_{\infty}}.
\end{equation}
Here we show that $\overline{P}_{\textup{od}}$ can be written in terms of a false theta function and a mixed mock Maass theta function.\footnote{{\it Mixed mock Maass theta functions} are linear combinations of products of modular forms and mock Maass theta functions.} For this define,\footnote{This is a mock Maass theta function by work of Zwegers (see the discussion at the end of Section 6 of \cite{Zwegers}). Similar to the case of $\s$, the Fourier coefficients of $W_1$ are related to the arithmetic of $\mathbb{Q}(\sqrt{2})$.} following Corson, Favero, Leisinger, and Zubairy \cite{CFLZ},
\begin{equation}\label{corson}
	W_1(q):=\sum_{n\ge 0}\frac{(-1)^n\left(q\right)_{n} q^{\frac{n(n+1)}{2}}}{\left(-q\right)_{n}}.%=\underset{|j|\le n }{\sum_{n\ge 0}}\!(-1)^{n+j}\left(1-q^{2n+1}\right)q^{2n^2+n-j^2}.
\end{equation} 

A notable feature of the generating functions arising in this work is that they involve simultaneously several types of modular objects. While generating functions in partition theory are often related to a single modular object (e.g. a (mock) modular form), the examples studied here naturally lead to decompositions involving several different modular structures at once. For instance, in \Cref{thm1} we express the generating function Pod(q) as a sum of a mixed mock Maass theta function, a modular form, and a false theta function. Similar phenomena appear for the other generating functions considered in this paper. These examples suggest that natural restrictions on overpartitions can lead to generating functions exhibiting mixed modular behaviour.

%This function resembles Ramanujan's $\sigma$-function in \eqref{sigma function}, %and was first studied by Corson, Favero, Leisinger, and Zubairy \cite{CFLZ} who proved that it has multiplicative coefficients and assumes all integer values. 
%The second one is a false theta function. %In Lemma \ref{lem1} 
\begin{theorem}\label{thm1}
	We have
	\begin{align*}
	%\operatorname{P_{od}}(q)
	\overline{P}_{\textup{od}}(q)\!&=\!-\frac{(1+q)\left(-q;q^2\right)_{\infty}}{q\left(q\right)_{\infty}}W_1(q)\!+\!\frac{(1+q)\left(-q;q^2\right)_{\infty}}{q\left(q\right)_{\infty}}-\!\frac{1+q}{q}\sum_{n\in \mathbb{Z}}\textnormal{sgn}(n)q^{\frac{n(3n-1)}{2}}.%\sum_{n\ge 1}\frac{q^{2n^2-n}}{(-q)_{2n}}.
	\end{align*}
\end{theorem}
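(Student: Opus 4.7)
The plan is to transform the double sum \eqref{pod} into the claimed expression in two stages: first, consolidate the two inner sums into a single $q$-hypergeometric series with an explicit $(1+q)$ factor; second, apply a Hecke-type transformation that produces $W_1(q)$ and the pentagonal false-theta series simultaneously.

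For the consolidation, I would use the identities
\[
(q^{2n+2},q^{2n+3};q^2)_\infty=\frac{(q)_\infty}{(q^2;q^2)_n(q;q^2)_{n+1}},\qquad (-q^{2n+3};q^2)_\infty=\frac{(-q;q^2)_\infty}{(-q;q^2)_{n+1}},
\]
together with their analogues for the second summand of \eqref{pod}, to extract the common prefactor $(-q;q^2)_\infty/(q)_\infty$ from both sums. After shifting the summation index of the second sum by one, its summand aligns with that of the first up to an extra factor of $q$, and combining yields
\[
\overline{P}_{\textup{od}}(q)=\frac{(1+q)(-q;q^2)_\infty}{(q)_\infty}\sum_{n\ge 0}\frac{(q;q^2)_{n+1}(q^2;q^2)_n}{(-q;q^2)_{n+1}}\,q^{2n+1}.
\]
Dividing through by $(1+q)(-q;q^2)_\infty/(q(q)_\infty)$, the theorem reduces to the ``skeleton'' identity
\[
q\sum_{n\ge 0}\frac{(q;q^2)_{n+1}(q^2;q^2)_n}{(-q;q^2)_{n+1}}\,q^{2n+1}=1-W_1(q)-\frac{(q)_\infty}{(-q;q^2)_\infty}\sum_{n\in\mathbb{Z}}\textnormal{sgn}(n)\,q^{\frac{n(3n-1)}{2}}.
\]

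To prove this skeleton identity I would apply a Heine-type transformation (or equivalently a Bailey-pair manipulation) to the left-hand side in order to convert it into a Hecke-type double series indexed over a cone in $\mathbb{Z}^2$. Decomposing this cone into a diagonal and two symmetric off-diagonal wedges, and simplifying each piece via the Jacobi triple product, one expects the on-diagonal contribution to collapse to $1-W_1(q)$ via the definition \eqref{corson} of $W_1$, while the off-diagonal contribution, after invoking Euler's pentagonal number theorem, produces the indefinite theta series with exponents $n(3n-1)/2$. The factor $(q)_\infty/(-q;q^2)_\infty$ on the right emerges as the overall normalization of the false-theta piece.

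The main obstacle will be locating the precise Hecke decomposition that yields both pieces with the correct normalizations; this is exactly the point where the mock-Maass-theta character of $W_1$ (as recorded at the end of Section~6 of \cite{Zwegers}) enters the argument. Once that identity is in hand, the rest is routine infinite-product manipulation to match prefactors, using in particular $(q)_\infty=(q;q^2)_\infty(q^2;q^2)_\infty$ and $(q;q^2)_\infty(-q;q^2)_\infty=(q^2;q^4)_\infty$. As a safety net, the skeleton identity can alternatively be verified by showing that both sides satisfy the same $q$-difference equation and agree on sufficiently many initial coefficients.
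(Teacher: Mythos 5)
Your consolidation step is correct and agrees with the paper's starting point: using $(q^{2n+2},q^{2n+3};q^2)_\infty=(q)_\infty/((q^2;q^2)_n(q;q^2)_{n+1})$ and its companions, the generating function \eqref{pod} does collapse to a single series equivalent to \eqref{eqn2}, and the resulting ``skeleton'' identity you write down is an equivalent reformulation of the theorem. However, the proof of that skeleton identity --- which is the entire substance of the theorem --- is not actually carried out. You propose to convert the left-hand side into a Hecke-type double series over a cone, split into diagonal and off-diagonal pieces, and hope (``one expects'') that the diagonal collapses to $1-W_1(q)$ and the wedges to the pentagonal false theta series; you then concede that ``the main obstacle will be locating the precise Hecke decomposition.'' That obstacle is the theorem. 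Nothing in your write-up identifies the transformation that effects the split, and there is no a priori reason a cone decomposition of a Hecke form would isolate $W_1(q)$ as defined by \eqref{corson} (a single Rogers--Fine-type sum, not manifestly a diagonal); the Hecke-type representation of $W_1$ from \cite[Theorem 2.3]{CFLZ} is used in the paper only for the asymptotic analysis in \Cref{thm1a}, not to prove \Cref{thm1}.

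For comparison, the paper's route is entirely different and much more concrete: it applies Ramanujan's transformation \cite[equation (3.17)$_\text{R}$]{A0} (Lemma \ref{lemident1} with $q\mapsto q^2$, $a=-1$, $b=-q$, $c=q$) to split the single series into two pieces, one carrying the infinite-product prefactor and one not, arriving at \eqref{eqn5}. The first piece is identified with $W_1(q)$ via Garvan's identity (Lemma \ref{Garvan} with $\zeta=-q^{-1}$), an elementary rearrangement $1+q^{2n}=1+q^{2n-1}-(1-q)q^{2n-1}$, and Kang's identity (Lemma \ref{kang}); the second piece is identified with $\sum_{n\in\mathbb{Z}}\textnormal{sgn}(n)q^{n(3n-1)/2}$ via Lemma \ref{Andbook} with $\zeta=-1$ together with a parity split and Kang's identity again. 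Your ``safety net'' of matching a $q$-difference equation plus finitely many coefficients is also not a proof as stated, since no such equation is exhibited. To complete your argument you would need to either locate the specific transformation you allude to, or adopt the paper's Lemma \ref{lemident1}/Lemma \ref{Garvan} mechanism.
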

\noindent Using \Cref{thm1} we obtain the following asymptotic main term for $\overline{p}_{\text{od}}(n)$.
\begin{corollary}\label{thm1a}
	We have, as $n\to\infty$, 
	\begin{equation*}
	\overline{p}_{\od}(n) \sim \frac{5\pi}{48\sqrt{2}n^{\frac{3}{2}}} e^{\pi\sqrt{\frac{5n}{6}}}.%\frac{5\pi}{12\sqrt{2}n^{\frac{3}{2}}} e^{\pi\sqrt{\frac{5n}{3}}}.
	\end{equation*}
\end{corollary}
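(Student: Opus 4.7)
The plan is to extract the dominant behavior of $\overline{P}_{\textup{od}}(q)$ as $q = e^{-2\pi t}\to 1^-$ ($t\to 0^+$) from the decomposition supplied by Theorem~\ref{thm1}, and then to invoke an Ingham-type Tauberian theorem. Writing
\begin{equation*}
\overline{P}_{\textup{od}}(q) = F(q)\bigl(1 - W_1(q)\bigr) - \frac{1+q}{q}\sum_{n\in\mathbb{Z}}\textnormal{sgn}(n)q^{\frac{n(3n-1)}{2}}, \quad F(q):=\frac{(1+q)\left(-q;q^2\right)_\infty}{q\left(q\right)_\infty},
\end{equation*}
the task reduces to the asymptotics of three factors. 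For the modular piece $F$, I would use the identity $\left(-q;q^2\right)_\infty = \left(q^2;q^2\right)_\infty^2/\bigl((q)_\infty\left(q^4;q^4\right)_\infty\bigr)$ together with $(q^k;q^k)_\infty = q^{-k/24}\eta(k\tau)$ to write $F(q) = (1+q)q^{-11/12}\eta(2\tau)^2/\bigl(\eta(\tau)^2\eta(4\tau)\bigr)$; applying $\eta(-1/\tau)=\sqrt{-i\tau}\,\eta(\tau)$ at $\tau=it$ and combining the cusp exponents via $-\tfrac{1}{12}+\tfrac{1}{6}+\tfrac{1}{48}=\tfrac{5}{48}$ then yields
\begin{equation*}
F(e^{-2\pi t}) \sim 2\sqrt{t}\,e^{5\pi/(48t)}, \qquad t\to 0^+.
\end{equation*}

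Next, substituting $q = e^{-2\pi t}$ into the defining series for $W_1$ gives $(q)_n/(-q)_n=\prod_{j=1}^n\tanh(\pi jt)$, which vanishes at $t=0$ for every $n\ge 1$, so $W_1(1^-) = 1$. The $n=1$ summand contributes $-\pi t+O(t^2)$ via $\tanh(\pi t)=\pi t+O(t^3)$; to control the tail $n\ge 2$ I would split at an optimally chosen $N=N(t)$, using $\prod_{j=1}^n\tanh(\pi jt)\le n!(\pi t)^n$ for $n\le N$ and the Gaussian decay of $q^{n(n+1)/2}$ for $n > N$, which produces an $O(t^2)$ remainder. Hence $1-W_1(e^{-2\pi t}) = \pi t+O(t^2)$. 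The false theta piece telescopes to $\frac{1+q}{q}\sum_{n\ge 1}(1-q^n)q^{n(3n-1)/2}$, which is $O(1)$ as $t\to 0^+$ by comparison with a Gaussian integral, and is therefore exponentially subdominant. Combining,
\begin{equation*}
\overline{P}_{\textup{od}}(e^{-2\pi t}) \sim 2\pi\,t^{3/2}e^{5\pi/(48t)}, \qquad t\to 0^+.
\end{equation*}

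Since $\overline{p}_{\od}(n)\ge 0$ is weakly monotone (a direct combinatorial check), Ingham's Tauberian theorem applies. After the rescaling $t'=2\pi t$ to match $q = e^{-t'}$, one reads off parameters $\lambda=(2\pi)^{-1/2}$, $\alpha=\tfrac{3}{2}$, and $A=\tfrac{5\pi^2}{24}$, and the standard formula $a_n\sim\frac{\lambda A^{\alpha/2+1/4}}{2\sqrt{\pi}\,n^{\alpha/2+3/4}}e^{2\sqrt{An}}$ collapses to precisely $\overline{p}_{\od}(n)\sim\frac{5\pi}{48\sqrt{2}\,n^{3/2}}e^{\pi\sqrt{5n/6}}$.

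The main obstacle is rigorously establishing $1-W_1(e^{-2\pi t})=\pi t+O(t^2)$: the formal Taylor expansion of $W_1$ at $q=1$ produces the genuinely divergent series $\sum_{n\ge 1}(-1)^{n+1}n!(\pi t)^n$, so the split-and-estimate argument sketched above has to be executed with care. An alternative route is to invoke the mock Maass modular transformation law of $W_1$ developed in \cite{Zwegers}, transferring the behavior at $\tau = 0$ to the cusp $\tau = i\infty$, where $W_1$ is manifestly close to $1$.
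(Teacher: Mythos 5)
Your overall strategy coincides with the paper's: split $\overline{P}_{\textup{od}}$ via Theorem \ref{thm1}, discard the false theta piece as subexponential, extract the growth $2\sqrt{t}\,e^{5\pi/(48t)}$ of the infinite-product factor, reduce everything to $1-W_1\sim \pi t$, and feed the result into Ingham's theorem; your constants ($\lambda=(2\pi)^{-1/2}$, $\beta=\tfrac32$, $\gamma=\tfrac{5\pi^2}{24}$ after rescaling) all agree with the paper's. Where you genuinely diverge is the treatment of $W_1$, which is the heart of the matter: the paper never touches the Eulerian series directly, but instead invokes the Hecke-type double-sum representation of $W_1$ from \cite[Theorem 2.3]{CFLZ} and applies one- and two-dimensional Euler--Maclaurin summation (Propositions \ref{EM1} and \ref{EM2}) to get $1-W_1(e^{-z})=\tfrac z2+O(z^2)$; this occupies most of its proof. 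Your $\tanh$-product route ($(q)_n/(-q)_n=\prod_{j\le n}\tanh(\pi jt)$, isolate $n=1$, split the tail at $N\asymp t^{-1}$) is more elementary and does work on the real axis: for $2\le n\le N$ the bounds $n!(\pi t)^n$ decrease geometrically, and beyond $N$ the Gaussian factor $q^{n(n+1)/2}$ takes over, so the remainder is indeed $O(t^2)$. What the paper's heavier machinery buys is validity of the estimate in the complex cones and a method that yields full asymptotic expansions, not just the leading term.

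Two gaps remain. First, Proposition \ref{P:CorToIngham} requires, in addition to the real-axis asymptotic $B(e^{-t})\sim\lambda t^{\beta}e^{\gamma/t}$, the bound $B(e^{-z})\ll |z|^{\beta}e^{\gamma/|z|}$ as $z\to0$ in every region $R_{\Delta}$; this extra hypothesis is precisely what the corrected version of Ingham's theorem in \cite{BJM} adds, and omitting it is the classical pitfall. Your argument is purely real-variable ($\tanh$, Gaussian comparison), so the cone condition is not verified; the paper sidesteps this by proving all of its estimates for $z\to0$ in $R_{\Delta}$ from the outset. The repair is routine (the $\eta$-transformation holds for complex $\tau$, and the product bounds on $(q)_n/(-q)_n$ persist in $R_{\Delta}$), but it has to be carried out before "Ingham's Tauberian theorem applies." Second, dismissing monotonicity as "a direct combinatorial check" is too quick: an injection from the counted overpartitions of $n$ into those of $n+1$ must respect the constraints on the smallest part and on overlining, and the paper instead proves $(1-q)\overline{P}_{\textup{od}}(q)\succeq0$ by a nontrivial $q$-series positivity argument (Lemma \ref{monotone1}). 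You should either exhibit the injection or supply such an argument.
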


%{\bf \color{blue} K: In draft version 40, we have explained why the changes in the following definition have been made.}

We next consider overpartitions without repeated odd parts with the additional restrictions:
\begin{enumerate}
	\item The odd parts are non-overlined.
	\item  If the smallest part is even and occurs only once, then it is also non-overlined. 
\end{enumerate}
% that only the odd parts and the even part (if it the smallest part and occurs only once) {\bf KB: I don't think it is clear whether they must be. \color{blue} K: Now clarified by correcting the phrase ``and the smallest even part (if it occurs only once)" to ``and the even part (if it the smallest part and occurs only once)"} are non-overlined. 
 Let $\overline{p}_{\textup{ev}}(n)$ denote %\footnote{We use the notation $\overline{p}_{\text{ev}}(n)$ as only the even parts are allowed to be overlined.} 
 the number of such weighted overpartitions of $n$. %$\overline{\operatorname{\mathcal{P}}}_{\textup{ev}}(n)$ the corresponding set. %given below:
%\textcolor{violet}{ \begin{enumerate}
 %	\item the odd parts as well as the smallest part are always
 %	non-overlined, and
 %	 	\item if the smallest part is even and occurs more than once, then
 %	it is counted with weight $2$.
%\end{enumerate}} %and
%\noindent(2) if the smallest part is even and occurs more than once, then it may be overlined.%is counted with weight $2$.
We have $\overline{p}_{\textup{ev}}(6)=8$, since the admissible overpartitions are  %For example, if $n=6$, as the overpartitions enumerated by  $\overline{p}_{\textup{ev}}(n)$ are 
$6,\ 5+1,\ 4+2,\ \overline{4}+2,\ 3+2+1,\ 3+\overline{2}+1,\ 2+2+2,\ \overline{2}+2+2$. %Hence $\overline{p}_{\textup{ev}}(6)=8$. 
%if $n=7$, then the overpartitions enumerated by $\overline{p}_{\textup{ev}}(n)$  are
%$7$, $6+1$, $\overline{6}+1$, $5+2$, $4+3$, $\overline{4}+3$, $4+2+1$, $\overline{4}+2+1$, $\overline{4}+\overline{2}+1$, $4+\overline{2}+1$, $3+\overline{2}+2$ (counted with weight $2$), $2+2+2+1$, and $\overline{2}+2+2+1$. Thus, $\overline{p}_{\textup{ev}}(7)=14$.
It is not difficult to see that the generating function of $\overline{p}_{\textup{ev}}(n)$ is given by 
\begin{align}\label{thm2eqn0}
\overline{P}_{\text{ev}}(q)&:=\sum_{n\ge 1}\overline{p}_{\textup{ev}}(n)q^n=\sum_{n\ge 0}\frac{\!\left(-q^{2n+2},-q^{2n+3};q^2\right)_{\infty}q^{2n+1}}{(q^{2n+2};q^2)_{\infty}}+\sum_{n\ge 1}\frac{\left(-q^{2n},-q^{2n+1};q^2\right)_{\infty}q^{2n}}{(q^{2n};q^2)_{\infty}}.
\end{align}
The following theorem shows that $\overline{P}_{\textup{ev}}$ is a mixed mock modular form. To be more precise, consider Ramanujan's third order mock theta function %$\phi(q)$ %\cite[equation (2.1.2)]{AB5} 
\begin{equation}\label{Ramanujanphi}
\phi(q):=\sum_{n\ge0}\frac{q^{n^2}}{\left(-q^2;q^2\right)_n}.
\end{equation}
%Following Zwegers' framework discussed above, $\phi(q)$ is a mock modular form.
%{\bf KB: All descriptions about Zwegers' should be where we first have mock theta functions. \color{blue} K: to be moved in the next version.} Zwegers \cite{Zw} placed $\phi(q)$ into a modular framework. Precisely, he related $\phi(q)$ to the so-called {\it Zwegers' $\mu$-function} (see \cite[page 354]{BFOR}). Now the completion of $\mu$ (after adding a non-holomorphic piece to $\mu$), denoted by $\widehat{\mu}$, specialized to torsion points is a harmonic Maass form. {\bf KB: Sentence off. \color{blue} K: I wanted to say that $\phi(q)$ is a mock modular form because it is a holomorphic part of a harmonic Maass form. \bf\color{black} KB: Then write that! \color{blue} K: written now.} Thus $\phi(q)$ is a mock modular form because it is a holomorphic part of a harmonic Maass form. 
%Moreover, up to an ``error factor", $\mu$ transforms like a holomorphic Jacobi form which makes it a {\it mock Jacobi form} and $\widehat{\mu}$ transform like a two-variable Jacobi form.
%The following theorem shows that $\overline{P}_{\text{ev}}(q)$ is a mixed mock modular form. %is a combination of {\bf KB: ? \color{blue} K: written as ``a combination of two pieces"} two pieces, where the first one is (basically) a modular form and the second one is basically a mock theta function. 
\begin{theorem}\label{thm2}
	We have 
	\[
	%P_{\textup{ev}}(q)
	\overline{P}_{\textup{ev}}(q)=\frac{1+q}{2(q)_{\infty}}\left(1-\frac{(q)_{\infty}^2}{(-q)_{\infty}^{2}}\right)+(1+q)\left(\phi(-q)-1\right).
	\]
\end{theorem}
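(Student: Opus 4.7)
My plan is first to combine the two series in \eqref{thm2eqn0} into a single one, then to reduce the remaining identity to a basic hypergeometric form that can be attacked via standard $q$-series transformations. Observe that $(-q^{2n+2},-q^{2n+3};q^2)_\infty = (-q^{2n+2};q)_\infty$ and $(-q^{2n},-q^{2n+1};q^2)_\infty = (-q^{2n};q)_\infty$. Re-indexing $n \mapsto n+1$ in the second sum of \eqref{thm2eqn0} shows that its summand equals $q$ times the summand of the first sum at the same index. Hence
\[
\overline{P}_{\textup{ev}}(q) = (1+q)\sum_{n\ge 0}\frac{(-q^{2n+2};q)_\infty\, q^{2n+1}}{(q^{2n+2};q^2)_\infty}.
\]
Using the tail identities $(-q^{2n+2};q)_\infty = (-q)_\infty/(-q)_{2n+1}$, $(q^{2n+2};q^2)_\infty = (q^2;q^2)_\infty/(q^2;q^2)_n$, and the factorization $(q^2;q^2)_\infty = (q)_\infty(-q)_\infty$, the product ratios collapse to
\[
\overline{P}_{\textup{ev}}(q) = \frac{1+q}{(q)_\infty}\, L(q), \qquad L(q) := \sum_{n\ge 0}\frac{(q^2;q^2)_n\, q^{2n+1}}{(-q)_{2n+1}}.
\]

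Since $(q)_\infty/(-q)_\infty = \Theta(-q)$ by \eqref{Ramtheta}, the theorem is equivalent to the core $q$-series identity
\[
L(q) = \tfrac{1}{2}\left(1 - \tfrac{(q)_\infty^2}{(-q)_\infty^2}\right) + (q)_\infty\bigl(\phi(-q) - 1\bigr).
\]
To attack this, I would recast $L(q)$ as a basic hypergeometric series. Writing $(-q)_{2n+1} = (1+q)(-q^2;q^2)_n(-q^3;q^2)_n$ yields
\[
L(q) = \frac{q}{1+q}\sum_{n\ge 0}\frac{(q^2;q^2)_n\, q^{2n}}{(-q^2,-q^3;q^2)_n},
\]
a $_2\phi_1$-type series in base $q^2$. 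The plan is to apply Heine's transformation (iterated if necessary) to split this sum into two parts: (i) a series whose value is an infinite product, contributing the theta quotient $(q)_\infty^2/(-q)_\infty^2$ after invoking Jacobi's triple product; and (ii) a residual Hecke-type sum whose inner summation, upon multiplication by $(q)_\infty$, identifies with Ramanujan's mock theta function $\phi(-q)$ defined in \eqref{Ramanujanphi}.

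The main obstacle is the execution of this final step: selecting the precise transformation that produces a clean modular-plus-mock split rather than a tangled double sum. A single application of Heine in its textbook form is unlikely to suffice; one expects either a two-step Heine iteration or a Bailey-pair manipulation tailored to the structure of $\phi$. Once the correct transformation is in hand, the identification of each piece with the claimed modular and mock contributions reduces to a routine computation using Jacobi's triple product, the relation $(q)_\infty/(-q)_\infty = \Theta(-q)$, and the series definition of $\phi$.
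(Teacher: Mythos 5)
Your opening reduction is correct and matches the paper exactly: combining the two sums in \eqref{thm2eqn0}, collapsing the tail products, and rewriting $(-q)_{2n+1}=(1+q)(-q^2,-q^3;q^2)_n$ lands you precisely on the paper's identity \eqref{thm2eqn1}, namely $\overline{P}_{\textup{ev}}(q)=\frac{q}{(q)_{\infty}}\sum_{n\ge 0}\frac{(q^2;q^2)_{n}q^{2n}}{(-q^2,-q^3;q^2)_{n}}$. From that point on, however, your argument is a plan rather than a proof, and you say so yourself: you do not identify the transformation that produces the modular-plus-mock split, and you only conjecture that ``a two-step Heine iteration or a Bailey-pair manipulation'' should work. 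That unexecuted step is the entire content of the theorem. The paper's engine is not Heine but Ramanujan's three-term Lost Notebook identity (Lemma \ref{lemident2}, with $q\mapsto q^2$, $a=c=1$, $b=q$), which splits the $_2\phi_1$-type sum into $2\sum_{n\ge 0}\frac{(-1)^n(q^2;q^2)_nq^{n^2+2n}}{(-q^2;q^2)_{n+1}(-q^3;q^2)_n}$ and $-(1+q)(q)_\infty\sum_{n\ge 0}\frac{(-1)^nq^{n^2+2n}}{(-q^2;q^2)_{n+1}}$; the second sum is $-(\phi(-q)-1)/q$ by shifting the index in \eqref{Ramanujanphi}.

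There is also a structural misprediction worth flagging. You expect the theta quotient $(q)_\infty^2/(-q)_\infty^2$ to arise from ``a series whose value is an infinite product \ldots after invoking Jacobi's triple product.'' In fact it is the other way around: the term carrying the infinite-product prefactor is the one that becomes the mock piece $\phi(-q)$, while the theta quotient comes from the Hecke-type sum $\sum_{n\ge 1}\frac{(-1)^n(q^2;q^2)_{n-1}q^{n^2}}{(-q)_{2n}}$, which does not telescope to a product via Jacobi's triple product but requires the separate Lemma \ref{theta squared}; that lemma is itself proved by converting the sum to a Lambert series via the Bhoria--Eyyunni--Maji identity (Lemma \ref{BEMeqn}) and then applying the classical evaluation of Lemma \ref{AGLeqn}. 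So two genuinely nontrivial ingredients --- the correct three-term transformation and the Lambert-series evaluation of the residual sum --- are both missing from your proposal.
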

\noindent As an application of \Cref{thm2}, we determine the asymptotic main term of $\overline{p}_{\textup{ev}}(n)$. 
\begin{corollary}\label{thm6.2}
	We have, as $n\to \infty$,
	\begin{equation*}
	\overline{p}_{\textup{ev}}(n)\sim \frac{e^{\pi \sqrt{\frac{2n}{3}}}}{4\sqrt{3}n}.
	\end{equation*}
\end{corollary}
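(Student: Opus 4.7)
The plan is to extract the $n$-th Fourier coefficient from the decomposition given in \Cref{thm2},
\[
\overline{P}_{\textup{ev}}(q) \;=\; \frac{1+q}{2(q)_{\infty}} \;-\; \frac{(1+q)(q)_{\infty}}{2(-q)_{\infty}^{2}} \;+\; (1+q)\bigl(\phi(-q)-1\bigr),
\]
and to identify the piece $\frac{1+q}{2(q)_\infty}$ as the sole source of the main asymptotic, with the other two summands shown to be of strictly smaller exponential order.

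First, the dominant term is straightforward. Since $[q^n]\frac{1+q}{2(q)_\infty} = \tfrac12 (p(n)+p(n-1))$, and since the Hardy--Ramanujan asymptotic \eqref{HRasymptotic} yields $p(n-1)/p(n) \to 1$ as $n\to\infty$, this contribution satisfies
\[
\tfrac12 \bigl(p(n)+p(n-1)\bigr) \;\sim\; p(n) \;\sim\; \frac{e^{\pi\sqrt{2n/3}}}{4\sqrt{3}\,n},
\]
matching the target asymptotic exactly, constant included.

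Next I control the two remaining summands. Using the identity $(-q)_\infty = (q^2)_\infty/(q)_\infty$, the second summand rewrites as $\frac{(1+q)(q)_\infty^3}{2(q^2)_\infty^2}$; up to a fractional power of $q$ this is $(1+q)\eta(\tau)^3/\bigl(2\eta(2\tau)^2\bigr)$, an eta quotient of weight $\tfrac12$, and hence a holomorphic modular form whose Fourier coefficients have polynomial growth in $n$ (indeed they are bounded by theta-series coefficients by Serre--Stark). For the third summand, $\phi$ is Ramanujan's third-order mock theta function from \eqref{Ramanujanphi}; by the classical Dragonette--Andrews asymptotic, or equivalently via the completion of $\phi$ to a weight $\tfrac12$ harmonic Maass form, the coefficients of $\phi(q)$ grow at most on the order of $e^{\pi\sqrt{n/6}}$. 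Since $[q^n]\phi(-q) = (-1)^n [q^n]\phi(q)$, the same bound applies to $\phi(-q)-1$. Because $\pi\sqrt{2n/3} = 2\pi\sqrt{n/6}$, we have $e^{\pi\sqrt{n/6}} = o\bigl(e^{\pi\sqrt{2n/3}}/n\bigr)$, so both of these contributions are negligible next to the main term.

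Combining the three estimates gives the claimed asymptotic. I expect the third step to be the main obstacle: securing a clean subexponential bound on the coefficients of $\phi$ requires either quoting precise mock modular asymptotics or running a short circle-method argument on the harmonic Maass completion of $\phi$. The modular form in the second term is routine, and the first term is a direct consequence of \eqref{HRasymptotic}.
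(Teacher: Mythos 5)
Your argument is correct in substance but follows a genuinely different route from the paper. The paper does not extract coefficients at all: it verifies that $\overline{p}_{\textup{ev}}(n)$ is non-negative and weakly increasing (Lemma \ref{monotone2}, a $q$-series positivity argument), establishes $\overline{P}_{\textup{ev}}(e^{-z})\sim\sqrt{z/(2\pi)}\,e^{\pi^2/(6z)}$ as $z\to0$ in the cones $R_\Delta$ --- with the mock theta piece controlled by the generating-function bound $(1+q)(\phi(-q)-1)\ll e^{\pi^2/(24z)}$ cited from \cite{BCN} --- and then invokes Ingham's Tauberian theorem (Proposition \ref{P:CorToIngham}). Your approach trades the monotonicity lemma and the Tauberian machinery for coefficient-level input: the Hardy--Ramanujan asymptotic for the main term, polynomial bounds for the eta quotient (your Serre--Stark observation is valid since $\eta^3(\tau)/\eta^2(2\tau)$ is holomorphic at both cusps of $\Gamma_0(2)$), and a subexponential bound on the coefficients of $\phi$. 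The one imprecision is your citation: the Dragonette--Andrews asymptotic concerns $f(q)$, not $\phi(q)$. This is easily repaired --- and most elegantly from within the paper itself --- via Corollary \ref{RamanujanIdentity}, which gives $2\phi(-q)=f(q)+\Theta^2(-q)/(q)_\infty$; since $\Theta^2(-q)/(q)_\infty=(q)_\infty^3/(q^2;q^2)_\infty^2$ is exactly the polynomially bounded eta quotient you already handled, the coefficients of $\phi(-q)$ inherit the $O(e^{\pi\sqrt{n/6}})$ bound from those of $f(q)$, and $e^{\pi\sqrt{n/6}}=e^{\frac12\pi\sqrt{2n/3}}=o\bigl(e^{\pi\sqrt{2n/3}}/n\bigr)$ as you say. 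What each approach buys: yours is shorter if one is willing to quote mock theta coefficient asymptotics as a black box, and it avoids having to prove monotonicity; the paper's is self-contained at the level of generating functions and only needs an upper bound on $\phi(-q)$ near $q=1$ rather than coefficient asymptotics, at the cost of the combinatorial positivity argument in Lemma \ref{monotone2}.
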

As an application, we obtain asymptotics of $g(n)$, where $g(n)$ counts certain restricted colored partitions studied by Andrews and Kumar \cite{andrews-kumar} whose generating function is given by %(see \cite[equation (6.1)]{andrews-kumar})
\begin{align*}%\label{andkumar}
\sum_{n\geq0} g(n)q^n:=\sum_{n\ge0}\frac{q^{2n+1}}{(q^{2n+1})_{\infty}(q^{2n+2};q^2)_n}.
\end{align*}
 Using this, \cite[Theorem 6.1]{andrews-kumar}, Theorem \ref{thm2}, and the facts that $g(0)=0$ and $g(1)=1$, we obtain
\begin{align}\label{reln}
g(n)+\overline{p}_{\textup{ev}}(n-1)=2p(n-1).
\end{align}
Employing this, \Cref{thm6.2}, and \eqref{HRasymptotic}, we have the following.
\begin{corollary}\label{gcornew}
We have, as $n\to \infty$,
\[
g(n) \sim \frac{e^{\pi \sqrt{\frac{2n}{3}}}}{4\sqrt{3}n}.\]
\end{corollary}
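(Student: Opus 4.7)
The plan is to combine the linear relation \eqref{reln} with the two asymptotic statements already at our disposal, namely the Hardy--Ramanujan asymptotic \eqref{HRasymptotic} for $p(n)$ and \Cref{thm6.2} for $\overline{p}_{\textup{ev}}(n)$. Rewriting \eqref{reln}, I would start from
\[
g(n) = 2p(n-1) - \overline{p}_{\textup{ev}}(n-1),
\]
so the asymptotic of $g(n)$ is controlled entirely by the asymptotics of the two known sequences at $n-1$.

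Next I would check that, as $n\to\infty$, the shift $n \mapsto n-1$ does not disturb the leading behavior: since $\sqrt{2(n-1)/3} = \sqrt{2n/3}\bigl(1+O(n^{-1})\bigr)$, both $p(n-1)$ and $\overline{p}_{\textup{ev}}(n-1)$ are asymptotic to the same function
\[
A(n) := \frac{e^{\pi\sqrt{2n/3}}}{4\sqrt{3}\,n}
\]
as $n\to\infty$. Substituting,
\[
\frac{g(n)}{A(n)} \;=\; 2\cdot\frac{p(n-1)}{A(n)} - \frac{\overline{p}_{\textup{ev}}(n-1)}{A(n)} \;\longrightarrow\; 2\cdot 1 - 1 = 1,
\]
which yields $g(n) \sim A(n)$, exactly the claimed asymptotic.

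The only conceptual point worth flagging in the write-up is that although $g(n)$ is a difference of two sequences with the same leading order, the leading term does not cancel because the coefficients in the combination are $2$ and $-1$ rather than $1$ and $-1$. Thus no finer (subleading) information about $p(n-1)$ or $\overline{p}_{\textup{ev}}(n-1)$ is needed, and the corollary follows in one or two lines from the stated inputs. There is no substantive obstacle; the proof is essentially an accounting of the relation \eqref{reln} together with \eqref{HRasymptotic} and \Cref{thm6.2}.
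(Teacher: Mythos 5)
Your proof is correct and follows exactly the route the paper intends: the paper derives this corollary in one line from the relation \eqref{reln} together with \eqref{HRasymptotic} and \Cref{thm6.2}, precisely as you do. Your added remarks---that the shift $n\mapsto n-1$ leaves the leading term unchanged because $\pi\sqrt{2(n-1)/3}-\pi\sqrt{2n/3}\to 0$, and that the combination $2-1=1$ prevents cancellation of the main term---are exactly the (unstated) checks that make the paper's one-line deduction valid.
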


We also give a refinement of $\overline{p}_{\text{od}}(n)$. Let $\overline{p}_{\text{od}}(m,n)$ count those overpartitions which are also counted by $\overline{p}_{\text{od}}(n)$, where the total number of even parts is $m$ counted in the following way. If the smallest part of $\l$ is odd, then $m$ counts the number of even parts and if the smallest part of $\l$ is even, then $m$ counts the number of even parts minus one. 
%\begin{enumerate}
%	\item the smallest part and the even parts are non-overlined,
%	\item if the smallest part is odd, then it appears exactly once,
%	\item the total number of even part except for the smallest one ({\bf KB: I don't think clear. \color{blue} K: to be written more clearly.} counted twice if its multiplicity is at least two) is $m$.
%\end{enumerate}
% Let $\overline{\operatorname{\mathcal{P}}}_{\textup{od}}(n)$ denote the set of all such restricted overpartitions of $n$. 
 For example, taking $n=6$, we have:
\vspace{0.3 cm} %{\bf \color{blue} K: now the example makes it clear.}
\begin{center}
	{\renewcommand{\arraystretch}{1.3}%
		\begin{tabular}{|c|c|c|c|c|c|c|c|}
			\hline
			$\lambda $ 
			& $6$ 
			& $5+1$ 
			& $\overline{5}+1$ 
			& $4+2$ 
			& $3+2+1$ 
			& $\overline{3}+2+1$ 
			& $2+2+2$ \\
			\hline
			$m$ 
			& $0$ 
			& $0$ 
			& $0$ 
			& $1$ 
			& $1$ 
			& $1$ 
			& $2$ \\
			\hline
		\end{tabular}
	}
\end{center}
\vspace{0.3 cm}
Thus $\overline{p}_{\textup{od}}(0,6)=3$, $\overline{p}_{\textup{od}}(1,6)=3$, and $\overline{p}_{\textup{od}}(2,6)=1$. We have
\begin{equation}\label{zgen2}
\overline{P}_{\textup{od}}(\z;q):=\!\sum_{m,n\ge 0}\!\overline{p}_{\textup{od}}(m,n)\z^m q^n=\sum_{n\ge 0}\frac{\left(-q^{2n+3};q^2\right)_{\infty}q^{2n+1}}{\left(\z q^{2n+2},q^{2n+3};q^2\right)_{\infty}}+\sum_{n\ge 1}\frac{\left(-q^{2n+1};q^2\right)_{\infty}q^{2n}}{\left(\z q^{2n},q^{2n+1};q^2\right)_{\infty}}.
\end{equation}
In \Cref{Thm1}, we give a new representation of $\overline{P}_{\textup{od}}(\z;q)$ such that for special values of $\z$, we see interesting modular properties. To state our result, we define an extension of the third order mock theta function\footnote{Note that we consider $f(\z_1,\z_2;q)$ as a meromorphic function which may have poles. These poles turn out to be important in the proof of \Cref{newthm2}.} (see \cite{Choi}) %{\bf \color{blue} K: Could you please tell me how to write: for $k,\ell\in\mathbb{N}$, $f\left(q^k,q^{\ell};q\right)$ meromorphic and convergent when $|q|<1$. }
%here we record generalizations of third order mock theta functions defined by Choi \cite[p. 347]{Choi} which are as follows. %defined the generalization of third order mock theta functions as
%$f, \phi, \psi, \nu$, and $\o$.
\begin{align}\label{Choi1}
f\left(\zeta_1,\zeta_2;q\right)&:=\sum_{n\ge 0}\frac{\zeta_1^n \zeta_2^{2n}q^{n^2-3n}}{\left(-\zeta_2,-\frac{\zeta_1 \zeta_2}{q}\right)_n}.%\qquad  %\\%\label{Choi2}
%\phi\left(\zeta_1,\zeta_2;q\right):=\sum_{n\ge 0}\frac{\zeta_2^{2n}q^{n^2-2n}}{\left(-\frac{\zeta_1 \zeta_2^2}{q};q^2\right)_n},\\ %\\%\label{Choi3}
%\psi\left(\zeta_1,\zeta_2;q\right)&:=\sum_{n\ge 1}\frac{\zeta_2^{2n}q^{n^2-2n}}{\left(\frac{\zeta_1 \zeta_2^2}{q^2};q^2\right)_n},\qquad  %\\%\label{Choi4}
%\nu\left(\zeta_1,\zeta_2;q\right):=\sum_{n\ge 0}\frac{ \zeta_2^{2n}q^{n(n-1)}}{\left(-\frac{\zeta_1^2 \zeta_2^2}{q^3};q^2\right)_{n+1}},\\%\label{Choi5}
%\o\left(\zeta_1,\zeta_2;q\right):=\sum_{n\ge 0}\frac{\zeta_1^{2n} \zeta_2^{4n+4}q^{2\left(n-1\right)^2-6}}{\left(\frac{\zeta_2^2}{q},\frac{\zeta_1^2 \zeta_2^2}{q^3};q^2\right)_{n+1}}.
\end{align}
 From \cite{Choi}, for $k,\ell\in\mathbb{N}$, $f(q^k,q^{\ell};q)$ is a mock theta function in the sense of Ramanujan. %{\kbf To mention later it is not known whether $f(\z_1,\z_2,q)$ fits into the modern theory of mock theta functions, precisely, whether they are holomorphic part of Harmonic Maass forms.}  %Moreover, %similar to Watson's transformation formula for Ramanujan's third order mock theta function $f(q)$ (defined below in \eqref{Ramanujanf}), in \cite[Theorem 2]{Choi}, we see that adding the piece\footnote{This is basically a bilateral hypergeometric series which in limiting case gives Zwegers' $\mu$ function.}  $f_3(\z_1,\z_2,q)$ (for definition, see \cite[p. 347]{Choi}) with $f(\z_1,\z_2;q)$ gives a theta function. As a trivial remark, 
%for $\z_1=\z_2=q$, $f(\z_1,\z_2;q)=f(q)$ and  for $\z_2=q$, we have the bivariate generalization of $f$ due to Andrews \cite{Andmock}.

\begin{theorem}\label{Thm1}
	We have%\footnote{Note that $P_{\textup{od}}(\z;q)$ can also be represented by $\o(\z_1,\z_2,q)$ (see \cite[p. 347]{Choi}) with suitable choices for $\z_1$ and $\z_2$.}
\begin{align*}%\label{Thm1eqn2}
\overline{P}_{\textup{od}}(\z;q)\!=\frac{\left(1+q\right)\left(-q^3;q^2\right)_{\infty}}{\zeta q\left(\z q^2,q^3;q^2\right)_{\infty}}\sum_{n\ge 1}\!\frac{\left(-\frac 1q;q^2\right)_n\!\z^nq^{n(n+1)}}{\left(-\z q,-q^2;q^2\right)_n}-\frac{1+q}{\z q}f\left(\z q,q^2;q^2\right)+\frac{1+q}{\z q}.%\!\frac{\left(1+q\right)\left(-q^3;q^2\right)_{\infty}}{\zeta q\left(\z q^2,q^3;q^2\right)_{\infty}}%\\
%\times
%\Biggl
%\left(\sum_{n\ge 1}\!\frac{\left(-\frac 1q;q^2\right)_n\!\z^nq^{n(n+1)}}{\left(-\z q,-q^2;q^2\right)_n}\!-\!\frac{\left(\z q^2,q^3;q^2\right)_{\infty}}{\left(-q^3;q^2\right)_{\infty}}\left(f\left(\z q,q^2;q^2\right)\!-\!1\right)
%\sum_{n\ge 1}\frac{\z^n q^{2n^2-n}}{\left(-\z q,-q^2;q^2\right)_n}
%\right).%\Biggr),
\end{align*}	
%and
%\begin{align*}%\label{Thm1eqn2}
%P_{\textup{od}}(\z;q)\!=\frac{\left(1+q\right)\left(-q^3;q^2\right)_{\infty}}{\zeta q\left(\z q^2,q^3;q^2\right)_{\infty}}\sum_{n\ge 1}\!\frac{\left(-\frac 1q;q^2\right)_n\!\z^nq^{n(n+1)}}{\left(-\z q,-q^2;q^2\right)_n}-\frac{1+q}{\z q}f\left(\z q,q^2;q^2\right)+\frac{1+q}{\z q}.%\!\frac{\left(1+q\right)\left(-q^3;q^2\right)_{\infty}}{\zeta q\left(\z q^2,q^3;q^2\right)_{\infty}}%\\
%\times
%\Biggl
%\left(\sum_{n\ge 1}\!\frac{\left(-\frac 1q;q^2\right)_n\!\z^nq^{n(n+1)}}{\left(-\z q,-q^2;q^2\right)_n}\!-\!\frac{\left(\z q^2,q^3;q^2\right)_{\infty}}{\left(-q^3;q^2\right)_{\infty}}\left(f\left(\z q,q^2;q^2\right)\!-\!1\right)
%\sum_{n\ge 1}\frac{\z^n q^{2n^2-n}}{\left(-\z q,-q^2;q^2\right)_n}
%\right).%\Biggr),
%\end{align*}
\end{theorem}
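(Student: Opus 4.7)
The plan is to begin by reducing the double-sum expression \eqref{zgen2} to a single sum. Re-indexing $n \mapsto n+1$ in the second sum of \eqref{zgen2} shows that it equals $q$ times the first sum, so
\[
\overline{P}_{\textup{od}}(\z;q)=(1+q)\sum_{n\ge 0}\frac{\left(-q^{2n+3};q^2\right)_\infty q^{2n+1}}{\left(\z q^{2n+2}, q^{2n+3};q^2\right)_\infty}.
\]
Pulling the limits of the Pochhammer products out of the sum via $(aq^{2n};q^2)_\infty = (a;q^2)_\infty/(a;q^2)_n$ then yields
\[
\overline{P}_{\textup{od}}(\z;q)=\frac{q(1+q)\left(-q^3;q^2\right)_\infty}{\left(\z q^2, q^3;q^2\right)_\infty}\sum_{n\ge 0}\frac{\left(q^3, \z q^2;q^2\right)_n q^{2n}}{\left(-q^3;q^2\right)_n}.
\]

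The next step is to convert the single sum on the right into sums with Gaussian $q^{n^2}$-type exponents, matching the shape of the two sums on the right of the theorem. I would expand $1/(\z q^{2n+2};q^2)_\infty$ by the $q$-binomial theorem, interchange the order of summation, observe that the $k=0$ piece reduces to $1/(\z;q^2)_\infty$ and cancels against the complementary Geometric/$q$-binomial term, and then apply a Heine-type (or Rogers--Fine--type) transformation to the resulting inner sum. This produces the factor $(-1/q;q^2)_n$ and converts the linear exponent $q^{2n}$ into $q^{n(n+1)}$, while simultaneously generating an auxiliary sum that can be identified, term-by-term, with $f(\z q,q^2;q^2) - 1 = \sum_{n\ge 1}\frac{\z^n q^{2n^2-n}}{(-q^2,-\z q;q^2)_n}$ via the formula \eqref{Choi1}.

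After reassembling the Pochhammer prefactors, the constant $\frac{1+q}{\z q}$ emerges as the boundary contribution corresponding to the $n=0$ term of $f(\z q,q^2;q^2)=1+\cdots$, so that $-\frac{1+q}{\z q}(f(\z q,q^2;q^2)-1)$ appears together with the isolated summand $+\frac{1+q}{\z q}$ in the final expression, exactly as in the theorem.

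The main obstacle is pinning down the precise $q$-hypergeometric transformation. The sum $\sum_n \frac{(q^3,\z q^2;q^2)_n q^{2n}}{(-q^3;q^2)_n}$ is \emph{not} a standard ${}_r\phi_s$, since the factor $(q^2;q^2)_n$ is absent from its denominator; as a result, neither Heine's first/second transformation nor the Rogers--Fine identity applies directly. One must either recast the sum as a double sum (as above) and apply a classical transformation to the reorganized inner sum, or invoke a non-standard contiguous/iterated Heine argument of the type used in Choi's work on extended mock theta functions. The delicate bookkeeping lies in aligning the powers of $q$ and $\z$ together with the various Pochhammer prefactors so that the split into the ``theta-like'' sum and the mock theta function $f(\z q,q^2;q^2)$ appears cleanly.
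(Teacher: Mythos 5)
Your reduction of \eqref{zgen2} to the single series
\[
\overline{P}_{\textup{od}}(\z;q)=\frac{q(1+q)\left(-q^3;q^2\right)_\infty}{\left(\z q^2, q^3;q^2\right)_\infty}\sum_{n\ge 0}\frac{\left(q^3, \z q^2;q^2\right)_n q^{2n}}{\left(-q^3;q^2\right)_n}
\]
is correct and is exactly the paper's starting point. From that point on, however, your argument has a genuine gap: the entire content of the theorem is the two-term transformation of this series into the $q^{n(n+1)}$-sum plus the $q^{2n^2-n}$-sum, and you do not actually prove it. You describe a plan (expand $1/(\z q^{2n+2};q^2)_\infty$ by the $q$-binomial theorem, interchange summation, apply a Heine- or Rogers--Fine-type transformation), but you never exhibit the identity that produces the factor $\left(-\frac 1q;q^2\right)_n$ and the exponent $q^{n(n+1)}$, and you explicitly concede that ``pinning down the precise $q$-hypergeometric transformation'' is the main obstacle. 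A sketch whose key identity is left as an obstacle is not a proof.

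The missing ingredient is already in the paper's preliminaries: it is Ramanujan's identity, \Cref{lemident1}. Applying it with $q\mapsto q^2$, $a=-\z$, $b=-q$, $c=q$ gives precisely
\[
\sum_{n\ge 0}\frac{\left(\z q^2, q^3;q^2\right)_n q^{2n}}{\left(-q^3;q^2\right)_n} = \frac{1}{\z q^2}\sum_{n\ge 1}\frac{\left(-\frac 1q;q^2\right)_n \z^n q^{n(n+1)}}{\left(-\z q,-q^2;q^2\right)_n} - \frac{\left(\z q^2, q^3;q^2\right)_\infty}{\z q^2\left(-q^3;q^2\right)_\infty}\sum_{n\ge 1}\frac{\z^n q^{2n^2-n}}{\left(-\z q,-q^2;q^2\right)_n},
\]
after which one only needs to identify the last sum with $f(\z q,q^2;q^2)-1$ via \eqref{Choi1} (with $q\mapsto q^2$, $\zeta_1=\z q$, $\zeta_2=q^2$); this is what accounts for the constant $\frac{1+q}{\z q}$, as you correctly anticipated. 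So your setup and your reading of where the constant comes from are right, but the heart of the argument --- the single two-term transformation that the paper imports from \cite[equation (3.17)$_\text{R}$]{A0} --- is missing, and the alternative derivations you gesture at (a $q$-binomial double-sum rearrangement followed by Heine) are neither carried out nor obviously routine, precisely because, as you note yourself, the series is not a standard ${}_2\phi_1$.
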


%In the following remark, we have an alternative representation of $P_{\textup{od}}(\z;q)$. 

%\begin{remark*}%\label{newrem1}
%We have
%\begin{align*}%\label{Thm1eqn1}
%P_{\textup{od}}(\z;q)\!=\frac{\left(1+q\right)\left(-q^3;q^2\right)_{\infty}}{\zeta q\left(\z q^2,q^3;q^2\right)_{\infty}}\sum_{n\ge 1}\frac{\left(-\frac 1q;q^2\right)_n\z^nq^{n(n+1)}}{\left(-\z q,-q^2;q^2\right)_n}-\frac{1+q}{q^2}\omega\left(\left(\z q\right)^{\frac 12}, iq^{\frac 32};q\right).
%	P_{\textup{od}}(\z;q)\!=\!\frac{\left(1+q\right)\left(-q^3;q^2\right)_{\infty}}{\zeta q\left(\z q^2,q^3;q^2\right)_{\infty}}\left(\sum_{n\ge 1}\frac{\left(-\frac 1q;q^2\right)_n\z^nq^{n(n+1)}}{\left(-\z q,-q^2;q^2\right)_n}\!-\!\frac{\z\left(\z q^2,q^3;q^2\right)_{\infty}}{q\left(-q^3;q^2\right)_{\infty}}\o\left(\left(\z q\right)^{\frac 12}, iq^{\frac 32};q\right)%\o\left(\left(\z q\right)^{\frac 12}
%\right)%\sum_{n\ge 1}\frac{\z^n q^{2n^2-n}}{\left(-\z q,-q^2;q^2\right)_n}
%	\right),
%\end{align*}
%\end{remark*}

For the special case $\z=q$ we %of \Cref{Thm1}. To state the result, 
recall Ramanujan's third order mock theta function %(see \cite[equation (2.1.1)]{AB5}) 
\begin{equation}\label{Ramanujanf}
f(q):=\sum_{n\ge0}\frac{q^{n^2}}{\left(-q\right)^2_{n}}
\end{equation}
and a second order mock theta function by McIntosh \cite{McIntosh} %defined the second order mock theta function $\mu(q)$ as
\begin{equation}\label{McIntosh}
\mu(q):=\sum_{n\ge 0}\frac{(-1)^n \left(q;q^2\right)_n q^{n^2}}{\left(-q^2;q^2\right)^2_n}.
\end{equation}
 The following corollary shows that $\overline{P}_{\textup{od}}(q;q)$ is a mixed mock modular form.
\begin{corollary}\label{Thm1cor}
We have 
\[
\overline{P}_{\textup{od}}(q;q)=\frac{(1+q)\left(-q;q^2\right)_{\infty}}{q^2\left(q;q^2\right)^2_{\infty}}\left(-1-q+\mu(-q)\right)-\frac{1+q}{q^2}\left(f\left(q^2\right)-1\right).\]
\end{corollary}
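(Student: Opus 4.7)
The plan is to specialize Theorem \ref{Thm1} at $\z=q$ and match each of the three resulting pieces to the right-hand side of Corollary \ref{Thm1cor}. The terms $-\frac{1+q}{\z q}f(\z q,q^2;q^2)+\frac{1+q}{\z q}$ become $-\frac{1+q}{q^2}f(q^2,q^2;q^2)+\frac{1+q}{q^2}$, and plugging $\z_1=\z_2=q^2$ with base $q^2$ into \eqref{Choi1} gives directly
\[
f(q^2,q^2;q^2)=\sum_{n\ge 0}\frac{q^{2n}\cdot q^{4n}\cdot q^{2n^2-6n}}{(-q^2,-q^2;q^2)_n}=\sum_{n\ge 0}\frac{q^{2n^2}}{(-q^2;q^2)^2_n}=f(q^2),
\]
yielding the contribution $-\frac{1+q}{q^2}(f(q^2)-1)$. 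For the remaining term I would use $(q^3;q^2)_\infty=(q;q^2)_\infty/(1-q)$ and $(-q^3;q^2)_\infty=(-q;q^2)_\infty/(1+q)$ to simplify the prefactor to $\frac{(1-q)^2(-q;q^2)_\infty}{q^2(q;q^2)^2_\infty}$, reducing Corollary \ref{Thm1cor} to the $q$-series identity
\[
(1-q)^2 S=(1+q)\bigl(\mu(-q)-1-q\bigr),\qquad S:=\sum_{n\ge 1}\frac{(-q^{-1};q^2)_n\,q^{n^2+2n}}{(-q^2;q^2)^2_n}.
\]

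To prove this identity I would introduce the auxiliary sum
\[
T:=\sum_{m\ge 0}\frac{(-q;q^2)_m\,q^{m^2+2m}}{(-q^2;q^2)^2_{m+1}}.
\]
Using $(-q^{-1};q^2)_n=\frac{1+q}{q}(-q;q^2)_{n-1}$ for $n\ge 1$ and the index shift $n\mapsto m+1$ recasts $S$ as $\frac{S}{1+q}=q^2\sum_{m\ge 0}\frac{(-q;q^2)_m q^{m^2+4m}}{(-q^2;q^2)^2_{m+1}}$. Next, splitting $(-q;q^2)_n=(1+q^{2n-1})(-q;q^2)_{n-1}$ inside the expansion $\mu(-q)=\sum_{n\ge 0}\frac{(-q;q^2)_n q^{n^2}}{(-q^2;q^2)^2_n}$ (valid because $(-1)^n(-1)^{n^2}=1$) and re-indexing yields the first auxiliary relation
\[
\mu(-q)-1=qT+\frac{S}{1+q}.
\]

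The main obstacle is the second relation $(1+q)T+(2-q)S=1+q$, which I would prove by telescoping. The key step is the factorization
\[
1+(2-q)q^{2m+2}=(1+q^{2m+2})^2-q^{2m+3}(1+q^{2m+1}).
\]
Combining this with $(-q^2;q^2)^2_{m+1}=(1+q^{2m+2})^2(-q^2;q^2)^2_m$ and $(1+q^{2m+1})(-q;q^2)_m=(-q;q^2)_{m+1}$, the common-denominator form of $T+\frac{(2-q)S}{1+q}$ becomes a difference of two sums of the shape $\sum\frac{(-q;q^2)_k q^{k^2+2k}}{(-q^2;q^2)^2_k}$ that differ only in their starting index, collapsing to the lone $m=0$ term, which equals $1$. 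Finally, eliminating $T$ between the two auxiliary relations produces $\mu(-q)-1-q=\frac{(1-q)^2 S}{1+q}$, as required.
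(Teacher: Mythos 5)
Your proposal is correct, and after the common first step (specializing \Cref{Thm1} at $\z=q$, identifying $f(q^2,q^2;q^2)=f(q^2)$ via \eqref{Choi1} and \eqref{Ramanujanf}, and reducing to the identity $(1-q)^2S=(1+q)(\mu(-q)-1-q)$ with $S=\sum_{n\ge1}\frac{(-q^{-1};q^2)_nq^{n^2+2n}}{(-q^2;q^2)_n^2}$) it diverges genuinely from the paper's argument. The paper proves the equivalent identity \eqref{Thm1lem1eqn2} by invoking \Cref{BEMeqn1} twice with different parameter choices — once to transform $S$ and once to transform the series defining $\mu(-q)$ into sums of the shape $\sum_n\frac{(q^{\pm1};q^2)_n(-1)^nq^{2n}}{(-q^2;q^2)_n}$ — and then closes the gap between those two transformed sums by a telescoping argument with the explicit sequence $\a_n$ in \eqref{newdef1}. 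You instead introduce the auxiliary sum $T=\sum_{m\ge0}\frac{(-q;q^2)_mq^{m^2+2m}}{(-q^2;q^2)_{m+1}^2}$ and establish two elementary relations, $\mu(-q)-1=qT+\frac{S}{1+q}$ (by splitting $(-q;q^2)_n=(1+q^{2n-1})(-q;q^2)_{n-1}$ and re-indexing) and $(1+q)T+(2-q)S=1+q$ (via the factorization $1+(2-q)q^{2m+2}=(1+q^{2m+2})^2-q^{2m+3}(1+q^{2m+1})$, which collapses the combined sum to its $m=0$ term); eliminating $T$ gives exactly the required identity, and I have checked all the index shifts and the algebra — they are correct, and the two series being subtracted converge absolutely, so the collapse is legitimate. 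What your route buys is self-containedness: it avoids the Bhoria--Eyyunni--Maji transformation entirely and uses only termwise Pochhammer manipulations plus one telescoping trick, at the cost of having to guess the auxiliary sum $T$ and the factorization; the paper's route is more systematic in that it funnels everything through a single known lemma, which also gets reused elsewhere in the paper.
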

 Next, we let $\z\to q^{-2}$ in \Cref{Thm1}. To state the result, we consider overpartitions of $n$ with the following additional restrictions:
\begin{enumerate}
	\item The smallest part is even and even parts are non-overlined.
	\item The difference between odd parts (if they appear) and the smallest even part is at least three.
\end{enumerate} 
%\begin{enumerate}
%	\item the smallest part is even, difference between odd parts and smallest even part $\ge 3$.
%\end{enumerate}
Let $\overline{p}^{[1]}_{\text{od}}(n)$ denote the number of such overpartitions of $n$. For example, $\overline{p}^{[1]}_{\text{od}}(9)=4$, since the admissible partitions are $
7+2,\ \overline{7}+2,\ 5+2+2,\ \overline{5}+2+2$.
We define
%\[
%\overline{P}^{[1]}_{\rm od}(q):=\sum_{n\ge 0}\overline{p}^{[1]}_{\text{od}}(n)q^n,
%\]
%and note that 
\begin{equation}\label{genfuncnew}
\overline{P}^{[1]}_{\rm od}(q):=\sum_{n\ge 2}\overline{p}^{[1]}_{\text{od}}(n)q^n=\sum_{n\ge1}\frac{\left(-q^{2n+3};q^2\right)_\infty q^{2n}}{\left(q^{2n},q^{2n+3};q^2\right)_\infty}.
\end{equation}
Then $\overline{P}^{[1]}_{\rm od}$ is a linear combination of a mixed mock Maass theta function and a false theta function. 

\begin{theorem}\label{newthm1}
	We have
	\[
	\overline{P}^{[1]}_{\rm od}(q)=\frac{q\left(-q^3;q^2\right)_{\infty}}{\left(q\right)_{\infty}}W_1(q)+\frac{\left(1-2q-q^2\right)\left(-q^3;q^2\right)_{\infty}}{\left(1+q\right)\left(q\right)_{\infty}}+\frac{q}{1+q}\sum_{n\in\mathbb{Z}}\textnormal{sgn}(n)q^{\frac{n(3n-1)}{2}}-\frac{1}{1+q}.	
	\]
\end{theorem}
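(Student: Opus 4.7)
The plan is to specialize $\zeta = q^{-2}$ in \Cref{Thm1}. Both sides of that identity develop a simple pole at $\zeta = q^{-2}$, so we interpret the substitution by matching first the polar parts (a consistency check) and then the constant terms of the Laurent expansions at $\zeta = q^{-2}$, the latter producing the desired identity for $\overline{P}^{[1]}_{\rm od}(q)$.

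To identify the pole on each side, we use \eqref{zgen2}: the LHS singular piece comes from the $n=0$ term of the first sum and the $n=1$ term of the second sum, which together equal $\frac{q(1+q)(-q^3;q^2)_\infty}{(\zeta q^2, q^3;q^2)_\infty}$, while the remaining terms are regular at $\zeta = q^{-2}$ and limit to $q(1+q)\overline{P}^{[1]}_{\rm od}(q)$. On the RHS, only the first term has a pole (from the factor $(\zeta q^2;q^2)_\infty^{-1}$), which we isolate via $(\zeta q^2;q^2)_\infty = (1-\zeta q^2)(\zeta q^4;q^2)_\infty$. Matching residues reduces, after using $(-1/q;q^2)_n = (-q^{-1};q^2)_n$, to the auxiliary identity
\[
\sum_{n\ge 0}\frac{q^{n(n-1)}}{(-q^2;q^2)_n} = 2,
\]
which we prove by induction on the partial sums $\sum_{n=0}^N q^{n(n-1)}/(-q^2;q^2)_n = 2 - q^{N(N+1)}/(-q^2;q^2)_N$ and letting $N\to\infty$.

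With residues matched, we equate constant terms via L'H\^opital's rule. A pleasant cancellation occurs: the Lambert-series contributions coming from $\partial_\zeta \log(\zeta q^4;q^2)_\infty^{-1}$ appear identically on both sides and drop out. What survives is
\[
q(1+q)\overline{P}^{[1]}_{\rm od}(q) = q(1+q)\bigl(1 - f(q^{-1},q^2;q^2)\bigr) + \frac{q(1-q^2)(-q^3;q^2)_\infty}{(q)_\infty}\left(1 - \frac{S_\zeta(q^{-2},q)}{q^2}\right),
\]
where $S(\zeta,q)$ denotes the finite series appearing in Term 1 of \Cref{Thm1} and $S_\zeta = \partial_\zeta S$.

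The main obstacle is then to express $f(q^{-1},q^2;q^2)$ and $S_\zeta(q^{-2},q)$ explicitly in terms of $W_1(q)$ and the false theta series $\sum_{n\in\mathbb{Z}} \textnormal{sgn}(n) q^{n(3n-1)/2}$. Using the reduction $(-q^{-1};q^2)_n = q^{-1}(1+q)(-q;q^2)_{n-1}$ for $n\ge 1$, together with $(-q^2;q^2)_n(-q;q^2)_{n-1} = (-q;q)_{2n}/(1+q^{2n-1})$, both series rewrite as linear combinations of sums of the shape $\sum_{n\ge 0} q^{\mathrm{quadratic}(n)}/(-q;q)_{2n}$ weighted by polynomials in $q^n$. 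Applying the $q$-hypergeometric and Bailey-pair transformations used to prove \Cref{thm1} converts these into the desired combination of $W_1(q)$, the false theta, and rational terms. Substituting back, dividing by $q(1+q)$, and using $(-q;q^2)_\infty = (1+q)(-q^3;q^2)_\infty$ to match constants yields the claimed formula for $\overline{P}^{[1]}_{\rm od}(q)$.
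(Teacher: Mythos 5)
Your overall strategy is the same as the paper's: specialize $\zeta=q^{-2}$ in \Cref{Thm1}, isolate the simple pole coming from the $n=0$ term of the first sum and the $n=1$ term of the second sum in \eqref{zgen2} (resp.\ from $\left(\zeta q^2;q^2\right)_\infty^{-1}$ on the other side), verify that the residues match -- your identity $\sum_{n\ge 0}q^{n(n-1)}/\left(-q^2;q^2\right)_n=2$ is exactly \Cref{kang} with $q\mapsto q^2$ -- and extract the constant term by L'H\^opital. Up to and including your displayed intermediate formula this agrees with the paper's \eqref{special case Theorem 7.1}, and your observation that the Lambert-series contribution from $\partial_\zeta\left(\zeta q^4;q^2\right)_\infty^{-1}$ drops out is correct, since it multiplies a factor that vanishes at $\zeta=q^{-2}$.

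The gap is your final paragraph, which is where essentially all of the work of the theorem lies. Differentiating $\zeta^n/\left(-\zeta q;q^2\right)_n$ produces, besides the sum $\sum_{n\ge1}nq^{n(n-1)}/\left(-q^2;q^2\right)_n$ (which is $\sigma\left(q^2\right)$ by \Cref{Donato}), the genuinely two-dimensional sum $\sum_{n\ge1}\frac{q^{n(n-1)}}{\left(-q^2;q^2\right)_n}\sum_{k=1}^n\frac{q^{2k-1}}{1+q^{2k-3}}$. This is not of the shape $\sum_n q^{\mathrm{quadratic}(n)}/\left(-q\right)_{2n}$ weighted by a polynomial in $q^n$, and no transformation from the proof of \Cref{thm1} applies to it; the paper handles it by interchanging the order of summation, collapsing the inner sum to $1+q^{2k+2}$ via a limiting case of Heine's transformation (\Cref{Heine}), and then telescoping with \Cref{kang}, which produces a $\sigma\left(q^2\right)$ that cancels the one above plus a sum that \eqref{above3.6} converts into $W_1(q)$. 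Likewise, your reduction of $f\left(q^{-1},q^2;q^2\right)$ leads to $\sum_{n\ge1}q^{2n^2-3n+1}\left(1+q^{2n-1}\right)/\left(-q\right)_{2n}$, and relating this to the false theta series is the content of \Cref{claim}, whose proof needs \Cref{Lostntbk} and the Rogers--Fine identity (\Cref{RF}) -- neither of which occurs in the proof of \Cref{thm1}. So the step ``apply the transformations used to prove \Cref{thm1}'' does not close the argument: you would need to supply the analogues of \Cref{Donato}, \Cref{Heine}, and \Cref{claim} explicitly, and these are the substantive ingredients of the proof.
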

We also study a two-variable generalization of $\overline{P}_{\text{ev}}(q)$, denoted by $\overline{P}_{\textup{ev}}(\z;q)$. Let $\overline{p}_{\text{ev}}(m,n)$ count the number of restricted overpartitions of $n$ enumerated by $\overline{p}_{\textup{ev}}(n)$ such that $m$ counts the odd parts plus the non-overlined even parts.
%\begin{enumerate}
%	\item the sum of total number of odd parts and the non-overlined even parts is $m$.
%\end{enumerate} 
For example, taking $n=6$, we have
\vspace{0.3 cm}
\begin{center}
	{\renewcommand{\arraystretch}{1.3}%
		\begin{tabular}{|c|c|c|c|c|c|c|c|c|}
			\hline
			$\lambda$ 
			& $6$ 
			& $5+1$ 
			& $4+2$
			& $\overline{4}+2$ 
			& $3+2+1$ 
			& $3+\overline{2}+1$
			& $2+2+2$ 
			& $\overline{2}+2+2$\\
			\hline
			$m$ 
			& $1$ 
			& $2$ 
			& $2$ 
			& $1$ 
			& $3$ 
			& $2$ 
			& $3$ 
			& $2$\\
			\hline
		\end{tabular}
	}
\end{center}
\vspace{0.3 cm}
Therefore $\overline{p}_{\textup{ev}}(1,6)=2$, $\overline{p}_{\textup{ev}}(2,6)=4$, and $\overline{p}_{\textup{ev}}(3,6)=2$. We have %The generating function of $\overline{p}_{\mathrm{ev}}(m,n)$ is given by %{\bf KB: Not matching with \eqref{thm2eqn0} for $\z=1$. \color{blue} K: Here I explained how they matches:
%For $\z=1$ in \eqref{zgen3}, we have
%\begin{align*}
%\overline{P}_{\textup{ev}}(1;q)&=\sum_{n\ge 0}\frac{\left(-q^{2n+2},-q^{2n+3};q^2\right)_{\infty} q^{2n+1}}{\left(q^{2n+2};q^2\right)_{\infty}}+\sum_{n\ge 1}\frac{\left(-q^{2n},- q^{2n+1};q^2\right)_{\infty} q^{2n}}{\left( q^{2n};q^2\right)_{\infty}}\\
%&=\sum_{n\ge 0}\frac{\left(-q^{2n+2},-q^{2n+3};q^2\right)_{\infty} q^{2n+1}}{\left(q^{2n+2};q^2\right)_{\infty}}+\sum_{n\ge 1}\frac{\left(-q^{2n+2},- q^{2n+1};q^2\right)_{\infty}\left(1+q^{2n}\right) q^{2n}}{\left(1-q^{2n}\right)\left( q^{2n+2};q^2\right)_{\infty}},
%\end{align*}
%which is exactly the expression in \eqref{zgen3}. \color{black} KB: This is confusing though. \color{blue} K: Now it exactly matches with \eqref{thm2eqn0}.
%}
\begin{align}\label{zgen3}
\overline{P}_{\textup{ev}}(\z;q)&:=\sum_{m,n\ge 0}\overline{p}_{\textup{ev}}(m,n)\z^m q^n\nonumber\\
&=\sum_{n\ge 0}\frac{\left(-q^{2n+2},-\z q^{2n+3};q^2\right)_{\infty}\z q^{2n+1}}{\left(\z q^{2n+2};q^2\right)_{\infty}}+\sum_{n\ge 1}\frac{\left(-q^{2n},-\z q^{2n+1};q^2\right)_{\infty}\z q^{2n}}{\left(\z q^{2n};q^2\right)_{\infty}}.
\end{align}
We recall another (see \cite[p. 347]{Choi}) generalized\footnote{From \cite[p. 347]{Choi}, we know that for $k,\ell\in\mathbb{N}$, $\nu(q^{k},q^{\ell};q)$ is a mock theta function in the sense of Ramanujan.} third order mock theta function
\begin{align}\label{Choi2}
%f\left(\zeta_1,\zeta_2;q\right)&:=\sum_{n\ge 0}\frac{\zeta_1^n \zeta_2^{2n}q^{n^2-3n}}{\left(-\zeta_2,-\frac{\zeta_1 \zeta_2}{q}\right)_n},\qquad  %\\%\label{Choi2}
%\phi\left(\zeta_1,\zeta_2;q\right)&:=\sum_{n\ge 0}\frac{\zeta_2^{2n}q^{n^2-2n}}{\left(-\frac{\zeta_1 \zeta_2^2}{q};q^2\right)_n},\qquad %\\%\label{Choi3}
%\psi\left(\zeta_1,\zeta_2;q\right):=\sum_{n\ge 1}\frac{\zeta_2^{2n}q^{n^2-2n}}{\left(\frac{\zeta_1 \zeta_2^2}{q^2};q^2\right)_n},\nonumber\\ %\\%\label{Choi4}
\nu\left(\zeta_1,\zeta_2;q\right)&:=\sum_{n\ge 0}\frac{ \zeta_2^{2n}q^{n(n-1)}}{\left(-\frac{\zeta_1^2 \zeta_2^2}{q^3};q^2\right)_{n+1}}.%\label{Choi5}
%\o\left(\zeta_1,\zeta_2;q\right):=\sum_{n\ge 0}\frac{\zeta_1^{2n} \zeta_2^{4n+4}q^{2\left(n-1\right)^2-6}}{\left(\frac{\zeta_2^2}{q},\frac{\zeta_1^2 \zeta_2^2}{q^3};q^2\right)_{n+1}}.
\end{align} 
\noindent We decompose $\overline{P}_{\textup{ev}}(\z;q)$ into three pieces: one comes from $\nu$, another is essentially a modular form, and the third one has modular properties for special values of $\z$.
\begin{theorem}\label{thm4}
	We have%\footnote{Here also $P_{\textup{ev}}(\z;q)$ has two alternative representations in terms of $\phi(\zeta_1,\zeta_2;q)$ and $\psi(\zeta_1,\zeta_2;q)$ (for definition, see \cite[p. 347]{Choi}).}
	\begin{align*}
	\overline{P}_{\textup{ev}}(\z;q)=\frac{\z q\left(q^2,\z q^3,\frac{1}{\z q};q^2\right)_{\infty}}{\left(-q^3,-\z q^2;q^2\right)_{\infty}}&+\frac{\left(1+q\right)\left(-q^2,-\z q;q^2\right)_{\infty}}{\left(\z q^2;q^2\right)_{\infty}}\sum_{n\ge 0}\frac{\left(-\frac{1}{\z};q^2\right)_n(-1)^n q^{n}}{\left(-q^2;q^2\right)_n}\\
	&\hspace{1.3 cm}+\left(1+q\right)\left(\left(1+\z\right)\nu\left(iq,i\left(\z q\right)^{\frac 12};q\right)-1\right).%\sum_{n\ge 1}\frac{(-1)^n \z^n q^{n^2}}{\left(-\z q^2;q^2\right)_n}.
	\end{align*}
\end{theorem}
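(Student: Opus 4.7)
The plan is to reduce the two-sum definition \eqref{zgen3} of $\overline{P}_{\textup{ev}}(\z;q)$ to a single basic hypergeometric series in base $q^2$, and then split that series, via Heine-type transformations, into the three pieces on the right-hand side. Structurally the proof should parallel that of \Cref{Thm1}: a modular theta quotient, an infinite product times a basic hypergeometric series, and a piece recognisable as a specialization of Choi's $\nu$ from \eqref{Choi2}.

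First I would combine the two sums in \eqref{zgen3}. Pulling off the $j=0$ factors of the shifted Pochhammer symbols in the second sum replaces its prefactor $(-q^{2n},-\z q^{2n+1};q^2)_\infty/(\z q^{2n};q^2)_\infty$ by $\frac{(1+q^{2n})(1+\z q^{2n+1})}{1-\z q^{2n}}$ times the same skeleton $(-q^{2n+2},-\z q^{2n+3};q^2)_\infty/(\z q^{2n+2};q^2)_\infty$ that appears in the first sum. The elementary identity $q(1-\z q^{2n})+(1+q^{2n})(1+\z q^{2n+1})=(1+q)+q^{2n}(1+\z q^{2n+1})$ then bundles the two summands at each $n\ge 1$ into
\[
\z q^{2n}\bigl[(1+q)+q^{2n}(1+\z q^{2n+1})\bigr]\cdot\frac{(-q^{2n+2},-\z q^{2n+3};q^2)_\infty}{(\z q^{2n};q^2)_\infty},
\]
with the $n=0$ term handled as a boundary contribution. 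Using the standard identities $(-q^{2n+2};q^2)_\infty=(-q^2;q^2)_\infty/(-q^2;q^2)_n$, $(-\z q^{2n+3};q^2)_\infty=(-\z q^3;q^2)_\infty/(-\z q^3;q^2)_n$, and $1/(\z q^{2n};q^2)_\infty=(\z q^2;q^2)_{n-1}/(\z q^2;q^2)_\infty$, the whole generating function becomes a constant prefactor times a single basic hypergeometric series in $n$.

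Next I would split this single series according to the two terms $(1+q)$ and $q^{2n}(1+\z q^{2n+1})$ in the bracket. Applying a Heine $(q^2)$-transformation to the $(1+q)$-branch, with parameters chosen so that $-1/\z$ enters the upper Pochhammer symbol, should produce exactly $(1+q)\frac{(-q^2,-\z q;q^2)_\infty}{(\z q^2;q^2)_\infty}\sum_{n\ge 0}\frac{(-1/\z;q^2)_n(-1)^nq^n}{(-q^2;q^2)_n}$, with the Heine ``boundary'' term collapsing, via the Jacobi triple product in base $q^2$ (with $x=\z q^3$ so that $(q^2,\z q^3,1/(\z q);q^2)_\infty=\sum_{n\in\mathbb{Z}}(-1)^n\z^nq^{n^2+2n}$), into the theta quotient $\frac{\z q(q^2,\z q^3,1/(\z q);q^2)_\infty}{(-q^3,-\z q^2;q^2)_\infty}$. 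For the $q^{2n}(1+\z q^{2n+1})$-branch, one more transformation should collapse most of the Pochhammer symbols, leaving $(1+q)(1+\z)\sum_{n\ge 0}\frac{(-1)^n\z^nq^{n^2}}{(-\z q^2;q^2)_n}$. Unpacking \eqref{Choi2} at $(\zeta_1,\zeta_2)=(iq,i(\z q)^{1/2})$ gives $\nu(iq,i(\z q)^{1/2};q)=\sum_{n\ge 0}\frac{(-1)^n\z^nq^{n^2}}{(-\z;q^2)_{n+1}}$, and using $(-\z;q^2)_{n+1}=(1+\z)(-\z q^2;q^2)_n$ identifies this branch with $(1+q)(1+\z)\nu(iq,i(\z q)^{1/2};q)$, modulo a constant that cancels the $-(1+q)$ in the theorem.

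The main obstacle I foresee is identifying the correct Heine-type transformation that simultaneously yields the precise factor $(-1/\z;q^2)_n$ in the second piece, the specific theta quotient with numerator $(q^2,\z q^3,1/(\z q);q^2)_\infty$ and its $\z q$ prefactor, and the specialization $(\zeta_1,\zeta_2)=(iq,i(\z q)^{1/2})$ of $\nu$. Because both $\nu(iq,i(\z q)^{1/2};q)$ and the factor $(1/(\z q);q^2)_\infty$ have poles, respectively zeros, as functions of $\z$, the identity must be read as an equality of meromorphic functions in $\z$, and the residue cancellations between the first and third pieces, together with the constant $-(1+q)$, require careful bookkeeping analogous to that used in \Cref{Thm1} for the function $f(\z_1,\z_2;q)$.
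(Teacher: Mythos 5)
Your endgame bookkeeping is correct: $\nu\bigl(iq,i(\z q)^{\frac12};q\bigr)=\sum_{n\ge0}\frac{(-1)^n\z^nq^{n^2}}{(-\z;q^2)_{n+1}}$, the factorization $(-\z;q^2)_{n+1}=(1+\z)(-\z q^2;q^2)_n$, and the triple-product evaluation of $\bigl(q^2,\z q^3,\frac{1}{\z q};q^2\bigr)_{\infty}$ all check out, and reducing \eqref{zgen3} to a single base-$q^2$ series is indeed the right opening move (though the clean way is simply to shift $n\mapsto n+1$ in the second sum, which aligns it term-by-term with the first and extracts a global factor $\z q(1+q)$, yielding $\overline{P}_{\textup{ev}}(\z;q)=\frac{\z q(1+q)(-q^2,-\z q^3;q^2)_{\infty}}{(\z q^2;q^2)_{\infty}}\sum_{n\ge0}\frac{(\z q^2;q^2)_nq^{2n}}{(-q^2,-\z q^3;q^2)_n}$; your peeling of the $j=0$ factors instead leaves the awkward bracket $(1+q)+q^{2n}(1+\z q^{2n+1})$ inside the sum).

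The genuine gap is the middle of the argument, which you assert rather than carry out, and the mechanism you name is of the wrong type. Heine transformations are two-term relations (series $=$ product $\times$ series); they have no additive ``boundary term'' and cannot by themselves produce the three-term structure (theta quotient \emph{plus} product $\times$ series) that the second and first pieces of the theorem jointly require. What is actually needed is a chain of three-term identities: the paper first applies \Cref{lemident2} (with $q\mapsto q^2$, $a=1$, $b=\z q$, $c=\z$) to split the single series into the $\nu$-piece $\sum_{n\ge1}\frac{(-1)^n\z^nq^{n^2}}{(-\z q^2;q^2)_n}$ and a residual sum $\sum_{n\ge1}\frac{(\z q^2;q^2)_{n-1}(-1)^n\z^nq^{n^2}}{(-\z q,-\z q^2;q^2)_n}$; it then converts that residual, via \Cref{BEMeqn} together with a limit $a\to0$, into $-\frac{\z q}{1+\z q}\sum_{n\ge0}\frac{(-q;q^2)_n(-1)^n\z^nq^{2n}}{(-\z q^3;q^2)_n}$; and only then does \Cref{Agarwal} (Agarwal's three-term identity) split this last series into the theta quotient and the $\bigl(-\frac1\z;q^2\bigr)_n$ series simultaneously. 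None of these steps is a Heine transformation, and your alternative split of the bracket into the $(1+q)$- and $q^{2n}(1+\z q^{2n+1})$-branches is not shown (and is unlikely) to line up with the decomposition in the statement. As it stands, the proposal identifies the destination but not the road: the transformations that constitute the proof are exactly the part you defer to ``the main obstacle.''
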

\noindent Next, recall Ramanujan's third order mock theta function %\cite[equation (2.1.6)]{AB5} 
\begin{equation}\label{ramanujannu}
\nu(q):=\sum_{n\geq0}\frac{q^{n(n+1)}}{(-q;q^2)_{n+1}}.
\end{equation}
%Using \Cref{thm4} with $\zeta=-q$, 
The following corollary shows that $\overline{P}_{\textup{ev}}(-q;q)$ is a mixed mock modular form.
\begin{corollary}\label{cor7.4}
	We have
	\begin{align*}
	\overline{P}_{\textup{ev}}(-q;q)=-1-q-\frac{2\left(-q^2;q^4\right)_{\infty}\left(q^8;q^8\right)_{\infty}}{(q^6;q^4)_{\infty}}+\frac{2\left(q^4;q^4\right)_{\infty}}{(-q^3;q^2)_{\infty}}+\left(1-q^2\right)\nu(-q).
	\end{align*}
	%where $\nu(q):=\sum_{n\geq0}\frac{q^{n^2}}{(-q;q^2)_{n+1}}$ \cite[equation (2.1.6)]{AB5} is one of Ramanujan's third order mock theta functions.
\end{corollary}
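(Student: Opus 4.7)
The plan is to set $\zeta=-q$ in \Cref{thm4} and identify each of the three summands with one of the nontrivial terms on the right-hand side of \Cref{cor7.4}. Under this substitution, $\zeta q=-q^{2}$, $\zeta q^{2}=-q^{3}$, $\zeta q^{3}=-q^{4}$, $1/(\zeta q)=-q^{-2}$, and $1+\zeta=1-q$, so each of the three pieces becomes a function of $q$ alone.

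The main obstacle is the middle piece, whose inner series reduces at $\zeta=-q$ to
\[
S:=\sum_{n\ge 0}\frac{(q^{-1};q^{2})_{n}(-q)^{n}}{(-q^{2};q^{2})_{n}}.
\]
I would insert the trivial factor $(q^{2};q^{2})_{n}/(q^{2};q^{2})_{n}$ into the summand to recognise $S$ as the basic hypergeometric series ${}_{2}\phi_{1}(q^{-1},q^{2};-q^{2};q^{2},-q)$ in base $q^{2}$; since the argument $-q$ equals $c/(ab)$ with $a=q^{-1}$, $b=q^{2}$, $c=-q^{2}$, the $q$-Gauss summation gives
\[
S=\frac{(-q^{3};q^{2})_{\infty}(-1;q^{2})_{\infty}}{(-q^{2};q^{2})_{\infty}(-q;q^{2})_{\infty}}=\frac{2}{1+q},
\]
after using $(-1;q^{2})_{\infty}=2(-q^{2};q^{2})_{\infty}$ and $(-q;q^{2})_{\infty}=(1+q)(-q^{3};q^{2})_{\infty}$. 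The resulting $2/(1+q)$ cancels the prefactor $(1+q)$, and $(-q^{2};q^{2})_{\infty}(q^{2};q^{2})_{\infty}=(q^{4};q^{4})_{\infty}$ then reduces the middle piece to $\tfrac{2(q^{4};q^{4})_{\infty}}{(-q^{3};q^{2})_{\infty}}$.

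For the first (theta-like) piece, I would apply $(a;q^{2})_{\infty}(-a;q^{2})_{\infty}=(a^{2};q^{4})_{\infty}$ to $(q^{3},-q^{3};q^{2})_{\infty}=(q^{6};q^{4})_{\infty}$, then use $(-q^{-2};q^{2})_{\infty}=2(1+q^{-2})(-q^{2};q^{2})_{\infty}$ and telescope via $(q^{2};q^{2})_{\infty}(-q^{2};q^{2})_{\infty}=(q^{4};q^{4})_{\infty}$, $(-q^{2};q^{2})_{\infty}=(-q^{2};q^{4})_{\infty}(-q^{4};q^{4})_{\infty}$, and $(q^{4};q^{4})_{\infty}(-q^{4};q^{4})_{\infty}=(q^{8};q^{8})_{\infty}$ to arrive at $-\tfrac{2(-q^{2};q^{4})_{\infty}(q^{8};q^{8})_{\infty}}{(q^{6};q^{4})_{\infty}}$. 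Finally, since $\nu(\zeta_{1},\zeta_{2};q)$ in \eqref{Choi2} depends on $\zeta_{2}$ only through $\zeta_{2}^{2}$, substituting $\zeta_{1}^{2}=-q^{2}$ and $\zeta_{2}^{2}=-\zeta q=q^{2}$ yields $-\zeta_{1}^{2}\zeta_{2}^{2}/q^{3}=q$ and $\zeta_{2}^{2n}q^{n(n-1)}=q^{n(n+1)}$, so the series collapses to $\sum_{n\ge 0}q^{n(n+1)}/(q;q^{2})_{n+1}$, which is $\nu(-q)$ by \eqref{ramanujannu} (since $n(n+1)$ is always even). Combined with $1+\zeta=1-q$, the third piece becomes $(1+q)((1-q)\nu(-q)-1)=(1-q^{2})\nu(-q)-1-q$, supplying both the $(1-q^{2})\nu(-q)$ term and the constant $-1-q$ in the claim, and the three contributions together give the asserted identity.
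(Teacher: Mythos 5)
Your proposal is correct and follows essentially the same route as the paper: substitute $\zeta=-q$ into Theorem \ref{thm4}, simplify the infinite products, and evaluate the inner series to $\tfrac{2}{1+q}$. The only difference is that you sum that series via the $q$-Gauss formula (Lemma \ref{Heine}), whereas the paper applies the Rogers--Fine identity (Lemma \ref{RF}) with $\alpha=\tfrac1q$, $\beta=-q^2$, $t=-q$, using $(1;q^2)_n=0$ for $n\ge1$; both are immediate and give the same value.
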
 
Moreover, $\overline{P}_{\textup{ev}}(-1;q)$ is basically a modular form. %{\bf KB: Should we maybe say what this means for the combinatorics? \color{blue} K: As we have discussed, the combinatorial description would not be simple and nice as we again have to $q\mapsto -q$ to relate to partitions into distinct odd parts.}%{\bf \color{blue} K: corrected the following corollary and proof also modified.}
\begin{corollary}\label{newthm2}
We have
\[
\overline{P}_{\textup{ev}}(-1;q)=\left(1+q\right)\left(\left(q;q^2\right)_{\infty}-1\right).
\]	
\end{corollary}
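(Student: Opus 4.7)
The plan is to deduce \Cref{newthm2} directly from \Cref{thm4} by specializing $\z=-1$, with careful attention to the apparent $\tfrac{0}{0}$ in the $\nu$-term. I would examine each of the three summands on the right-hand side of \Cref{thm4} separately as $\z\to -1$.

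First I would handle the first term. Writing $(\tfrac{1}{\z q};q^2)_{\infty}|_{\z=-1}=(-\tfrac{1}{q};q^2)_\infty=(1+\tfrac{1}{q})(-q;q^2)_\infty=\tfrac{1+q}{q}(-q;q^2)_\infty$, the factors $(q^2;q^2)_{\infty}$ and $(-q^3;q^2)_{\infty}$ cancel between numerator and denominator, and one obtains $-(1+q)(-q;q^2)_\infty$. Next, the second term simplifies immediately because $(-\tfrac{1}{\z};q^2)_n|_{\z=-1}=(1;q^2)_n$ vanishes for all $n\ge 1$, so the infinite sum collapses to its $n=0$ term, which equals $1$; the remaining product is $(1+q)(q;q^2)_\infty$ after canceling $(-q^2;q^2)_\infty$.

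The main (and only nontrivial) obstacle is the third term $(1+q)((1+\z)\nu(iq,i(\z q)^{\frac 12};q)-1)$, since $\nu(iq,i(\z q)^{\frac 12};q)$ has a simple pole at $\z=-1$ from the $j=0$ factor in $(-\tfrac{\z_1^2\z_2^2}{q^3};q^2)_{n+1}=(-\z;q^2)_{n+1}=\prod_{j=0}^{n}(1+\z q^{2j})$. The factor $(1+\z)$ out front cancels this pole term-by-term: after cancellation, the remaining denominator becomes $\prod_{j=1}^{n}(1+\z q^{2j})$, which at $\z=-1$ equals $(q^2;q^2)_n$. Combining with $\z_2^{2n}=(-\z q)^n$ and the factor $q^{n(n-1)}$, the limit of $(1+\z)\nu(iq,i(\z q)^{\frac 12};q)$ as $\z\to -1$ equals
\[
\sum_{n\ge 0}\frac{q^{n^2}}{(q^2;q^2)_n}=(-q;q^2)_\infty,
\]
where the last equality is Euler's identity $\sum_{n\ge 0}\frac{z^n q^{n^2}}{(q^2;q^2)_n}=(-zq;q^2)_\infty$ with $z=1$ (obtained from $\sum_{n\ge 0}\frac{z^n Q^{n(n-1)/2}}{(Q;Q)_n}=(-z;Q)_\infty$ by setting $Q=q^2$ and absorbing $q^n$ into $z$).

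Finally I would add the three limits: $-(1+q)(-q;q^2)_\infty+(1+q)(q;q^2)_\infty+(1+q)\bigl((-q;q^2)_\infty-1\bigr)=(1+q)\bigl((q;q^2)_\infty-1\bigr)$, where the $\pm (1+q)(-q;q^2)_\infty$ contributions cancel exactly, yielding the claimed identity. No serious obstacle beyond the residue computation arises; the key point is recognizing that the pole of $\nu$ and the zero of $1+\z$ conspire to produce a clean theta-type partial sum that Euler's identity evaluates in closed form.
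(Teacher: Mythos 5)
Your proof is correct and follows essentially the same route as the paper: both specialize \Cref{thm4} at $\z=-1$, observe that the prefactor $(1+\z)$ cancels the $j=0$ factor of $(-\z;q^2)_{n+1}$ in $\nu$ so that the third term reduces to $\sum_{n\ge0}q^{n^2}/(q^2;q^2)_n=(-q;q^2)_\infty$ via Euler's identity (\Cref{Euler}), and then combine the three terms, with the $(1+q)(-q;q^2)_\infty$ contributions cancelling. The only cosmetic difference is that you phrase the $\nu$-term as a limit/residue computation, whereas the paper records the cancellation as the identity $(1+\z)\nu(iq,i(\z q)^{1/2};q)=\sum_{n\ge0}(-1)^n\z^nq^{n^2}/(-\z q^2;q^2)_n$ before setting $\z=-1$.
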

%{\bf \color{blue} K: here added our main result from the extra file. Added one sentence to introduce our result.} 
%To state our next result, we recall Ramanujan's theta function $\varphi(q)$ (see \cite[equation (2.3.1)]{AB5}) {\bf KB: I meant in our paper. To discuss.\kbf To decide which representation we keep $\varphi(-q)$ or eta-quotient to maintain consistency with main file (on page 2)}, namely
%\begin{equation}\label{Ramtheta}
%\varphi(-q)=\sum_{n\in \mathbb{Z}}(-1)^n q^{n^2}=\frac{(q)_{\infty}}{\left(-q\right)_{\infty}}.
%\end{equation} 
%{\bf AD: added the following.}

Letting $\z\to q^{-2}$ in \Cref{thm4}, we obtain the following identity of Ramanujan \cite{AB5}. %relating two third order mock theta functions %$\phi(q)$ and $f(q)$, as a special case of our Theorem \ref{thm4}.

\begin{corollary}\label{RamanujanIdentity}
	%Letting $\z\to q^{-2}$ in \Cref{thm4} of the paper, 
	We have
	\[
	2\phi(-q)-f(q)=\frac{\Theta^2(-q)}{\left(q\right)_{\infty}}.
	\]
\end{corollary}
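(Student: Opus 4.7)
The plan is to let $\zeta \to q^{-2}$ in \Cref{thm4}, viewed as an identity of meromorphic functions of $\zeta$. Denote the three summands on the right-hand side by $B(\zeta)$, $C(\zeta)$, $D(\zeta)$ respectively; $B$ and $D$ remain regular at $\zeta = q^{-2}$, while both $\overline{P}_{\textup{ev}}(\zeta;q)$ and $C(\zeta)$ develop simple poles from the vanishing factor $(\zeta q^2;q^2)_\infty$.

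For the regular terms, direct substitution handles $B$: using $(-1;q^2)_\infty = 2(-q^2;q^2)_\infty$, $(-q^3;q^2)_\infty(1+q) = (-q;q^2)_\infty$, $(-q;q^2)_\infty(-q^2;q^2)_\infty = 1/(q;q^2)_\infty$, and the product expansion $\Theta(-q) = (q;q^2)_\infty^2(q^2;q^2)_\infty$, one gets $B(q^{-2}) = \frac{(1+q)\Theta^2(-q)}{2q(q)_\infty}$. For $D$, the key is the rewrite
\[
(1+\zeta)\nu(iq, i(\zeta q)^{1/2}; q) = \sum_{n\ge 0}\frac{(-\zeta)^n q^{n^2}}{(-\zeta q^2; q^2)_n},
\]
which follows from $(-\zeta;q^2)_{n+1} = (1+\zeta)(-\zeta q^2;q^2)_n$ and $\zeta_2^{2n}q^{n(n-1)} = (-\zeta)^n q^{n^2}$. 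At $\zeta = q^{-2}$ the $n \ge 1$ terms pick up $(-1;q^2)_n = 2(-q^2;q^2)_{n-1}$; after the reindexing $n \mapsto n+1$ the tail collapses to $-\phi(-q)/(2q)$, so $D(q^{-2}) = -\frac{(1+q)\phi(-q)}{2q}$.

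Next, I match the singularities. At $\zeta = q^{-2}$ the inner sum in $C$ becomes $\sum_{n\ge 0}(-q)^n = 1/(1+q)$ because $(-1/\zeta;q^2)_n = (-q^2;q^2)_n$. On the LHS, the pole is sourced by the $n = 0$ term of the first sum and the $n = 1$ term of the second sum in \eqref{zgen3}, which combine as $\frac{(1+q)\zeta q(-q^2,-\zeta q^3;q^2)_\infty}{(\zeta q^2;q^2)_\infty}$; a short residue computation shows the residues of the LHS and of $C$ agree identically. Hence $\overline{P}_{\textup{ev}}(\zeta;q) - C(\zeta) = B(\zeta) + D(\zeta)$ is regular at $\zeta = q^{-2}$, and the limit of the right-hand side equals $\frac{(1+q)}{2q}\bigl(\frac{\Theta^2(-q)}{(q)_\infty} - \phi(-q)\bigr)$.

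Finally, to extract Ramanujan's identity I compute the same limit on the left independently. Assembling the common singular portions into a single ratio
\[
\frac{(1+q)(-q^2;q^2)_\infty \bigl[\zeta q(-\zeta q^3;q^2)_\infty - (-\zeta q;q^2)_\infty S(\zeta)\bigr]}{(\zeta q^2;q^2)_\infty},\qquad S(\zeta) := \sum_{n\ge 0}\frac{(-1/\zeta;q^2)_n (-q)^n}{(-q^2;q^2)_n},
\]
whose numerator vanishes at $\zeta = q^{-2}$, and applying L'H\^{o}pital, the logarithmic-derivative contributions $\sum_{k\ge 0} q^{2k+1}/(1+q^{2k+1})$ from $(-\zeta q^3;q^2)_\infty$ and $(-\zeta q;q^2)_\infty$ cancel between the two pieces; what remains is essentially $S'(q^{-2})$, which by swapping the order of summation and collapsing a geometric series can be rearranged into the series defining $f(q) = \sum_{n\ge 0} q^{n^2}/(-q)_n^2$, so that the LHS limit equals $\frac{(1+q)(\phi(-q) - f(q))}{2q}$. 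Equating the two expressions gives $2\phi(-q) - f(q) = \Theta^2(-q)/(q)_\infty$. The principal obstacle is the final rearrangement identifying the L'H\^{o}pital output with the $f(q)$ series, which requires careful $q$-series bookkeeping of the derivative of $S(\zeta)$.
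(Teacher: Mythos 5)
Your overall strategy is the same as the paper's: specialize \eqref{thm4neweqn1} at $\z\to q^{-2}$, match the simple poles coming from $(\z q^2;q^2)_\infty$, and compare the two evaluations of the regular part. Your computations of $B(q^{-2})=\frac{(1+q)\Theta^2(-q)}{2q(q)_\infty}$ and $D(q^{-2})=-\frac{(1+q)\phi(-q)}{2q}$ are correct, as is the observation that the residues cancel. The gap is in your evaluation of the left-hand limit. The limit of $\overline{P}_{\textup{ev}}(\z;q)-C(\z)$ is \emph{not} just the L'H\^opital limit of the assembled singular ratio: it also contains the regular terms of \eqref{zgen3} at $\z=q^{-2}$, namely
\[
(1+q)\sum_{n\ge 1}\frac{\left(-q^{2n+1}\right)_{\infty}q^{2n-1}}{\left(q^{2n};q^2\right)_{\infty}},
\]
a nontrivial $q$-series that your argument never touches. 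This series is precisely where $f(q)$ enters in the paper's proof, via the external identities \cite[equation (1.6)]{andrews-kumar} and \cite[equation (26.27)]{Fine}; without evaluating it, you cannot conclude anything about the left-hand limit.

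Moreover, the mechanism you propose for producing $f(q)$ does not work. After the cancellation of the logarithmic-derivative sums (which you correctly note), the L'H\^opital output of the singular ratio reduces to a multiple of $\frac{q^2}{1+q}-\frac{(1+q)}{q}S'\!\left(q^{-2}\right)$, and differentiating $S$ and swapping the order of summation gives $S'\!\left(q^{-2}\right)=-\frac{q^2}{1+q}\sum_{j\ge 1}\frac{(-1)^jq^{3j}}{1+q^{2j}}$. This Lambert series evaluates, via Lemma \ref{AGLeqn} and \eqref{Ramtheta}, to an expression in $\Theta^2(-q)$; the singular ratio's limit is exactly the paper's $\frac{\mathcal{L}(q)}{(q^2;q^2)_\infty}=\frac{(1+q)\left(1-\Theta^2(-q)\right)}{4q(q)_\infty}$ (up to sign), and no amount of rearrangement turns it into the series $\sum_{n\ge0}\frac{q^{n^2}}{(-q)_n^2}$. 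Your claimed value $\frac{(1+q)(\phi(-q)-f(q))}{2q}$ for the left-hand limit is numerically equal to the right-hand limit only because the corollary is true, so asserting it at this point is circular. To repair the proof you must separately evaluate the regular sum displayed above in terms of $f(q)$, $\phi(-q)$, and eta-quotients, which is the actual content of the paper's final step.
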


%{\bf AD: Added the following remark.}
\begin{remark}
There are at least three known proofs of \Cref{RamanujanIdentity}, namely, Watson's \cite{watson final}, Fine's \cite{Fine}, and one in \cite{AB5}.\hspace{-0.11 cm} Neither of these explicitly appear in Ramanujan's work which suggests that his own method was perhaps different. Our techniques, however, mainly employ the identities of, or well-known to, Ramanujan, as can be seen in our proof of Corollary \ref{RamanujanIdentity}.
\end{remark}
 
Finally, we consider concave compositions with the following restrictions: 
 \begin{enumerate}
 	\item The central part is even and all other parts are odd.
 	\item Odd parts on the left side of the central part may be overlined whereas odd parts to the right of the central part are non-overlined.
 \end{enumerate}
 Let $\overline{v}_{\textup{od}}(n)$ denote the number of such compositions of $n$. For example, $\overline{v}_{\textup{od}}(2)=6$, enumerated by $1+1+\underline{0},\ \overline{1}+1+\underline{0},\ 1+\underline{0}+1,\ \overline{1}+\underline{0}+1,\ \underline{0}+1+1,\ \underline{2}.$
The generating function of $\overline{v}_{\textup{od}}(n)$ is given by
\[
\overline{V}_{\textup{od}}(q):=\sum_{n\ge 0}\overline{v}_{\textup{od}}(n)q^n=\sum_{n\ge 0}\frac{\left(-q^{2n+1};q^2\right)_{\infty}q^{2n}}{\left(q^{2n+1};q^2\right)^2_{\infty}}.
\]
For such a concave composition $\l$, the \emph{rank relative to the non-overlined odd parts} is defined as $\mathcal{R}(\l):=s-r$, where $s$ (resp. $r$) is the number of non-overlined odd parts to the right (resp. to the left) of the central part of the composition. Let $\overline{v}_{\textup{od}}(m,n)$ denote the number of such concave compositions of $n$ with rank with respect to non-overlined odd parts is $m$. For $n=2$, we have:
\vspace{0.3 cm}
\begin{center}
	{\renewcommand{\arraystretch}{1.4}%
		\begin{tabular}{|c|c|c|c|c|c|c|}
			\hline
			$\lambda$
			& $1+1+\underline{0}$ & $\overline{1}+1+\underline{0}$ & $1+\underline{0}+1$
			& $\overline{1}+\underline{0}+1$ & $\underline{0}+1+1$ & $\underline{2}$ \\
			\hline
			$\mathcal{R}(\lambda)=m$
			& $-2$ & $-1$ & $0$ & $1$ & $2$ & $0$ \\
			\hline
		\end{tabular}
	} 
\end{center}
\vspace{0.3 cm}
Therefore, $\overline{v}_{\textup{od}}(-2,2)=\overline{v}_{\textup{od}}(2,2)=1$, $\overline{v}_{\textup{od}}(-1,2)=\overline{v}_{\textup{od}}(1,2)=1$, and $\overline{v}_{\textup{od}}(0,2)=2$. The generating function of $\overline{v}_{\textup{od}}(m,n)$ is given by
\begin{equation}\label{zgen1}
\overline{V}_{\textup{od}}(\z;q):=\sum_{\substack{n\ge 0\\m\in\mathbb{Z}}}\overline{v}_{\textup{od}}(m,n)\z^m q^n=\sum_{n\ge 0}\frac{\left(-q^{2n+1};q^2\right)_{\infty}q^{2n}}{\left(\z q^{2n+1},\z^{-1}q^{2n+1};q^2\right)_{\infty}}.
\end{equation}
From the following theorem, we see that $\overline{V}_{\textup{od}}(\z;q)$ is related to known rank generating functions.
\begin{theorem}\label{thm3}
	We have
	\[
	q\overline{V}_{\textup{od}}(\z;q)=\frac{\left(-q;q^2\right)_{\infty}\left(\operatorname{R2}\left(-\z;q\right)-1\right)}{\left(\z q,\z^{-1}q;q^2\right)_{\infty}} -R\left(-\z;q^2\right)+1.
	\]
\end{theorem}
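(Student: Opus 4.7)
\emph{Sketch.} The plan is to first factor the common infinite products out of the generating function, reducing \Cref{thm3} to a pure $q$-series identity, and then to apply a Heine-type $q$-hypergeometric transformation so that the rank series $\operatorname{R2}(-\z;q)$ and $R(-\z;q^2)$ on the right-hand side emerge in tandem.

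Using the telescoping identities $(-q^{2c+1};q^2)_\infty=(-q;q^2)_\infty/(-q;q^2)_c$ and $(\z^{\pm 1}q^{2c+1};q^2)_\infty=(\z^{\pm 1}q;q^2)_\infty/(\z^{\pm 1}q;q^2)_c$, the definition \eqref{zgen1} rearranges as
\[
\overline V_{\textup{od}}(\z;q)=\frac{(-q;q^2)_\infty}{(\z q,\z^{-1}q;q^2)_\infty}\sum_{c\ge 0}\frac{(\z q,\z^{-1}q;q^2)_c\,q^{2c}}{(-q;q^2)_c},
\]
so that \Cref{thm3} becomes equivalent to the $q$-series identity
\[
q\sum_{c\ge 0}\frac{(\z q,\z^{-1}q;q^2)_c\,q^{2c}}{(-q;q^2)_c}=\operatorname{R2}(-\z;q)-1+\frac{(\z q,\z^{-1}q;q^2)_\infty}{(-q;q^2)_\infty}\bigl(1-R(-\z;q^2)\bigr).
\]

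To establish this equivalent identity, I would view the inner sum as a nonterminating $q$-hypergeometric series in base $q^2$ and apply a Heine-type transformation (or a limiting case of a Rogers $_3\phi_2$ identity), choosing parameters so that the Pochhammer pair $(\z q,\z^{-1}q;q^2)_c$ in the numerator is exchanged against $(\z q^2,\z^{-1}q^2;q^2)_c$ in the denominator. The infinite-product factor produced by the transformation is precisely $(\z q,\z^{-1}q;q^2)_\infty/(-q;q^2)_\infty$, while the new summand becomes $(-q;q^2)_n q^{n^2}/(\z q^2,\z^{-1}q^2;q^2)_n$, which is recognised as $\operatorname{R2}(-\z;q)$ once the sign twist $\z\mapsto -\z$ is absorbed into the Pochhammer symbols. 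The residual piece of the transformation is then matched against the series \eqref{rank1} (with $q\mapsto q^2$ and $\z\mapsto -\z$) to produce $R(-\z;q^2)$, and the constants $\pm 1$ on the right-hand side arise from the $n=0$ summands of the two rank generating functions.

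The main obstacle will be selecting the specific $q$-hypergeometric transformation that simultaneously yields both rank series with the correct sign twists, and carefully tracking the boundary constants $\pm 1$ that appear once the $n=0$ contributions are peeled off. As a sanity check on the theta-quotient prefactor, one can use the Jacobi triple product in the form $(\z q,\z^{-1}q;q^2)_\infty(q^2;q^2)_\infty=\sum_{m\in\mathbb{Z}}(-\z)^m q^{m^2}$, which confirms that the modular component of the right-hand side has the expected shape.
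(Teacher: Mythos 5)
Your reduction is exactly the paper's: the same factorization of the infinite products turns \Cref{thm3} into precisely the $q$-series identity you display, and the predicted shape of the transformed series (one sum giving $\operatorname{R2}(-\z;q)-1$, the other, weighted by $(\z q,\z^{-1}q;q^2)_\infty/(-q;q^2)_\infty$, giving $R(-\z;q^2)-1$) is correct, including the sign twist $\z\mapsto-\z$ coming from the Pochhammer symbols $(-\z q^2,-\z^{-1}q^2;q^2)_n$. What you leave open --- and correctly flag as the main obstacle --- is the identity that actually performs this step, and here your description is slightly off target: a Heine transformation is a \emph{two}-term relation (one series equals a product times another series), whereas what is needed is a \emph{three}-term relation in which a single series splits into a "partial-theta-free" sum minus an infinite product times a second sum. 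The paper supplies this as \Cref{lemident1}, Ramanujan's identity $(3.17)_{\mathrm{R}}$ from the lost notebook, applied with $q\mapsto q^2$, $a=-\z q^{-1}$, $b=-\z^{-1}q^{-1}$, $c=q^{-1}$; these choices give $-aq^2=\z q$, $-bq^2=\z^{-1}q$, $-cq^2=-q$, $\tfrac{ab}{c}=q^{-1}$, $\tfrac{ab}{c^2}=1$, and $\tfrac{aq^2}{c}=-\z q^2$, $\tfrac{bq^2}{c}=-\z^{-1}q^2$, which produces exactly the two sums you anticipate. So the gap is not conceptual but bibliographic: once you replace "a Heine-type transformation" by this specific three-term identity (which the paper also uses to prove \Cref{thm1} and \Cref{Thm1}), your argument closes and coincides with the paper's proof.
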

\begin{remark*}
Taken together, our results illustrate that natural restrictions on overpartitions and concave compositions can lead to generating functions exhibiting mixed modular structures. In particular, our examples show that combinations of modular forms, mock theta functions, mock Maass theta functions, and false theta functions arise naturally in combinatorial generating functions.	
\end{remark*} 

%which will be used in the proofs of Theorems \ref{thm1}, \ref{thm2}, \ref{Thm1}, \ref{Thm1cor}, \ref{newthm1}, \ref{thm4}, \ref{cor7.4}, and \ref{thm3}. Then we state

 The paper is organized as follows. In \Cref{sec:prelim}, first we recall $q$-series transformations required for this paper and state Ingham's Tauberian theorem and the Euler--Maclaurin summation formula. We prove \Cref{thm1} and \Cref{thm1a} in \Cref{genfn}. %In Section \ref{sec4}, we first show that $(\overline{p}_{\textup{od}}(n))_{n\ge 1}$ is monotonic and using this, we prove \Cref{thm1a}.  
 Section \ref{sec5} presents proofs of \Cref{thm2} and \Cref{thm6.2}. %We prove the monotonicity of $(\overline{p}_{\textup{ev}}(n))_{n\ge 1}$ and consequently show \Cref{thm6.2} in Section \ref{sec6}.
  In \Cref{z:Gen}, we show Theorem \ref{Thm1}, Corollary \ref{Thm1cor}, Theorems \ref{newthm1} and \ref{thm4}, Corollaries \ref{cor7.4}, \ref{newthm2}, and \ref{RamanujanIdentity}, and \Cref{thm3}. Finally in \Cref{cr}, we state problems for future research. %on modularity of bi-variate $q$-hypergeometric series appearing in \Cref{Thm1} and \Cref{thm4}.

 %The paper is organized as follows. In \Cref{sec:prelim}, first we recall the $q$-series transformations required for this paper and then state Ingham's Tauberian theorem and the Euler--Maclaurin summation formula. We prove \Cref{thm1} in \Cref{genfn}. In Section \ref{sec4}, we first show that $(\overline{p}_{\textup{od}}(n))_{n\ge 1}$ is monotonic and using this, prove \Cref{thm1a}. Section \ref{sec5} presents proof of \Cref{thm2}. We prove monotonicity of $(\overline{p}_{\textup{ev}}(n))_{n\ge 1}$ and consequently prove \Cref{thm6.2} in Section \ref{sec6}. In \Cref{z:Gen}, we give proofs of Theorems \ref{Thm1}, \ref{Thm1cor}, \ref{newthm1}, \ref{thm4}, \ref{cor7.4}, \ref{newthm2}, and \ref{thm3}. Finally in \Cref{cr}, we address a few problems on modularity of bi-variate $q$-hypergeometric series appeared in \Cref{Thm1} and \Cref{thm4}.

%{\bf KB: To discuss whether intro is too long. Intro, Acknowledgement, references are nearly $50\%$ of the paper with preliminaries it is over $50\%$. \kbf To be fixed. \color{blue} The intro and the prelim sections shortened as much possible.}

\section*{Acknowledgements}
The first two authors have received funding from the European Research Council (ERC)
under the European Union’s Horizon 2020 research and innovation programme (grant agreement
No. 101001179). The third author is supported by the Swarnajayanti Fellowship grant SB/SJF/2021-22/08 of ANRF (Government of India) and by the N Rama Rao chair professorship at IIT Gandhinagar. He also thanks the University of Cologne for its warm hospitality during his visit in May 2025.

\section{Preliminaries}\label{sec:prelim}

%Following \cite[equations (1.1) and (2.1)]{CFLZ}, define.
\subsection{$q$-series identities} 
 
Here we list $q$-series identities which are used in the proofs of our theorems. We begin with two identities of Ramanujan. The first was stated in \cite[equation (3.17)$_\text{R}$]{A0}. 

%{\bf KB: Sometimes we list conditions and sometimes not. \color{blue} K: Yes, because, in the references whenever conditions are mentioned, we only state the conditions, otherwise we did not. \color{black} KB: I feel should be consistent though. \color{blue} K: according to our discussion, let it be as it is.}

\begin{lemma}\label{lemident1} 
We have %From \cite[]{A0}, we have
\begin{align*}
&\sum_{n\ge 0}\!\frac{\left(-aq,-bq\right)_nq^{n+1}}{\left(-cq\right)_n}\!
=\!\frac{c}{ab}\!\sum_{n\ge 1}\!\frac{\left(-c^{-1}\right)_n \pa{ab}{c}^n\! q^{\frac{n(n+1)}{2}}}{\left(\frac{aq}{c},\frac{bq}{c}\right)_n}\!-\!\frac{c\left(-aq,-bq\right)_{\infty}}{ab\left(-cq\right)_{\infty}}\!\sum_{n\ge 1}\!\frac{\pa{ab}{c^2}^nq^{n^2}}{\left(\frac{aq}{c},\frac{bq}{c}\right)_n}.%\label{Ident1}\\
%&\sum_{n\ge0}\frac{(cq)_nq^n}{(-aq,-bq)_n}\!=\!\left(1+a^{-1}\right)\!\sum_{n\ge0}\!\frac{(cq)_n\left(-\frac{b}{a}\right)^n\!q^{\frac{n(n+1)}{2}}}{\left(-\frac{cq}{a}\right)_{n+1}(-bq)_n}\!-\!\frac{a^{-1}(cq)_{\infty}}{(-aq,-bq)_{\infty}}\!\sum_{n\ge0}\!\frac{\left(-\frac{b}{a}\right)^n\!q^{\frac{n(n+1)}{2}}}{\left(-\frac{cq}{a}\right)_{n+1}}.\label{ident2}
\end{align*}
\end{lemma}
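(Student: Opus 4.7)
This is an identity from Ramanujan's Lost Notebook (labelled (3.17)$_{\mathrm R}$ in Andrews' treatment [A0]), so a proof already exists in the literature, but a plausible ``from scratch'' approach is to combine Heine-type $q$-hypergeometric transformations with a partial-fraction / residue decomposition that splits off the theta-like factor $\frac{(-aq,-bq)_\infty}{(-cq)_\infty}$ appearing on the right.

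The plan is to first regularize the LHS by introducing a parameter. Define
\[
F(z) := \sum_{n \ge 0}\frac{(-aq,-bq)_n}{(-cq)_n}\,z^{n+1},
\]
so that the LHS equals $F(q)$. The absence of $(q)_n$ in the denominator means $F(z)$ is not directly a ${}_{2}\phi_1$, but it is amenable to a Rogers--Fine style transformation. My main step would be to apply the telescoping identity
\[
\frac{(-aq,-bq)_n}{(-cq)_n} \;=\; \frac{(-aq,-bq)_\infty}{(-cq)_\infty}\cdot\frac{(-cq^{n+1})_\infty}{(-aq^{n+1},-bq^{n+1})_\infty},
\]
and then expand the ratio of tail products using the $q$-Gauss sum (or a direct $q$-binomial expansion in two parameters). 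This cleanly isolates the prefactor $(-aq,-bq)_\infty/(-cq)_\infty$ that features in the second RHS sum, while the complementary ``finite'' contribution should reorganize, after interchanging the order of summation, into the first RHS sum carrying the factors $(-c^{-1})_n$ and $q^{n(n+1)/2}$. The $(-c^{-1})_n$ is a telltale sign of a Heine-type pairing, which gives me further confidence that an iterated Heine transformation is the underlying mechanism.

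The main obstacle will be matching the two differing $q$-power conventions --- $q^{n(n+1)/2}$ in the first sum versus $q^{n^2}$ in the second --- and producing the precise coefficient $(-c^{-1})_n$. Both are likely to emerge from the same double sum only after a carefully chosen swap of summation order combined with a Jacobi-type rewriting of the tail, and lining up the factorials $(aq/c,bq/c)_n$ in the denominators will require tracking the shift $n \mapsto n+k$ when the outer and inner sums are exchanged. A subsidiary technicality is convergence: the second RHS sum converges only for $|ab/c^2|<1$, which I would assume initially and then extend to the stated parameter range by analytic continuation in $a,b,c$.
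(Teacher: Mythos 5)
The paper does not prove this lemma at all: it is quoted verbatim as Ramanujan's identity, entry $(3.17)_{\mathrm{R}}$ in Andrews \cite{A0}, and the citation is the entire justification. Your opening sentence therefore already matches everything the paper does, and if you had stopped there the comparison would be trivial.

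The ``from scratch'' sketch you append, however, has a genuine gap at its central step. After the (correct) telescoping rewrite
\[
\frac{(-aq,-bq)_n}{(-cq)_n}=\frac{(-aq,-bq)_\infty}{(-cq)_\infty}\cdot\frac{\left(-cq^{n+1}\right)_\infty}{\left(-aq^{n+1},-bq^{n+1}\right)_\infty},
\]
the prefactor $\frac{(-aq,-bq)_\infty}{(-cq)_\infty}$ multiplies \emph{every} term of the resulting double sum uniformly. But the identity you are trying to prove splits into one piece that carries this prefactor and one piece that does not; the ``complementary finite contribution'' you invoke to produce the first right-hand sum simply does not exist in the decomposition as you have set it up. To recover the prefactor-free term you would need part of your double sum to resum to $\frac{(-cq)_\infty}{(-aq,-bq)_\infty}$ times the first sum, which is precisely the nontrivial content of the identity and is nowhere established. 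A second, smaller problem: the tail ratio $\frac{(-cq^{n+1})_\infty}{(-aq^{n+1},-bq^{n+1})_\infty}$ has one infinite product over two, which is not the shape produced by a single application of $q$-Gauss (two over two), so even the proposed expansion of the tail is not immediate. The honest conclusion is that your sketch identifies plausible tools (Heine, Rogers--Fine) but does not constitute a proof; since the paper itself relies solely on the reference to \cite{A0}, the citation is the appropriate justification here, and the speculative portion should either be carried out in full or omitted.
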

\noindent The second identity of Ramanujan was given in  \cite[equation (3.18)$_\text{R}$]{A0}.

\begin{lemma}\label{lemident2}
We have
\begin{align*}
%&q\!+\!\sum_{n\ge 1}\!\frac{\left(-aq,-bq\right)_nq^{n+1}}{\left(-cq\right)_n}\!
%=\!\frac{c}{ab}\!\sum_{n\ge 1}\!\frac{\left(-c^{-1}\right)_n \pa{ab}{c}^n\! q^{\frac{n^2+n}{2}}}{\left(\frac{aq}{c},\frac{bq}{c}\right)_n}\!-\!\frac{c}{ab}\!\frac{\left(-aq,-bq\right)_{\infty}}{\left(-cq\right)_{\infty}}\!\sum_{n\ge 1}\!\frac{\pa{ab}{c^2}^nq^{n^2}}{\left(\frac{aq}{c},\frac{bq}{c}\right)_n},\label{Ident1}\\
\sum_{n\ge0}\frac{(cq)_nq^n}{(-aq,-bq)_n}\!=\!\left(1+a^{-1}\right)\!\sum_{n\ge0}\!\frac{(cq)_n\left(-\frac{b}{a}\right)^n\!q^{\frac{n(n+1)}{2}}}{\left(-\frac{cq}{a}\right)_{n+1}(-bq)_n}\!-\!\frac{a^{-1}(cq)_{\infty}}{(-aq,-bq)_{\infty}}\!\sum_{n\ge0}\!\frac{\left(-\frac{b}{a}\right)^n\!q^{\frac{n(n+1)}{2}}}{\left(-\frac{cq}{a}\right)_{n+1}}.%\label{ident2}
\end{align*}
\end{lemma}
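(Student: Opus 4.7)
Lemma \ref{lemident2} is equation (3.18)$_R$ from Ramanujan's lost notebook and a proof is recorded by Andrews in \cite{A0}; the cleanest strategy is therefore to cite that source. For an independent derivation, I would mirror the standard treatment of the companion Lemma \ref{lemident1}: parameterise the LHS, derive a first-order $q$-difference equation, iterate the relation, and regroup the resulting telescoping sum into the two pieces appearing on the RHS.

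Explicitly, I would set
$$F(z) := \sum_{n\ge 0}\frac{(cq;q)_n z^n}{(-aq,-bq;q)_n},$$
so that the LHS equals $F(q)$. The ratio of consecutive summands is
$$\frac{(1-cq^{n+1})z}{(1+aq^{n+1})(1+bq^{n+1})},$$
which I would decompose by partial fractions in the variable $q^{n+1}$. This splits the recursion governing $F(z)$ into two one-pole pieces, and iterating the associated two-term relation generates precisely the Gaussian weight $q^{n(n+1)/2}$, the geometric factor $(-b/a)^n$, and the Pochhammer $(-cq/a;q)_{n+1}$ that appear on the RHS. The boundary term of the very first iteration yields the prefactor $1+a^{-1}$ in front of the first sum, while the tail of the iteration telescopes against the $q$-product $(cq;q)_\infty/(-aq,-bq;q)_\infty$ and produces the second sum with the stated minus sign.

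The main obstacle I expect is the bookkeeping: one has to verify that the boundary factor $1+a^{-1}$ lands only in front of the first sum, that the sign and product prefactor of the second sum come out correctly, and that both sums start at $n=0$ (unlike in Lemma \ref{lemident1}, where the analogous sums begin at $n=1$). Useful sanity checks are the specialisations $c = 0$, where the identity should reduce to a classical Ramanujan partial-theta identity; $b = 0$, where the LHS simplifies to a single Pochhammer sum; and the symmetric specialisation $a=b$, where the iteration becomes especially transparent and the prefactor $1+a^{-1}$ admits a direct interpretation.
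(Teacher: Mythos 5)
Your primary recommendation matches the paper exactly: the paper gives no independent proof of this lemma and simply records it as Ramanujan's identity, citing equation $(3.18)_{\mathrm{R}}$ of \cite{A0}. Your additional $q$-difference-equation sketch is a plausible outline of an independent derivation but is not carried out here and is not what the paper does.
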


%{\bf \color{blue} K: Added the following two lemmas which is required to prove \Cref{newthm2}.} 

Recall the following two identities from \cite[equation (2.2.5) and equation (2.2.6)]{Andbook}. 
\begin{lemma}\label{Euler}
For $|t|, |q|<1$, we have %{\bf \color{blue} K: added the first identity which we will use in proofs of \Cref{monotone1} and \Cref{monotone2}.}
\[
\sum_{n\ge 0}\frac{t^n}{(q)_n}=\frac{1}{(t)_{\infty}}\ \ \text{and}\ \ \sum_{n\ge 0}\frac{t^n q^{\frac{n(n-1)}{2}}}{\left(q\right)_n}=\left(-t\right)_{\infty}.
\]	
\end{lemma}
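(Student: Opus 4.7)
Both identities follow from the standard $q$-difference equation technique: derive a first-order functional equation in $t$ for the left-hand side, iterate it, and take the limit $N\to\infty$ using $|q|<1$ to control the tail. No deep machinery is needed.

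For the first identity, set $f(t):=\sum_{n\geq 0} t^n/(q)_n$. Since $(1-q^n)/(q)_n = 1/(q)_{n-1}$ for $n\geq 1$, I compute
\[
f(t)-f(qt) \;=\; \sum_{n\geq 1}\frac{t^n(1-q^n)}{(q)_n} \;=\; \sum_{n\geq 1}\frac{t^n}{(q)_{n-1}} \;=\; t\sum_{m\geq 0}\frac{t^m}{(q)_m} \;=\; tf(t),
\]
which rearranges to $f(t)=f(qt)/(1-t)$. Iterating this $N$ times gives $f(t)=f(q^N t)/(t;q)_N$, and letting $N\to\infty$ sends $f(q^N t)\to f(0)=1$ while $(t;q)_N\to (t)_{\infty}$, yielding $f(t)=1/(t)_{\infty}$.

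For the second identity, set $g(t):=\sum_{n\geq 0} t^n q^{n(n-1)/2}/(q)_n$ and use $n(n-1)/2+n = n(n+1)/2$ to obtain
\[
g(t)-g(qt) \;=\; \sum_{n\geq 1}\frac{t^n q^{n(n-1)/2}}{(q)_{n-1}} \;=\; t\sum_{m\geq 0}\frac{(qt)^m q^{m(m-1)/2}}{(q)_m} \;=\; t\,g(qt),
\]
so $g(t)=(1+t)\,g(qt)$. Iterating yields $g(t)=(-t;q)_N\, g(q^N t)$; letting $N\to\infty$ gives $g(t)=(-t)_{\infty}$, as desired.

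There is no real obstacle here. The only justification needed is that $f(q^N t)\to 1$ and $g(q^N t)\to 1$ as $N\to\infty$, which is immediate from termwise absolute convergence of the defining series for $|t|,|q|<1$. These are the two classical Euler identities — essentially the $q$-exponentials $e_q(t)$ and $E_q(t)$ — and appear as equations $(2.2.5)$--$(2.2.6)$ of Andrews' book, the reference already cited in the statement.
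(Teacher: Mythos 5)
Your proof is correct. The paper gives no argument of its own here---it simply records these two classical Euler identities by citing Andrews' book---and your $q$-difference-equation derivation (establishing $f(t)=f(qt)/(1-t)$ and $g(t)=(1+t)g(qt)$, iterating, and passing to the limit using continuity at $t=0$) is the standard self-contained proof, consistent with how they are obtained in the cited reference as limiting cases of the $q$-binomial theorem.
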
 

%{\bf \color{blue} K: Deleted Slater's identity as no longer needed.}
%Due to Slater \cite[equation (38)]{Slater}, we have the following identity of the Rogers--Ramanujan type.

%\begin{lemma}\label{SlaterId}
%We have
%\[
%\sum_{n\ge 0}\frac{q^{2n(n+1)}}{\left(q\right)_{2n+1}}=\frac{1}{\left(q,q^4,q^6,q^7,q^9,q^{10},q^{12},q^{15};q^{16}\right)_{\infty}}.
%\]	
%\end{lemma}

%First, we state two identities of Ramanujan \cite[equation (3.17)$_\text{R}$, equation (3.18)$_\text{R}$]{A0}. 
Next, we require an identity due to Garvan \cite[Theorem 1.2]{G}.
\begin{lemma}\label{Garvan}
	We have
	\begin{equation*}%\label{}
	\sum_{n\ge 1}\frac{(-1)^{n+1}\zeta^n q^{n^2}}{\left(1-\zeta q^{2n}\right)\left(\zeta q;q^2\right)_n }=\sum_{n\ge 1}\frac{\left(q\right)_{n-1}\zeta^n q^{\frac{n(n+1)}{2}}}{\left(\zeta q\right)_n}.
	\end{equation*}
\end{lemma}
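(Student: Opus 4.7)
I treat the identity as an equality of formal power series in $\z$ with coefficients in $\mathbb{Z}[[q]]$, writing $F(\z)$ for the LHS and $G(\z)$ for the RHS. The plan has two phases: normalize both sides into standard basic hypergeometric form, then find a single transformation that interchanges them, with a $q$-difference equation argument as a fallback.

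First, I simplify $G$. Reindexing $m=n-1$ and using $(\z q)_{m+1}=(1-\z q)(\z q^2)_m$ yields
\[
G(\z)=\frac{\z q}{1-\z q}\sum_{m\ge 0}\frac{(q)_m (\z q^2)^m q^{\frac{m(m-1)}{2}}}{(\z q^2)_m}.
\]
The inner sum is a ${}_2\phi_1$ after the standard $a\to\infty$ confluence that manufactures the factor $q^{m(m-1)/2}$, so Heine's second or third transformation (or a limiting $q$-Gauss sum) is available. On the $F$ side, the factorization $(1-\z q^{2n})(\z q;q^2)_n=(\z q)_{2n}/(\z q^2;q^2)_{n-1}$ lets me rewrite
\[
F(\z)=\sum_{n\ge 1}\frac{(-1)^{n+1}\z^n q^{n^2}(\z q^2;q^2)_{n-1}}{(\z q)_{2n}},
\]
which exhibits $F$ as a Bailey/Hecke-type sum at base $q^2$. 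The most promising route is then to apply the Rogers--Fine identity to $G$: this identity is designed precisely to convert a sum with triangular exponents $q^{m(m+1)/2}$ into one carrying quadratic exponents $q^{m^2}$ together with a Hecke-type denominator of the form $(1-\cdot\, q^{2m})^{-1}$, and the output should match $F$ after minor bookkeeping of the resulting $(\a, \b, \tau)$-parameters.

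The main obstacle is the clash of bases: $F$ mixes a base-$q^2$ Appell--Lerch factor $(1-\z q^{2n})^{-1}$ with the quadratic weight $q^{n^2}$, whereas $G$ lives at base $q$ with the triangular weight $q^{n(n+1)/2}$. Reconciling these while keeping the sign-alternation and the $\z$-dependence consistent is the delicate step, and is where the Rogers--Fine application must be set up very carefully. If the direct transformation proves too fragile, I would fall back to checking that $F$ and $G$ satisfy the same first-order $q$-difference equation in $\z$ of the shape $A(\z,q)H(\z)+B(\z,q)H(\z q^2)=C(\z,q)$ for explicit polynomials $A,B,C$, and agree on enough initial coefficients of $\z$; since both sides are clearly holomorphic in $\z$ near $0$, uniqueness then forces $F=G$.
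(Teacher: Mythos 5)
The paper does not prove this lemma at all: it is quoted verbatim from Garvan \cite[Theorem 1.2]{G}, so there is no in-paper argument to compare your route against. Your attempt therefore has to stand on its own, and as written it does not: it is a plan whose decisive step is asserted rather than carried out. Your preliminary normalizations are fine --- the reindexing $G(\z)=\frac{\z q}{1-\z q}\sum_{m\ge 0}\frac{(q)_m(\z q^2)^mq^{\frac{m(m-1)}{2}}}{(\z q^2)_m}$ is correct, and so is the factorization $(1-\z q^{2n})(\z q;q^2)_n=(\z q)_{2n}/(\z q^2;q^2)_{n-1}$. But the entire content of the lemma is the claim that the Rogers--Fine output of $G$ ``should match $F$ after minor bookkeeping,'' and you never exhibit the parameters, perform the confluence, or do the bookkeeping. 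You yourself flag the base-$q$ versus base-$q^2$ clash as ``the delicate step''; that is precisely the step a proof must supply.

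There is also a concrete inaccuracy in the mechanism you propose. The Rogers--Fine identity (Lemma \ref{RF} in the paper) produces the factor $\left(1-\a tq^{2n}\right)$ in the \emph{numerator} and $(t)_{n+1}$ in the denominator, whose extremal factor is $1-tq^{n}$, not $1-(\cdot)q^{2n}$. So it does not directly manufacture the Appell--Lerch-type denominator $(1-\z q^{2n})^{-1}$ appearing in $F$; getting that structure requires a different (or iterated/limiting) transformation, which you have not identified. The fallback is likewise unexecuted: you neither derive the first-order $q$-difference equation in $\z$ with shift $\z\mapsto\z q^2$ for either side nor explain why both sides satisfy the \emph{same} one, and given that $G$ naturally transforms under $\z\mapsto\z q$ while $F$ transforms under $\z\mapsto\z q^2$, this is not automatic. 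To repair the attempt, either carry out the transformation explicitly with all parameters displayed and verified, or simply cite Garvan's theorem as the paper does.
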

%Garvan  showed that 
 Due to Andrews \cite[equation (4.4)]{Kang}, we have the following:
\begin{lemma}\label{kang}
We have
\begin{equation*}%\label{kang}
\sum_{n\ge 0}\frac{q^{\frac{n(n+1)}{2}}}{\left(-q\right)_{n+1}}=1.
\end{equation*}	
\end{lemma}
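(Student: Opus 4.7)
The plan is to prove Lemma \ref{kang} by a direct telescoping argument. The key algebraic observation is the identity
\[
\frac{q^{\frac{n(n+1)}{2}}}{(-q)_{n+1}}=\frac{q^{\frac{n(n+1)}{2}}}{(-q)_{n}}-\frac{q^{\frac{(n+1)(n+2)}{2}}}{(-q)_{n+1}},
\]
valid for all $n\ge 0$. To verify this, I would bring the right-hand side over the common denominator $(-q)_{n+1}$, using $(-q)_{n+1}=(-q)_n(1+q^{n+1})$. The numerator becomes $q^{\frac{n(n+1)}{2}}(1+q^{n+1})-q^{\frac{(n+1)(n+2)}{2}}$, and since $\frac{n(n+1)}{2}+(n+1)=\frac{(n+1)(n+2)}{2}$, the last two terms cancel, leaving $q^{\frac{n(n+1)}{2}}$, as required.

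Next, I would sum the identity from $n=0$ to $n=N$. The right-hand side telescopes to
\[
\frac{q^{0}}{(-q)_{0}}-\frac{q^{\frac{(N+1)(N+2)}{2}}}{(-q)_{N+1}}=1-\frac{q^{\frac{(N+1)(N+2)}{2}}}{(-q)_{N+1}}.
\]
Finally I would let $N\to\infty$. Since $|q|<1$, the numerator $q^{\frac{(N+1)(N+2)}{2}}$ tends to $0$, while $(-q)_{N+1}\to(-q)_{\infty}$, a finite nonzero product. Hence the trailing term vanishes in the limit, yielding the claimed value $1$.

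There is essentially no obstacle: the only things to confirm are the exponent arithmetic in the telescoping identity and the convergence of the remainder. Both are immediate from elementary manipulations and the assumption $|q|<1$.
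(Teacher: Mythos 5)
Your proof is correct: the partial-fraction identity is verified exactly as you describe (the exponent arithmetic $\frac{n(n+1)}{2}+(n+1)=\frac{(n+1)(n+2)}{2}$ is right), the sum telescopes to $1-\frac{q^{(N+1)(N+2)/2}}{(-q)_{N+1}}$, and for $|q|<1$ the tail vanishes since the numerator tends to $0$ while $(-q)_{N+1}\to(-q)_{\infty}\neq0$. The paper itself gives no proof of this lemma --- it is simply quoted from Kang's work (equation (7.5) there), so your argument is not so much a different route as the only route on display: a short, self-contained, elementary derivation that makes the paper's citation unnecessary. The only cosmetic remark is that your displayed identity is most cleanly phrased by setting $a_n:=q^{n(n+1)/2}/(-q)_n$ and writing the summand as $a_n-a_{n+1}$, which makes the telescoping immediate; otherwise there is nothing to add.
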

In \cite[Theorem 1.1]{donato}, we find the following identity.
\begin{lemma}\label{Donato}
We have
\begin{equation*}
 \sum_{n\ge 0} \frac{nq^{\frac{n(n-1)}{2}}}{(-q)_n} =\sigma(q). 
\end{equation*}	
\end{lemma}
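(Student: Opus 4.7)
The plan is to prove the identity by an Abel-type summation combined with a telescoping induction. Set $a_n := \frac{q^{n(n-1)/2}}{(-q)_n}$, so that the $n=0$ term on the left-hand side vanishes and it suffices to evaluate $\sum_{n\ge 1} n a_n$.

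First, I would write $n = \sum_{k=1}^n 1$ and swap the order of summation to obtain
\[
\sum_{n\ge 1} n a_n = \sum_{k\ge 1} T_k, \qquad T_k := \sum_{n\ge k} a_n.
\]
As formal power series in $q$, this interchange is valid since, for each fixed power of $q$, only finitely many terms contribute (indeed $T_k$ begins at $q^{k(k-1)/2}$).

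Next I would show by induction on $k\ge 1$ that
\[
T_k = \frac{q^{k(k-1)/2}}{(-q)_{k-1}}.
\]
The base case $T_1 = \sum_{n\ge 1}\frac{q^{n(n-1)/2}}{(-q)_n} = 1$ is exactly \Cref{kang} after the reindexing $n \mapsto n-1$. For the inductive step, combine $T_{k+1} = T_k - a_k$ with the elementary identity $\frac{1}{(-q)_{k-1}} - \frac{1}{(-q)_k} = \frac{q^k}{(-q)_k}$ to obtain
\[
T_{k+1} = \frac{q^{k(k-1)/2}}{(-q)_{k-1}} - \frac{q^{k(k-1)/2}}{(-q)_k} = \frac{q^{k(k-1)/2+k}}{(-q)_k} = \frac{q^{k(k+1)/2}}{(-q)_k},
\]
which is the claimed formula at $k+1$. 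Finally, substituting this closed form and reindexing $m = k-1$ gives
\[
\sum_{k\ge 1} T_k = \sum_{k\ge 1}\frac{q^{k(k-1)/2}}{(-q)_{k-1}} = \sum_{m\ge 0}\frac{q^{m(m+1)/2}}{(-q)_m} = \sigma(q),
\]
by definition \eqref{sigma function}, which completes the proof.

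The main step is guessing the closed form $T_k = \frac{q^{k(k-1)/2}}{(-q)_{k-1}}$; it becomes apparent from a direct computation of $T_1, T_2, T_3$, after which the induction is a one-line telescoping, and \Cref{kang} handles the base case.
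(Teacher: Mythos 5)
Your proof is correct. Note that the paper does not actually prove \Cref{Donato}; it simply imports the identity from Donato's preprint (Theorem 1.1 there), so there is no internal argument to compare against. What you have written is a legitimate, self-contained replacement for that citation: writing $n=\sum_{k=1}^{n}1$ and interchanging summation reduces the left-hand side to $\sum_{k\ge1}T_k$ with $T_k=\sum_{n\ge k}\frac{q^{n(n-1)/2}}{(-q)_n}$, the interchange being justified because $T_k$ has $q$-order $\tfrac{k(k-1)}{2}$; the closed form $T_k=\frac{q^{k(k-1)/2}}{(-q)_{k-1}}$ follows by telescoping from the base case $T_1=1$ (which is exactly \Cref{kang} after the shift $n\mapsto n-1$) together with $\frac{1}{(-q)_{k-1}}-\frac{1}{(-q)_k}=\frac{q^k}{(-q)_k}$; and the final reindexing recovers $\sigma(q)$ as defined in \eqref{sigma function}. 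All steps check out, and the only external input is \Cref{kang}, which the paper already uses repeatedly. The one stylistic advantage of your route is that it makes the lemma a one-page consequence of Kang's identity rather than a black-box import; the advantage of the paper's choice is merely brevity.
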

From \cite[equation (2.3.1)]{AB}, we have the following lemma.
\begin{lemma}\label{Lostntbk}
We have, for $a\in \mathbb{C}$ and $|bq|<1$,
\[
\sum_{n\ge 0}\frac{(ab)^n q^{n^2}}{\left(aq,bq\right)_n}=1+a\sum_{n\ge 1}\frac{(bq)^n}{\left(aq\right)_n}.
\]	
\end{lemma}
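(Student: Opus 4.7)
The plan is to prove the identity by expanding both sides as formal power series in $b$ and matching coefficients. The right-hand side is already in such a form: the coefficient of $b^0$ equals $1$, and for $m\ge 1$ the coefficient of $b^m$ equals $aq^m/(aq;q)_m$. For the left-hand side, I would apply the $q$-binomial theorem to write
\[
\frac{1}{(bq;q)_n}=\sum_{k\ge 0}\frac{(q^n;q)_k}{(q;q)_k}(bq)^k,
\]
and swap the order of summation. Matching coefficients of $b^m$ and using the factorization $(aq;q)_m=(aq;q)_n(aq^{n+1};q)_{m-n}$ reduces the lemma to the polynomial identity in $a$
\[
\sum_{n=1}^{m}a^{n-1}q^{n(n-1)}\,\frac{(q^n;q)_{m-n}}{(q;q)_{m-n}}\,(aq^{n+1};q)_{m-n}=1 \qquad (m\ge 1).
\]

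To establish this reduced identity I would expand $(aq^{n+1};q)_{m-n}$ by a second application of the (finite) $q$-binomial theorem, collect powers of $a$, and regroup the two Gaussian binomial factors via the standard trinomial rearrangement. After careful tracking of the $q$-exponents, which consolidate neatly to $(n-1)(n-2)/2+\ell(\ell+3)/2$ when $\ell$ denotes the power of $a$, the coefficient of $a^\ell$ becomes a scalar multiple of $(1;q)_\ell=(1-1)(1-q)\cdots(1-q^{\ell-1})$. This vanishes for every $\ell\ge 1$ owing to the leading factor $1-1$, while the $\ell=0$ contribution is manifestly $1$. This completes the reduction, and hence the lemma.

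The main obstacle will be the bookkeeping of $q$-exponents through the two successive applications of the $q$-binomial theorem and the associated reindexing: one has to verify that the various exponents really do collapse to a single clean expression so that the vanishing of $(1;q)_\ell$ can be invoked. A potentially cleaner alternative I would also consider is to show that both sides satisfy the same $q$-difference equation in $a$ with matching initial value at $a=0$; the telescoping identity $1/(aq;q)_n-q^n/(aq^2;q)_n=(1-q^n)/(aq;q)_{n+1}$ makes such an approach feasible, though it still reduces in the end to a comparable summation.
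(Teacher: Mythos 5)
Your proposal is correct, and it necessarily differs from the paper, because the paper offers no proof of this lemma at all: it is quoted verbatim from Andrews--Berndt, \emph{Ramanujan's Lost Notebook, Part II}, equation (2.3.1). What you supply is therefore a genuinely self-contained elementary argument, and I checked that it goes through. Comparing coefficients of $b^m$ is legitimate since both sides are analytic in $b$ for $|bq|<1$ (the factor $q^{n^2}$ makes the left-hand side converge absolutely and justifies the interchange of summation after inserting $\frac{1}{(bq)_n}=\sum_{k\ge0}\frac{(q^n)_k}{(q)_k}(bq)^k$); one only needs $a\notin\{q^{-1},q^{-2},\dots\}$ so that no denominator vanishes, after which the identity extends to all $a$ by analyticity or by polynomial identity in each coefficient. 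Your reduced identity is exactly what coefficient extraction gives, since the $n=0$ term dies because $(q^0;q)_m=(1;q)_m=0$ for $m\ge1$, and $(aq)_m=(aq)_n(aq^{n+1})_{m-n}$. Expanding $(aq^{n+1})_{m-n}=\sum_{j}(-1)^jq^{\frac{j(j-1)}{2}+(n+1)j}\frac{(q)_{m-n}}{(q)_j(q)_{m-n-j}}a^j$ and using $\frac{(q)_{m-1}}{(q)_{n-1}(q)_{m-n}}\cdot\frac{(q)_{m-n}}{(q)_{\ell-n+1}(q)_{m-\ell-1}}=\frac{(q)_{m-1}}{(q)_{\ell}(q)_{m-\ell-1}}\cdot\frac{(q)_{\ell}}{(q)_{n-1}(q)_{\ell-n+1}}$, the total $q$-exponent does collapse to $\frac{(n-1)(n-2)}{2}+\frac{\ell(\ell+3)}{2}$ as you assert (I verified this computation), so the coefficient of $a^\ell$ becomes $(-1)^{\ell}q^{\frac{\ell(\ell+3)}{2}}\frac{(q)_{m-1}}{(q)_\ell (q)_{m-\ell-1}}\,(1;q)_\ell$ by the finite $q$-binomial theorem evaluated at $t=1$; this is $1$ for $\ell=0$ and vanishes for $\ell\ge1$. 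The bookkeeping you flag as the main obstacle is real but it does work out, so to turn the sketch into a complete proof you only need to write out that exponent computation and state the convergence/interchange justification explicitly. What the paper's route buys is brevity; what yours buys is independence from the Lost Notebook literature.
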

From \cite[Corollary 2.4]{Andbook}, we record the following identity.
\begin{lemma}\label{Heine}
For $|c|<|ab|, |q|<1$, we have 
\[
\sum_{n\ge 0}\frac{(a,b)_n\left(\frac{c}{ab}\right)^n}{(q,c)_n}=\frac{\left(\frac{c}{a}, \frac{c}{b}\right)_{\infty}}{\left(c, \frac{c}{ab}\right)_{\infty}}.
\]
\end{lemma}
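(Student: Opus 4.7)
The plan is to derive this $q$-Gauss sum as a specialization of Heine's first transformation, namely
\[
\sum_{n\ge 0}\frac{(a,b)_n z^n}{(q,c)_n} = \frac{(b, az)_\infty}{(c, z)_\infty}\sum_{n\ge 0}\frac{(c/b, z)_n b^n}{(q, az)_n},
\]
which holds for $|z|, |b| < 1$. Taking this as a known transformation, I would specialize $z = c/(ab)$, so that $az = c/b$. The right-hand series then collapses because the factor $(c/b)_n$ in the numerator cancels against $(az)_n = (c/b)_n$ in the denominator, reducing the sum to
\[
\sum_{n\ge 0}\frac{(c/(ab))_n b^n}{(q)_n}.
\]

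Next I would evaluate this remaining sum by the $q$-binomial theorem $\sum_{n\ge 0}\frac{(\alpha)_n t^n}{(q)_n}=\frac{(\alpha t)_\infty}{(t)_\infty}$, which generalizes the first identity in \Cref{Euler} (and can be obtained from it by the standard functional-equation argument or by applying Heine's transformation to a degenerate ${}_2\phi_1$). With $\alpha = c/(ab)$ and $t=b$, this yields $\frac{(c/a)_\infty}{(b)_\infty}$. Combining the two factors gives
\[
\frac{(b, c/b)_\infty}{(c, c/(ab))_\infty}\cdot\frac{(c/a)_\infty}{(b)_\infty} = \frac{(c/a, c/b)_\infty}{(c, c/(ab))_\infty},
\]
which is precisely the claimed identity. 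The convergence hypothesis $|c|<|ab|$ (equivalently $|c/(ab)|<1$) ensures that Heine's transformation and the subsequent ${}_1\phi_0$ sum both converge.

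The main obstacle, if one wishes to be entirely self-contained, is establishing Heine's transformation itself. The standard derivation expands one of the shifted factorials, say $(b)_n$, using the $q$-binomial theorem, interchanges the order of the resulting double summation, and re-sums the inner series via the $q$-binomial theorem a second time. This is routine but is the step that carries all of the analytic content; once it is in hand, the specialization above is a short calculation.
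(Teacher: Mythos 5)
Your derivation is correct: specializing Heine's transformation at $z=c/(ab)$ so that the $(c/b)_n$ factors cancel, and then summing the remaining ${}_1\phi_0$ by the $q$-binomial theorem, gives exactly the stated $q$-Gauss sum (with the restriction $|b|<1$ removed by analytic continuation). The paper does not prove this lemma at all — it simply cites \cite[Corollary 2.4]{Andbook} — and the proof given there is precisely the argument you outline, so your proposal matches the intended source.
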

From \cite[Example 10, p. 29]{Andbook}, we record the following identity.
\begin{lemma}\label{Andbook}
We have
\begin{equation*}%\label{}
1+\sum_{n\ge 1}(-1)^n\left(\zeta^{3n-1}q^{\frac{n(3n-1)}{2}}+\zeta^{3n}q^{\frac{n(3n+1)}{2}}\right)=\sum_{n\ge 0}\frac{(-1)^n\zeta^{2n}q^{\frac{n(n+1)}{2}}}{\left(\zeta q\right)_n}.
\end{equation*}	
\end{lemma}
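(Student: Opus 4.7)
My plan is to recognize both sides as the same partial theta series by first reorganizing the left-hand side and then applying a Rogers--Fine type transformation to the right-hand side.

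First, I would rewrite the left-hand side by sorting terms according to the exponent of $\zeta$. The powers of $\zeta$ that appear are of the form $3k$ (from the second inner sum at index $n=k$, together with the leading $1$ for $k=0$) and $3k+2$ (from the first inner sum at index $n=k+1$); no powers $\zeta^{3k+1}$ occur. A direct comparison of coefficients shows
\[
1 + \sum_{n\ge 1}(-1)^n\bigl(\zeta^{3n-1} q^{\frac{n(3n-1)}{2}} + \zeta^{3n} q^{\frac{n(3n+1)}{2}}\bigr) = \sum_{k\ge 0}(-1)^k \zeta^{3k} q^{\frac{k(3k+1)}{2}}\bigl(1-\zeta^2 q^{2k+1}\bigr),
\]
since $(k+1)(3k+2)/2 - k(3k+1)/2 = 2k+1$. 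The right-hand side of this identity is a partial theta series of the shape that Rogers--Fine produces.

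Next, I would apply the Rogers--Fine transformation to the right-hand side of the lemma. Writing the factor $(-1)^n\zeta^{2n}q^{n(n+1)/2}$ as the limit $\lim_{a\to\infty}(aq;q)_n(\zeta^2/a)^n$, the target sum becomes the $a\to\infty$ limit of a $_2\phi_1$ with numerator $(aq;q)_n$ and denominator $(\zeta q;q)_n$. The Rogers--Fine transformation rewrites such a ratio as a series carrying a quadratic weight $q^{n^2}$ together with a distinctive factor of the form $(1-\alpha\tau q^{2n+1})$. Choosing the parameters so that the induced numerator Pochhammer is $(\zeta q;q)_n$ (cancelling the denominator), the surviving factors are exactly $(-1)^n\zeta^{3n}q^{n(3n+1)/2}(1-\zeta^2 q^{2n+1})$, which matches the reformulated left-hand side term by term. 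As a sanity check, specialising $\zeta=1$ collapses the identity to Euler's pentagonal number theorem together with the Euler identity $\sum_{n\ge 0}(-1)^n q^{n(n+1)/2}/(q;q)_n=(q;q)_\infty$ (which follows from \Cref{Euler} with $t=-q$).

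The main obstacle is to execute the Rogers--Fine transformation cleanly inside the limit $a\to\infty$: one needs to verify that the limit commutes with the sum, and that after taking the limit the Pochhammer factors simplify to leave only the bare powers of $\zeta$ and the factor $(1-\zeta^2 q^{2n+1})$. A safer bypass would be to show instead that both sides satisfy the same first-order $q$-difference equation in $\zeta$. On the right, one splits off the $n=0$ term, factors $(1-\zeta q)$ out of $(\zeta q;q)_n$, and re-indexes to relate the sum to its image under $\zeta\mapsto\zeta q$ multiplied by an explicit rational factor; on the left, the analogous recursion is obtained by shifting $n$ in the two sums and combining. Agreement at $\zeta=0$ (both sides equal $1$) then forces equality for all $\zeta$.
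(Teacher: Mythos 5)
The paper does not actually prove this lemma: it is quoted verbatim from \cite[Example 10, p.~29]{Andbook}, so there is no internal proof to compare against. Your proposal supplies a genuine proof, and its two key computations are correct. The regrouping of the left-hand side as $\sum_{k\ge 0}(-1)^k\zeta^{3k}q^{\frac{k(3k+1)}{2}}\left(1-\zeta^2q^{2k+1}\right)$ checks out (the shift $n=k+1$ in the $\zeta^{3n-1}$-sum does produce exponent $\frac{n(3n-1)}{2}$, since $\frac{(k+1)(3k+2)}{2}-\frac{k(3k+1)}{2}=2k+1$). On the other side, taking $\alpha=aq$, $\beta=\zeta q$, $t=\frac{\zeta^2}{a}$ in \Cref{RF} gives $\frac{\alpha tq}{\beta}=\zeta q$, so the numerator Pochhammer cancels $(\beta)_n$, while $1-\alpha tq^{2n}=1-\zeta^2q^{2n+1}$, $\left(\frac{\beta t}{q}\right)^nq^{n^2}(\alpha)_n\to(-1)^n\zeta^{3n}q^{\frac{n(3n+1)}{2}}$, and $(t)_{n+1}\to 1$ as $a\to\infty$; this is exactly your reorganized left-hand side. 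The only point you correctly flag as needing care is the interchange of $\lim_{a\to\infty}$ with the sums; this is routine here (for $|1/a|$ small the terms are dominated uniformly by a convergent series, or one can invoke the partial-theta limiting case of Rogers--Fine recorded in Fine's book directly and avoid the limit altogether). Your fallback via the $q$-difference equation $\left(1-\zeta q\right)S(\zeta)=1-\zeta q-\zeta^2qS(\zeta q)$ is also sound, since iterating the recursion pins down the power series uniquely. In short: where the paper cites, you prove, and the proof is correct.
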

We also require identities by Bhoria, Eyyunni, and Maji. The first is (see \cite[Theorem 2.1]{BEM}).
\begin{lemma}\label{BEMeqn}
We have, for $|ad|$, $|cq|<1$, 
\begin{align*}
\sum_{n\ge1}\frac{\left(\frac{b}{a},\frac{c}{d}\right)_n (ad)^n}{(b,cq)_n} =\frac{(a-b)(d-c)}{ad-b}\sum_{n\ge0}\frac{\left(a,\frac{bd}{c}\right)_nc^n}{(b,ad)_n}\left(\frac{adq^n}{1-adq^n}-\frac{bq^n}{1-bq^n}\right).
\end{align*}
\end{lemma}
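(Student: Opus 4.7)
My plan is to reduce both sides of \Cref{BEMeqn} to a common non-hypergeometric form and then attack the reduced identity by a partial-fraction/Heine-type argument. The first move is the elementary partial-fraction identity
\[
\frac{adq^n}{1-adq^n}-\frac{bq^n}{1-bq^n}=\frac{(ad-b)q^n}{(1-adq^n)(1-bq^n)},
\]
which cancels the prefactor $\tfrac{1}{ad-b}$ and rewrites the RHS as
\[
\frac{(a-b)(d-c)}{(1-b)(1-ad)}\sum_{n\ge 0}\frac{(a,\,bd/c;q)_n\,c^n q^n}{(bq,\,adq;q)_n}.
\]
On the LHS I would factor $(b/a;q)_n=(1-b/a)(bq/a;q)_{n-1}$ and $(c/d;q)_n=(1-c/d)(cq/d;q)_{n-1}$, shift $n\mapsto m+1$, and apply $(1-b/a)(1-c/d)=(a-b)(d-c)/(ad)$ together with $(b;q)_{m+1}=(1-b)(bq;q)_m$ and $(cq;q)_{m+1}=(1-cq)(cq^2;q)_m$ to obtain
\[
\mathrm{LHS}=\frac{(a-b)(d-c)}{(1-b)(1-cq)}\sum_{m\ge 0}\frac{(bq/a,\,cq/d;q)_m (ad)^m}{(bq,\,cq^2;q)_m}.
\]
After cancelling the common factor $\tfrac{(a-b)(d-c)}{1-b}$ from both sides, the claim reduces to the symmetric statement
\[
\frac{1}{1-cq}\sum_{m\ge 0}\frac{(bq/a,\,cq/d;q)_m (ad)^m}{(bq,\,cq^2;q)_m}=\frac{1}{1-ad}\sum_{n\ge 0}\frac{(a,\,bd/c;q)_n c^n q^n}{(bq,\,adq;q)_n}.
\]

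Second, I would attack the reduced identity by a double-sum manipulation. Each side has the form $\sum_n \frac{(\alpha,\beta;q)_n z^n}{(\gamma,\delta;q)_n}$ with no $(q;q)_n$ in the denominator, so the classical basic-hypergeometric toolbox does not apply directly. The plan is to decompose $\frac{1}{(bq,\,adq;q)_n}$ (respectively $\frac{1}{(bq,\,cq^2;q)_n}$) by partial fractions into a sum of geometric series in the parameters, swap the order of summation, and evaluate the inner sums via a $q$-Gauss-type identity such as \Cref{Heine}. As a sanity check, both sides collapse in the classical limit $q\to 1$ to the rational identity $(1-b)(1-c)-(a-b)(d-c)=(1-b)(1-ad)-(1-a)(c-bd)$, which is verified by a direct expansion.

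The main obstacle, and what makes the identity genuinely non-routine, is precisely the missing $(q;q)_n$ in each denominator: Heine's transformations and the Bailey lemma do not apply off the shelf, and one must either regularize by a limiting parameter, pass to a bilateral $q$-series, or run the $q$-WZ algorithm to produce a telescoping certificate mechanically. A backup route, should the direct double-sum approach prove tangled, is to view both sides of the reduced identity as meromorphic functions of one parameter (say $a$), identify the common pole locus and the growth at $a\to\infty$, and close the argument by matching residues pole-by-pole.
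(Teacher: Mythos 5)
Your opening reduction is correct and easy to verify: the partial--fraction step
\[
\frac{adq^n}{1-adq^n}-\frac{bq^n}{1-bq^n}=\frac{(ad-b)q^n}{(1-adq^n)(1-bq^n)},
\]
the absorption of the two linear factors into $(b)_{n+1}(ad)_{n+1}$, and the shift $n\mapsto m+1$ on the left all check out, and the $q\to1$ sanity check is also right. But the reduction is essentially cosmetic: it only strips off the trivial first factors of each Pochhammer symbol, and the entire content of \Cref{BEMeqn} survives intact in the ``reduced symmetric statement''
\[
\frac{1}{1-cq}\sum_{m\ge 0}\frac{\left(\frac{bq}{a},\frac{cq}{d}\right)_m (ad)^m}{\left(bq,cq^2\right)_m}=\frac{1}{1-ad}\sum_{n\ge 0}\frac{\left(a,\frac{bd}{c}\right)_n (cq)^n}{\left(bq,adq\right)_n},
\]
which is a genuinely nontrivial transformation between two Rogers--Fine-type series with an extra numerator and denominator parameter on each side. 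At this point you do not prove it: you list four or five candidate strategies (partial fractions plus a $q$-Gauss evaluation, regularization by a limiting parameter, bilateralization, $q$-WZ, residue matching in $a$) and, as you yourself concede, none of the off-the-shelf tools (\Cref{Heine}, Bailey's lemma) applies because of the missing $(q)_n$. A list of possible attacks, one of which might work, is not a proof; the step that carries all the difficulty is left entirely open. The proposed backup via meromorphic continuation is also underdeveloped --- you would need to exhibit the full pole set in $a$ of both sides, compute and match all residues, and control the behavior as $a\to\infty$, none of which is even sketched.

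For what it is worth, the paper itself offers no proof to compare against: \Cref{BEMeqn} is quoted verbatim from Bhoria--Eyyunni--Maji \cite[Theorem 2.1]{BEM}, where it is derived from a three-parameter $q$-series identity of Ramanujan/Andrews rather than by any of the routes you propose. If you want a self-contained argument, the most promising of your listed options is to prove the reduced identity by exhibiting an explicit telescoping certificate (your $q$-WZ suggestion) or by deriving it as a limit of a known ${}_3\phi_2$ transformation after inserting a regularizing parameter; either way, that derivation must actually be carried out before the lemma can be considered proved.
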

\noindent The second identity (see \cite[Theorem 2.2]{BEM}) is the following:
\begin{lemma}\label{BEMeqn1}
For $|cq|<1$,
\[
\sum_{n\ge 1}\frac{\left(\frac{c}{d}\right)_n\left(-\z d\right)^nq^{\frac{n(n+1)}{2}}}{\left(\z q,cq\right)_n}=\frac{\z \left(c-d\right)}{c}\sum_{n \ge 1}\frac{\left(\frac{\z dq}{c}\right)_{n-1}\left(cq\right)^n}{\left(\z q\right)_n}.
\]	
\end{lemma}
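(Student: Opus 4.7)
The plan is to derive \Cref{BEMeqn1} as a direct specialization of \Cref{BEMeqn}, obtained by setting $a=0$ and $b=\z q$ and taking the limit in $a$.

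On the left-hand side of \Cref{BEMeqn}, the key computation is
\[
(b/a)_n\, a^n \;=\; \prod_{k=0}^{n-1}(a-bq^k) \;\xrightarrow{a\to 0}\; (-b)^n q^{\frac{n(n-1)}{2}}.
\]
Combining with the remaining factor $d^n$ from $(ad)^n = a^n d^n$ and substituting $b=\z q$ collapses $(b/a, c/d)_n (ad)^n$ to $(c/d)_n (-\z d)^n q^{n(n+1)/2}$, since $q^n \cdot q^{n(n-1)/2} = q^{n(n+1)/2}$. Thus the summand matches the LHS of \Cref{BEMeqn1} exactly.

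On the right-hand side of \Cref{BEMeqn}, at $a=0$ the prefactor $(a-b)(d-c)/(ad-b)$ simplifies to $(d-c)$, the Pochhammer symbols $(a)_n$ and $(ad)_n$ both equal $1$, and $adq^n/(1-adq^n)$ vanishes. The summand therefore reduces to
\[
-\frac{(bd/c)_n\, c^n\, bq^n}{(b)_n(1-bq^n)} \;=\; -\frac{(bd/c)_n\, b\,(cq)^n}{(b)_{n+1}},
\]
where I used $(b)_n(1-bq^n) = (b)_{n+1}$. Multiplying by $(d-c)$, reindexing $m=n+1$, and substituting $b=\z q$ produces exactly the RHS $\frac{\z(c-d)}{c}\sum_{m\ge 1}\frac{(\z dq/c)_{m-1}(cq)^m}{(\z q)_m}$ of \Cref{BEMeqn1}.

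The only subtlety, which I expect to be the sole but entirely routine obstacle, is justifying that the limit $a\to 0$ may be taken term by term on both sides of \Cref{BEMeqn}. Since the hypothesis $|cq|<1$ ensures uniform convergence of both series in a neighborhood of $a=0$, and since none of the denominators $(b, cq, ad)_n$ develop a pole there, both sides are continuous (indeed holomorphic) in $a$ at $a=0$, and the identity persists in the limit.
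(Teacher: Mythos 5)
Your derivation is correct, but note that the paper itself offers no proof of this lemma: it is imported verbatim as \cite[Theorem 2.2]{BEM}, so there is no internal argument to compare against. What you have done is show that Lemma \ref{BEMeqn1} is the $a\to 0$, $b=\z q$ specialization of Lemma \ref{BEMeqn} (itself quoted from \cite[Theorem 2.1]{BEM}), which is a clean, self-contained reduction within the toolkit the paper already assumes. I checked the algebra: $(b/a)_n\,a^n=\prod_{k=0}^{n-1}(a-bq^k)\to(-b)^nq^{\frac{n(n-1)}{2}}$, the extra $q^n$ from $b=\z q$ upgrades the exponent to $\frac{n(n+1)}{2}$, the prefactor $\frac{(a-b)(d-c)}{ad-b}\to d-c$, the $a$-dependent Pochhammer symbols and the $\frac{adq^n}{1-adq^n}$ term drop out, and the shift $m=n+1$ together with $(cq)^{m-1}=(cq)^m/(cq)$ produces exactly the prefactor $\frac{\z(c-d)}{c}$. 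Two small points of precision: the uniform convergence near $a=0$ is governed by $|ad|$ on the left-hand side (automatically small) and by $|cq|$ on the right (where the $q^n$ inside the bracket combines with $c^n$), not by $|cq|$ alone on both sides as you phrase it; and the cleanest way to see that the termwise limit is legitimate is your own observation that $(b/a)_n\,a^n$ is a polynomial in $a$, so every term is entire in $a$ and the series are dominated by convergent geometric series on a small disc $|a|\le\varepsilon$. Neither point affects the validity of the argument.
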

Next, we list the following identity from  \cite[equation (3.2.6)]{Berndt}.
\begin{lemma}\label{AGLeqn}
We have 
\begin{align*}
1+4\sum_{n\ge0}\frac{(-1)^{n+1}q^{2n+1}}{1+q^{2n+1}}=\frac{(q)^2_{\infty}}{(-q)^2_{\infty}}.%\frac{1}{4}\left(\left(\sum_{n\in\mathbb{Z}}(-1)^nq^{n^2}\right)^2-1\right).%=\frac{1}{4}\left(\frac{(q;q)_{\infty}^2}{(-q;q)_{\infty}^{2}}-1\right),
\end{align*}	
\end{lemma}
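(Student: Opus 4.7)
The plan is to recognize that both sides of the claimed identity equal $\Theta(-q)^2$, where $\Theta(-q)$ is the theta function from \eqref{Ramtheta}.

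First I would handle the right-hand side. The key infinite-product manipulation is
\[
(q^2;q^2)_\infty = (q)_\infty\,(-q)_\infty,
\]
which follows from the factorization $1-q^{2n} = (1-q^n)(1+q^n)$. Squaring and rearranging gives
\[
\frac{(q)^2_\infty}{(-q)^2_\infty} = \frac{(q)^4_\infty}{(q^2;q^2)^2_\infty} = \left(\frac{(q)^2_\infty}{(q^2;q^2)_\infty}\right)^{\!2} = \Theta(-q)^2,
\]
where the last equality uses \eqref{Ramtheta}. So the right-hand side is identified immediately.

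For the left-hand side I would invoke the classical Jacobi two-square theorem, which states that with $\theta_3(q):=\sum_{n\in\mathbb{Z}}q^{n^2}$,
\[
\theta_3(q)^2 = 1 + 4\sum_{n\ge 0}\frac{(-1)^n q^{2n+1}}{1-q^{2n+1}}.
\]
This Lambert-series identity encodes the formula $r_2(N)=4\sum_{d\mid N,\,d\text{ odd}}(-1)^{(d-1)/2}$. Substituting $q\mapsto -q$ and using $\theta_3(-q)=\sum_{n\in\mathbb{Z}}(-1)^{n^2}q^{n^2}=\sum_{n\in\mathbb{Z}}(-1)^n q^{n^2}=\Theta(-q)$, together with $1-(-q)^{2n+1}=1+q^{2n+1}$ and $(-q)^{2n+1}=-q^{2n+1}$, transforms the sum into $1+4\sum_{n\ge 0}\tfrac{(-1)^{n+1}q^{2n+1}}{1+q^{2n+1}}$, which is exactly the left-hand side. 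Combining the two identifications finishes the proof.

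The main obstacle is really just invoking the Jacobi two-square theorem, which is the classical input being repackaged here; alternative routes would be a direct residue calculation via partial fractions applied to a Jacobi triple-product expansion, or a logarithmic-derivative argument on $\Theta(-q)$, but these are longer and less transparent than the substitution $q\mapsto -q$ in the two-square formula.
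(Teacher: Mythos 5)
Your proposal is correct. Note that the paper does not prove this lemma at all: it is imported verbatim from Berndt's \emph{Number theory in the spirit of Ramanujan} (equation (3.2.6) there), so there is no internal argument to compare against. Your derivation is a clean reconstruction of that classical fact: the product identity $(q^2;q^2)_\infty=(q)_\infty(-q)_\infty$ correctly reduces the right-hand side to $\Theta(-q)^2$ via \eqref{Ramtheta}, and the substitution $q\mapsto -q$ in the Lambert-series form of Jacobi's two-square theorem, together with $(-1)^{n^2}=(-1)^n$, produces exactly the left-hand side. The only caveat is that you are invoking the two-square theorem as a black box, but since the paper itself treats the whole identity as a citation, this is entirely consistent with the level of rigor intended here.
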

Using \cite[equation (4.5)]{Agarwal} with $q\mapsto q^2, \a= -q, \b= -\z q^3$, and $t= -\z q^2$, we get the following. 
 \begin{lemma}\label{Agarwal}
 We have 
 \begin{align*}
 &\sum_{n\ge 0}\frac{\left(-q;q^2\right)_n(-1)^n\z^nq^{2n}}{\left(-\z q^3;q^2\right)_n}%\!=\!\frac{\left(\z q^2, q^2, \z q^3, \frac{1}{\z q};q^2\right)_{\infty}}{\left(-q,-\z q^3,-\z q^2,-1;q^2\right)_{\infty}}\!+\!\frac{1}{2}\left(1+\frac{1}{\z q}\right)\sum_{n\ge 0}\frac{\left(q^2,-\frac{1}{\z};q^2\right)_n(-1)^n q^{n}}{\left(q^2,-q^2;q^2\right)_n}\nonumber\\
 %&\hspace{2.5 cm}
 =\frac{\left(q,\z q^2\right)_{\infty}\left(\frac{1}{\z q};q^2\right)_{\infty}}{2\left(-\z q^2\right)_{\infty}}%\!\frac{\left(\z q^2, q^2, \z q^3, \frac{1}{\z q};q^2\right)_{\infty}}{2\left(-q,-\z q^3,-\z q^2,-q^2;q^2\right)_{\infty}}\!
 +\!\frac{1}{2}\left(1+\frac{1}{\z q}\right)\sum_{n\ge 0}\frac{\left(-\frac{1}{\z};q^2\right)_n(-1)^n q^{n}}{\left(-q^2;q^2\right)_n}.
 \end{align*}	
 \end{lemma}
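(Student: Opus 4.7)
The plan is to derive the identity as a direct specialization of Agarwal's equation~(4.5) in \cite{Agarwal}, which is a three-term transformation for a basic hypergeometric series in parameters $\alpha,\beta,t$ with base $q$. Given the shape of the claimed right-hand side, with an infinite product of theta type together with a companion ${}_2\phi_1$-like sum, Agarwal's identity should have the schematic form
\[
\sum_{n\ge 0}\frac{(\alpha;q)_n\,t^n}{(\beta;q)_n}=\frac{\Pi(\alpha,\beta,t;q)}{C(\alpha,\beta,t;q)}+D(\alpha,\beta,t)\sum_{n\ge 0}\frac{(\alpha';q)_n\,(t')^n}{(\beta';q)_n},
\]
where $\Pi$ is an eta-quotient style infinite product and the transformed parameters on the right are rational functions of the original ones.

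First I would perform the stated substitution $q\mapsto q^2$, $\alpha=-q$, $\beta=-\z q^3$, $t=-\z q^2$ inside the left-hand sum and verify directly that
\[
\frac{(-q;q^2)_n(-\z q^2)^n}{(-\z q^3;q^2)_n}=\frac{(-q;q^2)_n(-1)^n\z^n q^{2n}}{(-\z q^3;q^2)_n},
\]
so that the LHS matches the one in the statement.

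Next, I would substitute into the product term on Agarwal's right-hand side. The expected output is $\frac{(q,\z q^2)_\infty(1/(\z q);q^2)_\infty}{2(-\z q^2;q^2)_\infty}$; the factor $\tfrac12$ should emerge naturally from the constants $C(\alpha,\beta,t;q)$ under the chosen specialization, and the three $q^2$-Pochhammer symbols in the numerator and the single one in the denominator should appear from factoring the relevant $(cq^n)_\infty$-type products after the substitution. The key verifications are the bookkeeping of signs introduced by $\alpha=-q$, $\beta=-\z q^3$, and the simplification of $\bigl(\tfrac{t}{\alpha}\bigr)$, $\bigl(\tfrac{\alpha\beta}{t}\bigr)$-type arguments to the particular combinations $q$, $\z q^2$, and $1/(\z q)$.

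Finally, I would check that the second sum on Agarwal's right-hand side reduces to $\tfrac12(1+\tfrac{1}{\z q})\sum_{n\ge 0}\frac{(-1/\z;q^2)_n(-1)^n q^n}{(-q^2;q^2)_n}$. This requires identifying the transformed parameters: the new Pochhammer base $(-1/\z;q^2)_n$ arises from $(\beta/\alpha;q^2)_n$-type arguments becoming $(-\z q^3/(-q);q^2)_n=(\z q^2;q^2)_n$ or $(-1/\z;q^2)_n$ depending on Agarwal's exact normalization, and the transformed $t$ yields the summand $(-1)^n q^n$. The external factor $\tfrac12(1+\tfrac{1}{\z q})$ should come from the prefactor $D(\alpha,\beta,t)$ evaluated at our parameters.

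The only real obstacle is the purely algebraic one of getting Agarwal's normalization exactly right: tracking the $\tfrac12$ factors, the power of $q$ absorbed by reindexing, and whether the roles of $\alpha$ and $\beta$ in his identity match my schematic above. No hard analytic input is needed — once the substitution is carried out carefully and the resulting Pochhammer symbols and eta-quotient are simplified, the lemma follows immediately.
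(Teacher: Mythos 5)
Your proposal takes exactly the paper's route: the lemma is obtained by specializing Agarwal's equation (4.5) with $q\mapsto q^2$, $\alpha=-q$, $\beta=-\zeta q^3$, $t=-\zeta q^2$, which is precisely the substitution the paper itself states, with no further detail given. Your plan of carrying out that substitution and simplifying the resulting Pochhammer products is therefore essentially the paper's proof (just take care that the denominator product in the statement is $(-\zeta q^2;q)_\infty$, base $q$, not base $q^2$).
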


Finally, we require the Rogers--Fine identity \cite[equation (14.1)]{Fine}.
\begin{lemma}\label{RF}
 We have 
\[
\sum_{n\ge 0}\frac{\left(\a\right)_nt^n}{\left(\b\right)_n}=\sum_{n\ge 0}\frac{\left(\a, \frac{\a tq}{\b}\right)_n\left(1-\a tq^{2n}\right)\left(\frac{\b t}{q}\right)^n q^{n^2}}{\left(\b\right)_n\left(t\right)_{n+1}}.
\]	
\end{lemma}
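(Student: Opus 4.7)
My plan is to prove this classical Rogers--Fine identity by showing that both sides satisfy the same three-term $q$-difference recursion in the variable $t$ with matching initial condition; uniqueness of the formal power-series solution then forces equality.

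\textbf{Step 1.} Denote the LHS by $F(\alpha,\beta;t)$. Telescoping the coefficient of $t^n$ in $(1-t)F(\alpha,\beta;t)$ via the elementary identity
\[
\frac{(\alpha)_n}{(\beta)_n}-\frac{(\alpha)_{n-1}}{(\beta)_{n-1}}=\frac{(\beta-\alpha)q^{n-1}(\alpha)_{n-1}}{(\beta)_n},
\]
and reindexing, I obtain the functional equation
\[
(1-t)F(\alpha,\beta;t)=1+\frac{(\beta-\alpha)t}{1-\beta}\,F(\alpha,\beta q;tq).
\]
Together with $F(\alpha,\beta;0)=1$, this recursion determines $F$ uniquely as a formal power series in $t$, since each $t$-coefficient is produced inductively from lower-order ones.

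\textbf{Step 2.} Let $G(\alpha,\beta;t)$ denote the RHS. The heart of the argument is to verify that $G$ satisfies exactly the same recursion. The key algebraic device is the splitting
\[
1-\alpha tq^{2n}=(1-tq^n)+tq^n(1-\alpha q^n),
\]
which decomposes the summand of $(1-t)G(\alpha,\beta;t)=\sum_{n\ge 0}\frac{(\alpha)_n(\alpha tq/\beta)_n(1-\alpha tq^{2n})(\beta t/q)^n q^{n^2}}{(\beta)_n(tq)_n}$ into two pieces. In the first piece the factor $(1-tq^n)$ cancels a factor of the denominator $(tq)_n$, and the remaining series telescopes (using $(\alpha)_{n+1}=(1-\alpha q^n)(\alpha)_n$ and Jacobi-triple-product-style pairing of consecutive terms) to the single constant $1$. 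In the second piece, after shifting $n\mapsto n-1$ and matching Pochhammer symbols via $(\alpha tq/\beta)_{n-1}\cdot(1-\alpha q^n)=(\alpha)$-manipulations, one recognizes exactly $\frac{(\beta-\alpha)t}{1-\beta}\,G(\alpha,\beta q;tq)$, where the new parameters $(\alpha,\beta q,tq)$ enter through $(\alpha tq/\beta)_n=(\alpha \cdot tq \cdot q/(\beta q))_n$ and $(\beta t/q)^n\to (\beta q\cdot tq/q)^n$ up to the prefactor $\beta t$.

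\textbf{Step 3.} Finally, $G(\alpha,\beta;0)=1$ is immediate since only the $n=0$ summand survives at $t=0$, contributing $(1-\alpha\cdot 0)/(1-0)=1$. Combined with Step~2, this yields $F=G$ as formal power series in $t$, and hence as analytic functions in the region of convergence, proving the identity.

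The main obstacle is the bookkeeping in Step~2: making the two pieces of the split produce precisely $1$ and $\frac{(\beta-\alpha)t}{1-\beta}G(\alpha,\beta q;tq)$ respectively requires delicate tracking of how $(\alpha tq/\beta)_n$, $(\beta t/q)^n$, $q^{n^2}$, and $(t)_{n+1}$ interact under both the parameter shift $(\beta,t)\mapsto(\beta q,tq)$ and the index shift $n\mapsto n-1$. An alternative path, should the direct split become too unwieldy, is to iterate the recursion in Step~1 to obtain a simpler intermediate form $\sum_{k\ge 0}\frac{(-\alpha t)^k(\beta/\alpha)_k q^{k(k-1)/2}}{(t)_{k+1}(\beta)_k}$, and then transform this into the Rogers--Fine form using pairwise regrouping guided by the telescoping identity $\sum_{n\ge 0}(1-tq^{2n+1})t^n q^{n^2}=1$.
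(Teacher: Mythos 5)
The paper does not actually prove this lemma; it quotes the Rogers--Fine identity directly from Fine's book \cite[equation (14.1)]{Fine}, so the comparison here is with your argument on its own terms. Your overall strategy --- show both sides satisfy the contraction $(1-t)H(\alpha,\beta;t)=1+\frac{(\beta-\alpha)t}{1-\beta}H(\alpha,\beta q;tq)$ with $H(\alpha,\beta;0)=1$ and invoke uniqueness of the power-series solution --- is a legitimate classical route, and your Step 1 (the functional equation for the left-hand side and the uniqueness argument) is correct.

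The genuine gap is Step 2, which carries the entire content of the identity and which you do not carry out; moreover the roadmap you give for it is false as stated. Writing $(1-t)G=A+B$ according to the split $1-\alpha tq^{2n}=(1-tq^n)+tq^n(1-\alpha q^n)$, the first piece $A$ does \emph{not} telescope to $1$: at $n=0$ there is no factor of $(tq)_n$ for $(1-tq^0)$ to cancel, and one computes $A=1+\bigl(\frac{(1-\alpha)\beta}{1-\beta}-1\bigr)t+O(t^2)$, which is not $1$ for generic $\alpha,\beta$. Likewise $B=(1-\alpha)t+O(t^2)$, whereas $\frac{(\beta-\alpha)t}{1-\beta}G(\alpha,\beta q;tq)=\frac{\beta-\alpha}{1-\beta}t+O(t^2)$, and $1-\alpha\neq\frac{\beta-\alpha}{1-\beta}$ in general; only the sum $A+B$ has the correct expansion. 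After recombining the two pieces across shifted indices, the functional equation for $G$ reduces to the still nontrivial claim
\begin{equation*}
\sum_{n\ge 0}\frac{(\alpha)_n\left(\frac{\alpha tq}{\beta}\right)_n\left(1-\alpha tq^{2n+1}\right)(\beta t)^nq^{n^2}}{\left(\beta q\right)_n\left(tq\right)_{n+1}}\Bigl[\left(1-\beta q^n\right)\left(1-tq^{n+1}\right)+(\alpha-\beta)tq^{2n+1}\Bigr]=1-\beta,
\end{equation*}
which requires a further genuine telescoping that you have not supplied. The alternative path you mention (iterating Step 1 to reach $\sum_{k\ge0}\frac{(\beta/\alpha)_k(-\alpha t)^kq^{k(k-1)/2}}{(\beta)_k(t)_{k+1}}$, which is indeed what the iteration yields) is likewise only a pointer: the passage from that intermediate series to the Rogers--Fine form is exactly the hard part of Fine's derivation and is not accomplished by the one-line identity $\sum_{n\ge0}(1-tq^{2n+1})t^nq^{n^2}=1$. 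As written, the proposal establishes the easy half and leaves the essential computation unverified.
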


\subsection{A Tauberian theorem} We also require a modified version of Ingham's Tauberian Theorem \cite{I}, stated in \cite[Theorem 1.1]{BJM}.  %We compute the main terms in the relevant asymptotic expansions using the Theorem 1.1 of \cite{BJM}, which follows from Ingham's Tauberian Theorem \cite{I},
%for the special case $\a=0$.
For $\Delta\geq0$ define
\begin{equation*}
R_\Delta := \{x+iy:\, x,y\in\R,\, x>0,\, |y|\le\Delta x\}.
\end{equation*}	
\begin{proposition}\label{P:CorToIngham}
	Let $B(q)=\sum_{n\ge0}b(n)q^n$ be a power series whose radius of convergence is at least one and assume that the $b(n)$ are non-negative and weakly increasing.
	Also suppose that $\l$, $\b$, $\g\in\R$ with $\g>0$ exist such that%\footnote{The second condition in \eqref{E:as1} is often dropped in literature, however in its absence there are counterexamples to the conclusion of the proposition as discussed in \cite{BJM}.}
	\begin{equation}\label{E:as1}
	B\left(e^{-t}\right) \sim \l t^\b e^\frac\g t \quad\text{as } t \to 0^+,\qquad B\left(e^{-z}\right) \ll |z|^\b e^\frac{\g}{|z|} \quad\text{as } z \to 0,
	\end{equation}
	with the latter condition holding in each region $R_\Delta$
	for $\Delta \geq 0$. Then we have
	\[
	b(n) \sim \frac{\l\g^{\frac\b2+\frac14}}{2\sqrt\pi n^{\frac\b2+\frac34}}e^{2\sqrt{\g n}} \qquad\text{as } n \to \infty.
	\]
\end{proposition}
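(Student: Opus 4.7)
The plan is to prove this by a version of Wright's circle method combined with an Ingham-style monotonicity argument. Starting from Cauchy's integral formula, write
\[
b(n)=\frac{1}{2\pi i}\oint_{|q|=e^{-\alpha}}\frac{B(q)}{q^{n+1}}\,dq=\frac{e^{\alpha n}}{2\pi}\int_{-\pi}^{\pi}B\!\left(e^{-\alpha+i\theta}\right)e^{-in\theta}\,d\theta,
\]
and choose the radius $\alpha=\alpha_n$ to be the saddle point of $B(e^{-z})e^{nz}$. By the first asymptotic in \eqref{E:as1}, the relevant exponent behaves like $\gamma/\alpha+n\alpha$, which is minimized at $\alpha_n=\sqrt{\gamma/n}$, giving the saddle value $2\sqrt{\gamma n}$ that matches the claimed exponential.

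Next I would split the contour into a major arc $|\theta|\le \alpha_n^{1+\epsilon}$, which lies inside the cone $R_\Delta$ for $n$ large, and a minor arc $|\theta|>\alpha_n^{1+\epsilon}$. On the major arc, the asymptotic $B(e^{-z})\sim\lambda z^\beta e^{\gamma/z}$ holds uniformly by the second bound in \eqref{E:as1}, so after Taylor expanding $\gamma/(\alpha_n-i\theta)$ to second order in $\theta$ and pulling out the constant factors, the major-arc contribution reduces to a Gaussian integral. A direct computation (using the identity $\int_{-\infty}^\infty e^{-\gamma\theta^2/\alpha_n^3}\,d\theta=\sqrt{\pi\alpha_n^3/\gamma}$) yields exactly
\[
\frac{\lambda\gamma^{\frac{\beta}{2}+\frac{1}{4}}}{2\sqrt\pi\,n^{\frac{\beta}{2}+\frac{3}{4}}}e^{2\sqrt{\gamma n}}.
\]

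The main obstacle, and the place where the hypotheses of non-negativity and monotonicity of $b(n)$ are essential, is bounding the minor-arc contribution. Outside $R_\Delta$, one no longer has the analytic growth estimate, so a naive contour bound fails. Here I would apply Ingham's trick: instead of analyzing $b(n)$ directly on the minor arc, compare it via monotonicity to a smoothed average $h^{-1}\sum_{m=n}^{n+h-1}b(m)$ with $h=h_n\to\infty$ chosen slowly enough that $b(n\pm h)\sim b(n)$, yet fast enough that the corresponding Fej\'er-type kernel damps the minor arc to lower order. The monotonicity of $b(n)$ sandwiches $b(n)$ between such averages, and since the smoothed quantity is controlled entirely by the cone region, the saddle-point estimate from the previous step transfers to $b(n)$.

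Two technical points to watch: first, uniformity of the asymptotic in the cone is needed throughout the major arc, so the second bound of \eqref{E:as1} must be upgraded to an asymptotic there by standard analytic continuation; second, the choice of $h_n$ must be calibrated to the Gaussian width $\alpha_n^{3/2}$ so that the smoothing and the monotonicity errors balance. Once these are in place, the asymptotic for $b(n)$ follows immediately, completing the proof.
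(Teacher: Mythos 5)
First, a point of reference: the paper does not prove this proposition at all --- it is imported verbatim from \cite[Theorem 1.1]{BJM} as a corrected form of Ingham's Tauberian theorem \cite{I} --- so the comparison here is with the proof in that reference, which is a genuine Tauberian argument rather than a circle-method computation. Your saddle-point bookkeeping is correct as far as it goes: $\alpha_n=\sqrt{\gamma/n}$, the saddle value $2\sqrt{\gamma n}$, and the Gaussian integral $\int_{\mathbb{R}}e^{-\gamma\theta^2/\alpha_n^3}\,d\theta=\sqrt{\pi\alpha_n^3/\gamma}$ do reproduce the constant $\lambda\gamma^{\beta/2+1/4}/(2\sqrt{\pi}\,n^{\beta/2+3/4})$. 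But the sketch has two gaps that I do not believe can be repaired within this framework.

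\emph{Major arc.} You assert that the asymptotic $B(e^{-z})\sim\lambda z^{\beta}e^{\gamma/z}$ holds uniformly on the major arc ``by the second bound in \eqref{E:as1}''. The hypothesis off the real axis is only an \emph{upper bound} of the right order of magnitude, and an upper bound does not upgrade to an asymptotic; ``standard analytic continuation'' is not an argument. Nor do Lindel\"of--Montel type theorems on asymptotic values in a Stolz angle apply: the quantity $B(e^{-z})z^{-\beta}e^{-\gamma/z}$ is only known to be $O\bigl(e^{\gamma(1/|z|-\operatorname{Re}(1/z))}\bigr)$, and $1/|z|-\operatorname{Re}(1/z)\asymp_{\Delta}1/|z|$ in every cone $R_\Delta$ with $\Delta>0$, so this function is not known to be bounded in any cone of positive opening. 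The main term is therefore not justified by the stated hypotheses alone.

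\emph{Minor arc.} The proposed calibration of $h_n$ does not exist. For the monotonicity sandwich $\frac1h\sum_{j=1}^{h}b(n-j)\le b(n)\le\frac1h\sum_{j=0}^{h-1}b(n+j)$ to recover an asymptotic of size $n^{c}e^{2\sqrt{\gamma n}}$ one needs $h=o(\sqrt n)$, since $b(n+h)/b(n)\to e^{\sqrt{\gamma}\,\ell}$ whenever $h/\sqrt n\to\ell$. On the other hand, off the cone the only available estimate is $|B(e^{-\alpha_n+i\theta})|\le B(e^{-\alpha_n})\asymp\alpha_n^{\beta}e^{\gamma/\alpha_n}$, which exceeds the main term by the factor $\alpha_n^{-3/2}\asymp n^{3/4}$ (the reciprocal of the Gaussian width), while the geometric/Fej\'er kernel of length $h$ saves only $O(h^{-1}\log n)$ after integration over $\Delta\alpha_n\le|\theta|\le\pi$. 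With $h=o(\sqrt n)$ the minor-arc bound therefore still exceeds the main term by $n^{1/4+o(1)}$; beating it would require $h\gg n^{3/4}\log n$, incompatible with $h=o(\sqrt n)$. Higher-order kernels do not help either, because $h\alpha_n=O(1)$ forces the kernel to be of size $\asymp1$ at the cone boundary $|\theta|\asymp\Delta\alpha_n$, and the hypothesis gives no uniformity in $\Delta$ that would let you widen the cone with $n$. This residual $n^{1/4}$ is exactly the loss in the elementary Tauberian upper bound $b(n)\le(1-e^{-t})e^{nt}B(e^{-t})$, and removing it is the actual content of Ingham's theorem; it is achieved in \cite{I,BJM} by a Wiener-type Tauberian/comparison argument, not by contour integration. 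As written, your proposal proves $b(n)\ll n^{1/4+o(1)}\cdot\frac{\lambda\gamma^{\beta/2+1/4}}{2\sqrt{\pi}n^{\beta/2+3/4}}e^{2\sqrt{\gamma n}}$ at best, not the claimed asymptotic.
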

Moreover, we require the asymptotic behavior of $(q)_\infty$ as $q\to 1$.
\begin{lemma}\label{L:q-Pochhammer asymptotics}
	Let $q=e^{-z}$ and $\Delta \geq 0$. Then, as $z\to0$ in $R_\Delta$, we have
	\begin{align*}
	\left( q \right)_\infty &\sim \sqrt{\dfrac{2\pi}{z}} e^{- \frac{\pi^2}{6z}}.
	\end{align*}
\end{lemma}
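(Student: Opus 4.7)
The plan is to derive the asymptotic from the modular transformation of the Dedekind eta function. Recall that $\eta(\tau) = e^{2\pi i \tau/24}(q)_\infty$ with $q=e^{2\pi i \tau}$, and that $\eta$ satisfies $\eta(-1/\tau) = \sqrt{-i\tau}\,\eta(\tau)$. Setting $q=e^{-z}$ forces $\tau = iz/(2\pi)$, which lies in $\mathbb{H}$ precisely because $\operatorname{Re}(z)>0$ inside $R_\Delta$. Under the modular involution this is mapped to $\tau'=-1/\tau=2\pi i/z$, corresponding to the ``dual'' variable $q':=e^{2\pi i\tau'}=e^{-4\pi^2/z}$.

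Writing the eta transformation out in terms of $z$ yields
\[
e^{-\pi^2/(6z)}(q';q')_\infty \;=\; \sqrt{\tfrac{z}{2\pi}}\; e^{-z/24}(q)_\infty,
\]
so that
\[
(q)_\infty \;=\; \sqrt{\tfrac{2\pi}{z}}\; e^{-\pi^2/(6z)}\; e^{z/24}\;(q';q')_\infty.
\]
The factor $e^{z/24}\to 1$ is harmless as $z\to 0$. The whole game therefore reduces to showing that $(q';q')_\infty\to 1$ uniformly as $z\to 0$ in the cone $R_\Delta$.

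For this, note that for $z=x+iy\in R_\Delta$ one has $|y|\le \Delta x$, hence
\[
\operatorname{Re}\!\left(\tfrac{1}{z}\right)=\frac{x}{x^2+y^2}\;\ge\;\frac{x}{(1+\Delta^2)x^2}=\frac{1}{(1+\Delta^2)\,x}\;\longrightarrow\;+\infty
\]
as $z\to 0$ in $R_\Delta$. Therefore $|q'|=\exp(-4\pi^2\operatorname{Re}(1/z))\to 0$ uniformly, and the trivial estimate
\[
\bigl|(q';q')_\infty - 1\bigr|\;\le\;\prod_{n\ge 1}\bigl(1+|q'|^n\bigr)-1 \;\longrightarrow\;0
\]
gives the desired uniform convergence $(q';q')_\infty\to 1$. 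Combining the three factors establishes the claimed asymptotic.

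The only potentially delicate point is controlling $(q';q')_\infty$ uniformly inside the cone rather than merely along the positive real axis, and the elementary estimate above handles this since $\operatorname{Re}(1/z)$ blows up uniformly in $R_\Delta$. Everything else is bookkeeping with the eta transformation.
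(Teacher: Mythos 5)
Your proof is correct. The paper states this lemma without proof, treating it as a standard fact; the standard derivation is exactly the one you give, namely the modular inversion $\eta(-1/\tau)=\sqrt{-i\tau}\,\eta(\tau)$ applied at $\tau=\frac{iz}{2\pi}$, together with the observation that $\operatorname{Re}(1/z)\ge \frac{1}{(1+\Delta^2)x}\to\infty$ uniformly in the cone $R_\Delta$, so that $(q';q')_\infty\to 1$. Your bookkeeping of the factors $\sqrt{2\pi/z}$, $e^{-\pi^2/(6z)}$, and $e^{z/24}$ is accurate, and the elementary bound $\left|(q';q')_\infty-1\right|\le \prod_{n\ge 1}\left(1+|q'|^n\right)-1$ correctly handles uniformity in $R_\Delta$.
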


%{\bf \color{blue} K: added the following subsection also in the details file, and changed $\epsilon\mapsto \varepsilon$ throughout.}

\subsection{The Euler--Maclaurin summation formula} We use a variation of the Euler--Maclaurin summation formula, an important tool for computing asymptotic expansions of infinite sums, studied by Zagier \cite[Proposition 3]{Z} for one-dimensional smooth functions and extended by Jennings-Shaffer, Mahlburg, and the second author \cite[Theorem 1.2]{BJM} for multi-dimensional complex-valued functions. A function $g$, defined over an unbounded domain $D\subset \mathbb{C}$, is of {\it sufficient decay} if there exists $\varepsilon>0$ such that $g(w)\ll |w|^{-1-\varepsilon}$ uniformly as $|w|\to \infty$ in $D$.  For $0\le \theta<\frac{\pi}{2}$, we define
$D_{\theta}:=\{re^{i\a}: r\ge 0\ \text{and}\ |\a|\le \theta\}$. 
\begin{proposition}\label{EM1}
Let $0\le \theta<\frac{\pi}{2}$ and $g$ be a holomorphic function in a domain containing $D_{\theta}$. Also assume that $g$ and all of its derivatives are of sufficient decay. Then, for $a\in \mathbb{R}_{\ge 0}$, $N\in \mathbb{N}_0$, and as $z\to 0$ uniformly in $D_{\theta}$, we have
\[
\sum_{m\ge 0}g\left((m+a)z\right)=\frac 1z\int_{0}^{\infty} g(w)dw-\sum_{n=0}^{N-1}\frac{B_{n+1}(a)g^{(n)}(0)}{(n+1)!}z^n+O\left(z^N\right),
\]	
where $B_n(x)$ denotes the $n$-th Bernoulli polynomial.
\end{proposition}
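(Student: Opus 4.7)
The plan is to reduce to Zagier's one-dimensional Euler--Maclaurin expansion \cite{Z}, which handles the case of real $z > 0$, and then extend to the complex sector $D_\theta$ using the holomorphy of $g$ and the sufficient decay of all its derivatives. This mirrors the approach taken in \cite{BJM}.

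\textbf{Step 1 (real $z$).} Fix $z > 0$. I would apply the classical Euler--Maclaurin summation formula with Bernoulli polynomial remainder to the smooth, rapidly decaying function $h(x) := g((x+a)z)$. Using $h^{(n)}(0) = z^n g^{(n)}(az)$ and the substitution $w = (x+a)z$, which gives
\[
\int_0^\infty h(x)\, dx = \frac{1}{z}\int_{az}^\infty g(w)\, dw = \frac{1}{z}\int_0^\infty g(w)\, dw - \frac{1}{z}\int_0^{az} g(w)\, dw,
\]
I would Taylor-expand both $g^{(n)}(az)$ and the finite integral $\int_0^{az} g(w)\,dw$ around $z=0$ and collect like powers of $z$. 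Via the generating function $\sum_{n \ge 0} B_n(a) t^n/n! = te^{at}/(e^t-1)$, the coefficient of $z^n$ consolidates to precisely $-B_{n+1}(a)\, g^{(n)}(0)/(n+1)!$, matching the statement of the proposition.

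\textbf{Step 2 (extension to $D_\theta$).} For $z = re^{i\alpha} \in D_\theta$ with $|\alpha| \leq \theta < \tfrac{\pi}{2}$, the ray $\{(m+a)z : m \geq 0\}$ lies inside the sector $|\arg w| \leq \theta$, which is contained in the analytic domain of $g$. The bound $|g(w)| \ll |w|^{-1-\varepsilon}$ gives absolute and locally uniform convergence of $\sum_{m\ge 0} g((m+a)z)$ on compact subsets of $D_\theta \setminus \{0\}$. A contour-rotation argument, justified by Cauchy's theorem together with the holomorphy and sufficient decay of $g$, shows
\[
\int_0^\infty g(w)\, dw = e^{i\alpha} \int_0^\infty g(xe^{i\alpha})\, dx,
\]
so the derivation of Step 1 extends unchanged to complex $z$. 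The remainder admits the standard integral representation
\[
R_N(z) = \frac{(-z)^N}{N!}\int_0^\infty \widetilde{B}_N(x)\, g^{(N)}((x+a)z)\, dx,
\]
where $\widetilde{B}_N$ is the periodic Bernoulli function. Since $|\widetilde{B}_N|$ is uniformly bounded and $g^{(N)}$ is of sufficient decay, pulling out $|z|^N$ and estimating yields $R_N(z) = O(|z|^N)$ uniformly as $z \to 0$ in $D_\theta$.

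\textbf{Expected main obstacle.} The principal technical concern is the uniformity of the remainder estimate across the entire sector $D_\theta$. One must upgrade pointwise decay of $g^{(N)}$ to uniform decay along every ray $e^{i\alpha}[0,\infty)$ with $|\alpha| \leq \theta$. This follows from the decay hypothesis together with Cauchy's integral formula on small disks inside the analytic domain of $g$, which converts decay of $g$ into uniform decay of its derivatives in a slightly smaller sector; choosing $\theta$ strictly less than the opening angle of that domain allows the estimate to be carried out on every ray simultaneously. Once this uniformity is in hand, the contour rotation for the main integral and the Bernoulli-polynomial bookkeeping are both routine.
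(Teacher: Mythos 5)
First, a framing point: the paper does not prove Proposition \ref{EM1} at all; it is quoted from Zagier \cite[Proposition 3]{Z} and its complex-sector extension in \cite[Theorem 1.2]{BJM}, so there is no internal proof to compare against. Your reconstruction follows the standard route of those references: classical Euler--Maclaurin applied to $h(x)=g((x+a)z)$, Taylor expansion of $g^{(n)}(az)$ and of $\frac1z\int_0^{az}g(w)\,dw$ about $z=0$, consolidation of the coefficient of $z^n$ into $-B_{n+1}(a)g^{(n)}(0)/(n+1)!$ via the addition formula for Bernoulli polynomials, and then passage to the sector $D_\theta$ by working along the ray $e^{i\alpha}[0,\infty)$ and rotating the contour of the main integral. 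Step 1 and the contour rotation are fine. Your ``expected main obstacle'' is, however, a non-issue: the paper's definition of sufficient decay already demands $g(w)\ll|w|^{-1-\varepsilon}$ \emph{uniformly} as $|w|\to\infty$ in a domain containing $D_\theta$, and the same for all derivatives, so uniformity along every ray is part of the hypothesis rather than something to be established.

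The genuine gap is in the final error estimate. From
\[
R_N(z)=\frac{(-1)^{N+1}z^N}{N!}\int_0^\infty\widetilde B_N(x)\,g^{(N)}\bigl((x+a)z\bigr)\,dx ,
\]
``pulling out $|z|^N$ and estimating'' yields only $O\bigl(|z|^{N-1}\bigr)$, not $O\bigl(|z|^N\bigr)$, because the remaining integral is not $O(1)$:
\[
\int_0^\infty\bigl|g^{(N)}\bigl((x+a)z\bigr)\bigr|\,dx=\frac{1}{|z|}\int_{a|z|}^\infty\bigl|g^{(N)}\bigl(we^{i\alpha}\bigr)\bigr|\,dw\ll\frac{1}{|z|}.
\]
Two standard repairs close this. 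Either run the whole argument with $N+1$ in place of $N$, so the remainder is $O(|z|^N)$, and absorb the now-explicit term $-\frac{B_{N+1}(a)g^{(N)}(0)}{(N+1)!}z^N$ into the error $O(z^N)$; or exploit that $\widetilde B_N$ has mean zero on each period for $N\ge1$, so that on $[m,m+1]$ one may replace $g^{(N)}((x+a)z)$ by $g^{(N)}((x+a)z)-g^{(N)}((m+a)z)$, gaining a factor $|z|\sup\bigl|g^{(N+1)}\bigr|$, whose sum over $m$ is $O(1)$ by the decay of $g^{(N+1)}$ --- this is precisely where the hypothesis that \emph{all} derivatives of $g$ are of sufficient decay gets used. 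With either fix, your argument is complete and agrees with the proof in \cite{BJM}.
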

Next, we consider two-dimensional sums. Throughout we write $\bm{x}=\left(x_1,x_2\right)\in \mathbb{R}^2$. A function $f$ defined on an unbounded domain $D\subset\mathbb{C}^2$ is of {\it sufficient decay} if there exist $\varepsilon_1,\varepsilon_2>0$ such that $f(\bm{x})\ll |w_1|^{-1-\varepsilon_1}|w_2|^{-1-\varepsilon_2}$ uniformly as $|w_1|+|w_2|\to \infty$ in $D$. From \cite[Proposition 5.2]{BJM} (with $r=2$), we have the following. 
\begin{proposition}\label{EM2}
Let $0\le \theta<\frac{\pi}{2}$ and $f$ be a holomorphic function in a domain containing $D_{\theta}\times D_{\theta}$. Also assume that $f$ and all of its derivatives are of sufficient decay. Then for $\bm{a}\in \mathbb{R}^2_{\ge 0}$, $N\in \mathbb{N}_0$, and as $z\to 0$ uniformly in $D_{\theta}$, we have
\begin{align*}
\sum_{m\ge 0}f\left(\left(\bm{m}+\bm{a}\right)z\right)&=\frac{1}{z^2}\int_{0}^{\infty}\int_{0}^{\infty} f(\bm{x})dx_1dx_2-\frac 1z\sum_{n_1=0}^{N}\frac{B_{n_1+1}\left(a_1\right)z^{n_1}}{\left(n_1+1\right)!}\int_{0}^{\infty}f^{\left(n_1,0\right)}\left(0,x_2\right)dx_2\\
&\hspace{1 cm}-\frac 1z\sum_{n_2=0}^{N}\frac{B_{n_2+1}\left(a_2\right)z^{n_2}}{\left(n_2+1\right)!}\int_{0}^{\infty}f^{\left(0,n_2\right)}\left(x_1,0\right)dx_1\\
&\hspace{1 cm}+\sum_{n_1+n_2<N}\frac{B_{n_1+1}\left(a_1\right)B_{n_2+1}\left(a_2\right)f^{\left(n_1,n_2\right)}\left(\bm{0}\right)}{\left(n_1+1\right)!\left(n_2+1\right)!}+O\left(z^N\right).
\end{align*}
\end{proposition}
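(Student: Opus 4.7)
The natural strategy is to apply the one-variable formula of Proposition~\ref{EM1} twice, first in the variable $m_1$ and then in $m_2$. Fixing $m_2\ge 0$ and setting $y:=(m_2+a_2)z$, the function $g_y(x):=f(x,y)$ is holomorphic and of sufficient decay on $D_\theta$; applying Proposition~\ref{EM1} to $g_y$ with order $M:=N+1$ gives
\begin{align*}
\sum_{m_1\ge 0}f\bigl((m_1+a_1)z,y\bigr)
=\frac{1}{z}\int_0^\infty f(x_1,y)\,dx_1
-\sum_{n_1=0}^{M-1}\frac{B_{n_1+1}(a_1)z^{n_1}}{(n_1+1)!}\,f^{(n_1,0)}(0,y)+O\!\left(z^{M}\right),
\end{align*}
with the implicit constant depending on bounds for $g_y$ and its derivatives.

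Next, I would sum over $m_2\ge 0$ and apply Proposition~\ref{EM1} a second time to each piece separately. Applied to the marginal $H(y):=\int_0^\infty f(x_1,y)\,dx_1$---which is holomorphic and of sufficient decay in $y$ by differentiation under the integral sign, justified by the decay hypothesis on $f$---this produces the main term $z^{-2}\int_0^\infty\int_0^\infty f$ together with the boundary integrals $\int_0^\infty f^{(0,n_2)}(x_1,0)\,dx_1$ and a remainder that contributes $O(z^{N})$ after accounting for the outer factor $z^{-1}$. Applied to each $y\mapsto f^{(n_1,0)}(0,y)$, it yields the boundary integrals $\int_0^\infty f^{(n_1,0)}(0,x_2)\,dx_2$ and, for $n_1+n_2<N$, the corner sum involving the pure derivatives $f^{(n_1,n_2)}(\bm 0)$; terms with $n_1+n_2\ge N$ are absorbed into $O(z^N)$. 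Collecting these contributions recovers precisely the right-hand side stated in Proposition~\ref{EM2}.

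The main technical obstacle is the uniform error control. In the first application of Proposition~\ref{EM1}, the constant hidden in $O(z^{M})$ depends on $y$; to sum this error over $m_2$ and still land in $O(z^N)$, one must extract decay in $y$ from the sufficient-decay hypothesis $|f(w_1,w_2)|\ll|w_1|^{-1-\varepsilon_1}|w_2|^{-1-\varepsilon_2}$ (and analogously for derivatives). Concretely, I would track through the proof of the one-dimensional formula that the implicit constant can be taken to decay like $|y|^{-1-\varepsilon_2}$, so that summation over $m_2\ge 0$ contributes at most an extra factor $z^{-1}$; choosing $M=N+1$ in the inner application and truncating the corner sum at $n_1+n_2<N$ then suffices. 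The remaining bookkeeping---verifying that differentiation under the integral sign commutes with the relevant limits, and that holomorphicity of $f$ on a neighborhood of $D_\theta\times D_\theta$ is inherited by the slices and marginals at each stage---is routine given the standing hypotheses.
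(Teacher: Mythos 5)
The paper does not prove this proposition at all: it is imported from \cite[Proposition 5.2]{BJM} (the case $r=2$), so there is no internal argument to compare against. Your strategy --- applying \Cref{EM1} in $m_1$ for each fixed $m_2$, then summing over $m_2$ and applying it again to the marginal $H(y)=\int_0^\infty f(x_1,y)\,dx_1$ and to each slice $y\mapsto f^{(n_1,0)}(0,y)$ --- is precisely the inductive argument used in the cited reference, and the bookkeeping you describe reproduces the stated right-hand side. (Your derivation naturally produces the factor $z^{n_1+n_2}$ in the corner sum; that factor is missing from the statement as transcribed above, but it is present in \cite{BJM} and is what the authors actually use later, with $z\mapsto\sqrt{z}$, in the proof of \Cref{thm1a}. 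So your version is the correct one.)

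One point in your error control needs sharpening. You propose that the implicit constant in the inner $O(z^M)$ decays like $|y|^{-1-\varepsilon_2}$. As stated this cannot be summed over $m_2\ge 0$ when $a_2=0$ (the $m_2=0$ term is infinite), and even restricting to $m_2\ge 1$ the sum $\sum_{m_2\ge 1}\left((m_2+a_2)|z|\right)^{-1-\varepsilon_2}$ has size $|z|^{-1-\varepsilon_2}$ rather than $|z|^{-1}$, which with $M=N+1$ would only yield $O\!\left(z^{N-\varepsilon_2}\right)$. The bound you actually get --- from holomorphy (hence boundedness of $f$ and its derivatives near the origin) combined with sufficient decay at infinity --- is of the shape $\min\!\left(1,|y|^{-1-\varepsilon_2}\right)$. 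Since $w\mapsto\min\!\left(1,|w|^{-1-\varepsilon_2}\right)$ is integrable on $[0,\infty)$, the sum over $m_2$ is a Riemann sum costing exactly one factor of $|z|^{-1}$, and your choice $M=N+1$ then closes the argument. With that adjustment your proposal is a complete and correct outline of the proof given in \cite{BJM}.
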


%{\bf \color{blue} K: How about merging \Cref{genfn} and \Cref{sec4}? I am asking because \Cref{genfn} is really short to call it a section.}

\section{Proof of Theorem \ref{thm1} and Corollary \ref{thm1a}}\label{genfn}

%In this section, we prove Theorem \ref{thm1}.

In this section, we prove \Cref{thm1} and \Cref{thm1a}. We start with \Cref{thm1}.

\noindent\emph{Proof of Theorem \ref{thm1}}. By \eqref{pod}, we have %we split into two cases depending on whether the smallest part is odd or even. This gives
\begin{align}\label{eqn2}
\overline{P}_{\rm od}(q)%&=\sum_{n\ge 0}\frac{q^{2n+1}}{\left(q^{2n+2};q^2\right)_{\infty}}\frac{\left(-q^{2n+3};q^2\right)_{\infty}}{\left(q^{2n+3};q^2\right)_{\infty}}+\sum_{n\ge 1}\frac{q^{2n}}{\left(q^{2n};q^2\right)_{\infty}}\frac{\left(-q^{2n+1};q^2\right)_{\infty}}{\left(q^{2n+1};q^2\right)_{\infty}}\nonumber\\
%&=\sum_{n\ge 0}\frac{q^{2n+1}}{\left(q^{2n+2};q^2\right)_{\infty}}\frac{\left(-q^{2n+3};q^2\right)_{\infty}}{\left(q^{2n+3};q^2\right)_{\infty}}+\sum_{n\ge 0}\frac{q^{2n+2}}{\left(q^{2n+2};q^2\right)_{\infty}}\frac{\left(-q^{2n+3};q^2\right)_{\infty}}{\left(q^{2n+3};q^2\right)_{\infty}}\nonumber\\
%&=q(1+q)\!\sum_{n\ge 0}\!\frac{q^{2n}}{\left(q^{2n+2};q^2\right)_{\infty}}\frac{\left(-q^{2n+3};q^2\right)_{\infty}}{\left(q^{2n+3};q^2\right)_{\infty}}\!
=\!\frac{q\left(-q;q^2\right)_{\infty}}{\left(q^2,q^3;q^2\right)_{\infty}}\sum_{n\ge 0}\!\frac{\left(q^2,q^3;q^2\right)_{n}q^{2n}}{\left(-q^3;q^2\right)_{n}}.
\end{align}
%In the first sum of the above identity, $q^{2n+1}$ enumerates the smallest odd part with multiplicity $1$, i.e., appears exactly once, non-overlined, $\frac{1}{(q^{2n+2};q^2)_{\infty}}$ keeps track of the even parts which are non-overlined, and $\frac{(-q^{2n+3};q^2)_{\infty}}{(q^{2n+3};q^2)_{\infty}}$ enumerates all odd parts (larger than smallest one) may be overlined. Whereas, in the second sum $\frac{q^{2n}}{(q^{2n};q^2)_{\infty}}$ keeps track of the even parts which are allowed to repeat and are non-overlined, and $\frac{(-q^{2n+1};q^2)_{\infty}}{(q^{2n+1};q^2)_{\infty}}$ enumerates all odd parts (larger than smallest even part) may be overlined.
%Simplifying \eqref{eqn1}, we have
%\begin{align}%\label{eqn2}
%\operatorname{P_{od}}(q)&=\sum_{n\ge 0}\frac{q^{2n+1}}{(q^{2n+2};q^2)_{\infty}}\frac{(-q^{2n+3};q^2)_{\infty}}{(q^{2n+3};q^2)_{\infty}}+\sum_{n\ge 0}\frac{q^{2n+2}}{(q^{2n+2};q^2)_{\infty}}\frac{(-q^{2n+3};q^2)_{\infty}}{(q^{2n+3};q^2)_{\infty}}\nonumber\\
%&=q(1+q)\sum_{n\ge 0}\frac{q^{2n}}{(q^{2n+2};q^2)_{\infty}}\frac{(-q^{2n+3};q^2)_{\infty}}{(q^{2n+3};q^2)_{\infty}}\nonumber\\
%&=\frac{q(1+q)(-q^3;q^2)_{\infty}}{(q^2;q^2)_{\infty}(q^3;q^2)_{\infty}}\sum_{n\ge 0}\frac{(q^2;q^2)_{n}(q^3;q^2)_{n}}{(-q^3;q^2)_{n}}q^{2n}.
%\end{align}
Changing $q\mapsto q^2$ in \Cref{lemident1} and then dividing both sides by $q^2$, we get
%\begin{multline*}
%q^2+\sum_{n\ge 1}\frac{\left(-aq^2,-bq^2;q^2\right)_nq^{2n+2}}{\left(-cq^2;q^2\right)_n}\\
%=\frac{c}{ab}\sum_{n\ge 1}\frac{\left(-c^{-1};q^2\right)_n \pa{ab}{c}^n q^{n(n+1)}}{\left(\frac{aq^2}{c},\frac{bq^2}{c};q^2\right)_n} -\frac{c}{ab}\frac{\left(-aq^2,-bq^2;q^2\right)_{\infty}}{\left(-cq^2;q^2\right)_{\infty}}\sum_{n\ge 1}\frac{\pa{ab}{c^2}^nq^{2n^2}}{\left(\frac{aq^2}{c},\frac{bq^2}{c};q^2\right)_n}.
%\end{multline*}
%Dividing both sides by $q^2$, it follows that
%\begin{multline*}
%1+\sum_{n\ge 1}\frac{\left(-aq^2,-bq^2;q^2\right)_nq^{2n}}{\left(-cq^2;q^2\right)_n}\\
%=\frac{c}{abq^2}\sum_{n\ge 1}\frac{\left(-c^{-1};q^2\right)_n \pa{ab}{c}^n q^{n(n+1)}}{\left(\frac{aq^2}{c},\frac{bq^2}{c};q^2\right)_n} -\frac{c}{abq^2}\frac{\left(-aq^2,-bq^2;q^2\right)_{\infty}}{\left(-cq^2;q^2\right)_{\infty}}\sum_{n\ge 1}\frac{\pa{ab}{c^2}^nq^{2n^2}}{\left(\frac{aq^2}{c},\frac{bq^2}{c};q^2\right)_n}.
%\end{multline*}
%This can be rewritten as
\begin{align*}
&\sum_{n\ge 0}\frac{\left(-aq^2,-bq^2;q^2\right)_nq^{2n}}{\left(-cq^2;q^2\right)_n}\\[-7pt]
&\hspace{3 cm}=\frac{c}{abq^2}\sum_{n\ge 1}\frac{\left(-c^{-1};q^2\right)_{\!n} \pa{ab}{c}^{\!n} q^{n(n+1)}}{\left(\frac{aq^2}{c},\frac{bq^2}{c};q^2\right)_{\!n}} \!-\!\frac{c\left(-aq^2,-bq^2;q^2\right)_{\!\infty}}{abq^2\left(-cq^2;q^2\right)_{\!\infty}}\sum_{n\ge 1}\frac{\pa{ab}{c^2}^nq^{2n^2}}{\left(\frac{aq^2}{c},\frac{bq^2}{c};q^2\right)_{\!n}}.
\end{align*}
%\begin{multline*}%\label{eqn3}
%\!\!\!\sum_{n\ge 0}\frac{\left(-aq^2,-bq^2;q^2\right)_nq^{2n}}{\left(-cq^2;q^2\right)_n}\\
%\vspace{-0.15 cm}
%\!=\frac{c}{abq^2}\sum_{n\ge 1}\frac{\left(-c^{-1};q^2\right)_{\!n} \pa{ab}{c}^{\!n} q^{n(n+1)}}{\left(\frac{aq^2}{c},\frac{bq^2}{c};q^2\right)_{\!n}} \!-\!\frac{c\left(-aq^2,-bq^2;q^2\right)_{\!\infty}}{abq^2\left(-cq^2;q^2\right)_{\!\infty}}\sum_{n\ge 1}\frac{\pa{ab}{c^2}^nq^{2n^2}}{\left(\frac{aq^2}{c},\frac{bq^2}{c};q^2\right)_{\!n}}.\!\!\!
%\sum_{n\ge 0}\frac{q^{2n}(-aq^2;q^2)_n(-bq^2;q^2)_n}{(-cq^2;q^2)_n}&=\frac{c}{abq^2}\sum_{n\ge 1}\frac{q^{n(n+1)}a^n b^n c^{-n}(-c^{-1};q^2)_n}{\left(\frac{aq^2}{c};q^2\right)_n \left(\frac{bq^2}{c};q^2\right)_n}\nonumber\\
%&-\frac{c}{abq^2}\frac{(-aq^2;q^2)_{\infty}(-bq^2;q^2)_{\infty}}{(-cq^2;q^2)_{\infty}}\sum_{n\ge 1}\frac{q^{2n^2}a^n b^n c^{-2n}}{\left(\frac{aq^2}{c};q^2\right)_n \left(\frac{bq^2}{c};q^2\right)_n}.
%\end{multline*}
Setting $a=-1,\ b=-q$, and $c=q$, yields
\begin{align*}%\label{eqn4}
%&\hspace{-1cm}
\sum_{n\ge 0}\frac{\left(q^2,q^3;q^2\right)_nq^{2n}}{\left(-q^3;q^2\right)_n}&=%\frac{1}{q^2}\sum_{n\ge 1}\frac{\left(-q^{-1};q^2\right)_nq^{n(n+1)}}{\left(-q,-q^2;q^2\right)_n}-\frac{\left(q^2,q^3;q^2\right)_{\infty}}{q^2\left(-q^3;q^2\right)_{\infty}}\sum_{n\ge 1}\frac{q^{2n^2-n}}{\left(-q,-q^2;q^2\right)_n}\nonumber\\
%&\hspace{1cm}=\frac{1}{q^2}\sum_{n\ge 1}\frac{\left(1+q^{-1}\right)\left(-q;q^2\right)_{n-1}q^{n(n+1)}}{\left(-q,-q^2;q^2\right)_n}-\frac{\left(q^2,q^3;q^2\right)_{\infty}}{q^2\left(-q^3;q^2\right)_{\infty}}\sum_{n\ge 1}\frac{q^{2n^2-n}}{\left(-q,-q^2;q^2\right)_n}\nonumber\\
%&\hspace{1cm}=\frac{1+q}{q^3}\sum_{n\ge 1}\frac{q^{n(n+1)}}{\left(1+q^{2n-1}\right) \left(-q^2;q^2\right)_n}-\frac{\left(q^2,q^3;q^2\right)_{\infty}}{q^2\left(-q^3;q^2\right)_{\infty}}\sum_{n\ge 1}\frac{q^{2n^2-n}}{\left(-q,-q^2;q^2\right)_n}\nonumber\\
%&\hspace{1cm}
%&=
\frac{1+q}{q^3}\sum_{n\ge 1}\frac{q^{n(n+1)}}{\left(1+q^{2n-1}\right) \left(-q^2;q^2\right)_n}-\frac{\left(q^2,q^3;q^2\right)_{\infty}}{q^2\left(-q^3;q^2\right)_{\infty}}\sum_{n\ge 1}\frac{q^{2n^2-n}}{\left(-q\right)_{2n}}.
\end{align*}
Plugging this into \eqref{eqn2}, we obtain
\begin{align}\label{eqn5}
\overline{P}_{\rm od}(q)%&=\frac{q(1+q)\left(-q^3;q^2\right)_{\infty}}{\left(q^2,q^3;q^2\right)_{\infty}}\left(\frac{1+q}{q^3}\sum_{n\ge 1}\frac{q^{n(n+1)}}{\left(1+q^{2n-1}\right) \left(-q^2;q^2\right)_n}-\frac{\left(q^2,q^3;q^2\right)_{\infty}}{q^2\left(-q^3;q^2\right)_{\infty}}\sum_{n\ge 1}\frac{q^{2n^2-n}}{\left(-q\right)_{2n}}\right)\nonumber\\
%&=\frac{q\left(-q;q^2\right)_{\infty}}{\left(q^2,q^3;q^2\right)_{\infty}}\left(\frac{1+q}{q^3}\sum_{n\ge 1}\frac{q^{n(n+1)}}{\left(1+q^{2n-1}\right) \left(-q^2;q^2\right)_n}-\frac{\left(q^2,q^3;q^2\right)_{\infty}}{q^2\left(-q^3;q^2\right)_{\infty}}\sum_{n\ge 1}\frac{q^{2n^2-n}}{\left(-q\right)_{2n}}\right)\nonumber\\
&=\frac{(1+q)\left(-q;q^2\right)_{\infty}}{q^2\left(q^2,q^3;q^2\right)_{\infty}}\sum_{n\ge 1}\frac{q^{n(n+1)}}{\left(1+q^{2n-1}\right) \left(-q^2;q^2\right)_n}-\frac{1+q}{q}\sum_{n\ge 1}\frac{q^{2n^2-n}}{\left(-q\right)_{2n}}.
\end{align}
Letting $\zeta=-q^{-1}$ in \Cref{Garvan}, we get 
%\[
%\sum_{n\ge 1}\frac{(-1)^{n+1}(-1)^n q^{-n} q^{n^2}}{\left(1+ q^{2n-1}\right)\left(-1;q^2\right)_n }=\sum_{n\ge 1}\frac{(-1)^n \left(q\right)_{n-1}q^{-n} q^{\frac{n(n+1)}{2}}}{\left(-1\right)_n}.
%\]
%This implies that
%\begin{equation*}
%-\frac 12\sum_{n\ge 1}\frac{q^{n(n-1)}}{\left(1+ q^{2n-1}\right)\left(-q^2;q^2\right)_{n-1} }=\frac 12\sum_{n\ge 1}\frac{(-1)^n \left(q\right)_{n-1}q^{\frac{n(n-1)}{2}}}{\left(-q\right)_{n-1}}.
%\end{equation*}
%Dividing both sides by $-\frac 12$, we get
\begin{align}\label{eqn6}
\sum_{n\ge 1}\frac{q^{n(n-1)}}{\left(1+ q^{2n-1}\right)\left(-q^2;q^2\right)_{n-1} }%&=-\sum_{n\ge 1}\frac{(-1)^n \left(q\right)_{n-1}q^{\frac{n(n-1)}{2}}}{\left(-q\right)_{n-1}}\os{n\mapsto n+1}=\sum_{n\ge 0}\frac{(-1)^n \left(q\right)_{n}q^{\frac{n(n+1)}{2}}}{\left(-q\right)_{n}}\nonumber\\
&=1+\sum_{n\ge 1}\frac{(-1)^n \left(q\right)_{n}q^{\frac{n(n+1)}{2}}}{\left(-q\right)_{n}}.
\end{align}
 Continuing with the left-hand side of \eqref{eqn6} and writing $1+q^{2n}=1+q^{2n-1}-(1-q)q^{2n-1}$, we have
\begin{align}\label{eqn7}
\sum_{n\ge 1}\frac{q^{n(n-1)}}{\left(1+ q^{2n-1}\right)\left(-q^2;q^2\right)_{n-1} }%&=\sum_{n\ge 1}\frac{q^{n(n-1)}}{\left(-q^2;q^2\right)_{n}}\frac{1+q^{2n}}{1+q^{2n-1}}=\sum_{n\ge 1}\frac{q^{n(n-1)}}{\left(-q^2;q^2\right)_{n}}\frac{1+q^{2n-1}-q^{2n-1}\left(1-q\right)}{1+q^{2n-1}}\nonumber\\
%&
=\sum_{n\ge 1}\frac{q^{n(n-1)}}{\left(-q^2;q^2\right)_{n}}-\frac{1-q}{q}\sum_{n\ge 1}\frac{q^{n(n+1)}}{ \left(1+ q^{2n-1}\right)\left(-q^2;q^2\right)_{n}}.
\end{align}
%Changing $q\mapsto q^2$ in \Cref{kang}, we obtain
%\[
%\sum_{n\ge 1}\frac{q^{n(n-1)}}{(-q^2;q^2)_{n}}%=\sum_{n\ge 0}\frac{q^{n(n+1)}}{\left(-q^2;q^2\right)_{n+1}}\os{n\mapsto n-1}
%=1.
%\]
Plugging \eqref{eqn6} into \eqref{eqn7} and then applying \Cref{kang} with $q\mapsto q^2$, we obtain 
%\begin{align*}
%\sum_{n\ge 1}\frac{q^{n(n-1)}}{\left(1+ q^{2n-1}\right)\left(-q^2;q^2\right)_{n-1} }%&=1-\frac{1-q}{q}\sum_{n\ge 1}\frac{q^{n(n+1)}}{\left(1+ q^{2n-1}\right)\left(-q^2;q^2\right)_{n} }\\
%&\hspace{-0.2 cm}\os{\eqref{eqn6}}
%=1+\sum_{n\ge 1}\frac{(-1)^n \left(q\right)_{n}q^{\frac{n(n+1)}{2}}}{\left(-q\right)_{n}}.
%\end{align*}
%Therefore, we obtain
\begin{equation}\label{above3.6}
\sum_{n\ge 1}\frac{q^{n(n+1)}}{\left(1+ q^{2n-1}\right)\left(-q^2;q^2\right)_{n} }=-\frac{q}{1-q}\sum_{n\ge 1}\frac{(-1)^n \left(q\right)_{n}q^{\frac{n(n+1)}{2}}}{\left(-q\right)_{n}}.
\end{equation}
Combining this with \eqref{eqn5} and \eqref{corson}, we have
\begin{align*}%\label{Neweqn1paper}
\overline{P}_{\rm od}(q)\!&=\!-\frac{(1+q)\left(-q;q^2\right)_{\infty}}{q\left(q\right)_{\infty}}W_1(q)\!+\!\frac{(1+q)\left(-q;q^2\right)_{\infty}}{q\left(q\right)_{\infty}}-\!\frac{1+q}{q}\sum_{n\ge 1}\frac{q^{2n^2-n}}{(-q)_{2n}}.
\end{align*}

Thus, to conclude the proof, we need to show that
\begin{equation}\label{Neweqn2paper}
\sum_{n\ge 1}\frac{q^{2n^2-n}}{\left(-q\right)_{2n}}=\sum_{n\in \mathbb{Z}}\textnormal{sgn}(n)q^{\frac{n(3n-1)}{2}}.
\end{equation}
First note that
\[
\sum_{n\in \mathbb{Z}}\textnormal{sgn}(n)q^{\frac{n(3n-1)}{2}}=\sum_{n\ge 1}\left(1-q^n\right)q^{\frac{n(3n-1)}{2}}=-\sum_{n\ge 1}\frac{(-1)^nq^{\frac{n(n+1)}{2}}}{\left(-q\right)_n},
\]
by setting $\zeta=-1$ in \Cref{Andbook}. %we get
%\[
%1-\sum_{n\ge 1}\left(1-q^n\right)q^{\frac{n(3n-1)}{2}}=1+\sum_{n\ge 1}\frac{(-1)^nq^{\frac{n(n+1)}{2}}}{\left(-q\right)_n}.
%\]
%This implies that
%\begin{equation*}
%\sum_{n\ge 1}\left(1-q^n\right)q^{\frac{n(3n-1)}{2}}=-\sum_{n\ge 1}\frac{(-1)^nq^{\frac{n(n+1)}{2}}}{\left(-q\right)_n}.
%\end{equation*}
To prove \eqref{Neweqn2paper}, it thus suffices to show that
\begin{equation*}%\label{lem1eqn1}
\sum_{n\ge 1}\frac{q^{2n^2-n}}{\left(-q\right)_{2n}}=-\sum_{n\ge 1}\frac{(-1)^nq^{\frac{n(n+1)}{2}}}{\left(-q\right)_n}.
\end{equation*}
To show this, we split the left-hand side depending on the parity of $n$ and then use \Cref{kang}.\qed % and finally after rewriting the terms in the resulting function, we conclude the proof of \eqref{Neweqn2paper}. Finally, combining \eqref{Neweqn1paper} and \eqref{Neweqn2paper}, the claim follows. \qed %obtain

We now determine the asymptotics of $\overline{p}_{\od}(n)$ as $n\to\infty$. For this, we prove the monotonicity of $\overline{p}_{\od}(n)$. For $A(q)=\sum_{n\ge 0}a(n)q^n$, by $A(q)\succeq 0$, we mean that $a(n)\ge 0$ for $n\in \mathbb{N}_0$. 
\begin{lemma}\label{monotone1}
	For $n\in \mathbb{N}_0$, we have %$\overline{p}_{\od}(1)>0$ and
	 $\overline{p}_{\od}(n+1)\ge \overline{p}_{\od}(n)$.
\end{lemma}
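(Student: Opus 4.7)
\emph{Proof approach for \Cref{monotone1}.}
The plan is to exhibit an explicit injection $\phi$ from the set $\mathcal{O}_n$ of restricted overpartitions counted by $\overline{p}_{\od}(n)$ into $\mathcal{O}_{n+1}$. This combinatorial route is cleaner than trying to read off monotonicity from the three-term identity in \Cref{thm1}, whose summands carry signs that do not make positivity of coefficient differences transparent.

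Given $\lambda\in\mathcal{O}_n$, I would split into two cases according to whether $1$ is a part of $\lambda$. If $1$ is not a part, let $\phi(\lambda)$ be obtained by adjoining a new non-overlined $1$ to $\lambda$. If $1$ is a part, then by the defining restrictions ($1$ being odd and the smallest part) it occurs exactly once, so I remove that $1$ and adjoin a new non-overlined $2$, and define $\phi(\lambda)$ to be the resulting object. In both cases the total weight increases by exactly $1$.

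The main verification is admissibility of $\phi(\lambda)$ and injectivity of $\phi$. In the first case $\phi(\lambda)$ has smallest part $1$ (odd, appearing once, non-overlined) and every other part inherits its overlining from $\lambda$; in the second case $\phi(\lambda)$ has smallest part $2$ (even, hence non-overlined) and the odd parts of $\lambda$ lying above $1$, which are still not the smallest, keep whatever overlining they had. Injectivity is immediate because the image of the first case consists of elements of $\mathcal{O}_{n+1}$ with smallest part $1$ and the image of the second case consists of elements with smallest part $2$, so the two images are disjoint; within each case $\phi$ has an obvious left inverse (delete the new $1$; respectively, delete one copy of $2$ and insert a $1$).

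The only point requiring care is the bookkeeping of overlining in the second case, specifically when the second-smallest part of $\lambda$ is an odd part whose first occurrence is overlined: one must check that this overlining is still legal in $\phi(\lambda)$, and it is, since that part is no longer the smallest. I do not anticipate any deeper obstacle; once these checks are in place, the inequality $\overline{p}_{\od}(n+1)\ge \overline{p}_{\od}(n)$ follows at once.
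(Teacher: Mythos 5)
Your injection is correct, and it takes a genuinely different route from the paper. The paper argues analytically: starting from \eqref{eqn2} it reduces the claim to $(1-q)\overline{P}_{\textup{od}}(q)\succeq 0$, and then to the coefficientwise nonnegativity of $\left(1-q^2\right)\frac{\left(-q^{2n+3};q^2\right)_{\infty}}{\left(q^{2n+2}\right)_{\infty}}-1+q^2$ for each $n\in\mathbb{N}$, which is expanded via \Cref{Euler} into manifestly nonnegative series. Your map $\phi$ (adjoin a non-overlined $1$ if $1\notin\lambda$; otherwise replace the unique non-overlined $1$ by a non-overlined $2$) does land in $\mathcal{O}_{n+1}$: in the first case the new smallest part $1$ is odd, non-overlined, and occurs exactly once, and no other part becomes smallest; in the second case the new smallest part is the even, non-overlined $2$, and every overlined odd part of $\lambda$ remains legal because the smallest part of $\lambda$ (namely $1$) was necessarily non-overlined, so overlines only sit on non-smallest odd parts, which stay non-smallest. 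The two images are separated by whether $1$ is a part, and your left inverses are well defined since all copies of $2$ in a case-two image are non-overlined. As a sanity check on $n=6$, the seven overpartitions listed in the introduction map to $6+1$, $5+2$, $\overline{5}+2$, $4+2+1$, $3+2+2$, $\overline{3}+2+2$, $2+2+2+1$, which are admissible and distinct. Your argument is more elementary and explains the monotonicity combinatorially; the paper's argument avoids case analysis of the combinatorial definition and reuses the same $q$-series template for \Cref{monotone2}.
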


\begin{proof}
Proving $\overline{p}_{\od}(n+1)\ge \overline{p}_{\od}(n)$ for $n\in \mathbb{N}_0$ is equivalent to showing that $(1-q)\overline{P}_{\textup{od}}(q)\succeq 0$. By \eqref{eqn2}, we have 
\begin{align*}
\overline{P}_{\textup{od}}(q)%&=\frac{q\left(-q;q^2\right)_{\infty}}{\left(q^2,q^3;q^2\right)_{\infty}}\sum_{n\ge 0}\!\frac{\left(q^2,q^3;q^2\right)_{n}q^{2n}}{\left(-q^3;q^2\right)_{n}}=\frac{q\left(-q;q^2\right)_{\infty}}{\left(q^2\right)_{\infty}}\sum_{n\ge 0}\frac{\left(q^2\right)_{2n} q^{2n}}{\left(-q^3;q^2\right)_n}\\
&=q\left(1+q\right)\sum_{n\ge 0}\frac{\left(-q^{2n+3};q^2\right)_{\infty} q^{2n}}{\left(q^{2n+2}\right)_{\infty}}.
\end{align*}
Thus, to prove $(1-q)\overline{P}_{\textup{od}}(q)\succeq 0$, it suffices to show that
\begin{equation}\label{monotone1eqn1}
\left(1-q^2\right)\sum_{n\ge 0}\frac{\left(-q^{2n+3};q^2\right)_{\infty} q^{2n}}{\left(q^{2n+2}\right)_{\infty}}=\frac{\left(-q^3;q^2\right)_{\infty}}{\left(q^3\right)_{\infty}}+\left(1-q^2\right)\sum_{n\ge 1}\frac{\left(-q^{2n+3};q^2\right)_{\infty} q^{2n}}{\left(q^{2n+2}\right)_{\infty}}\succeq 0.
%\left(1-q^2\right)\sum_{n\ge 0}\frac{\left(-q^{2n+3};q^2\right)_{\infty} q^{2n}}{\left(q^{2n+2}\right)_{\infty}}\succeq 0.
\end{equation}
%We have {\bf \color{blue} K: Moved a step in the details file.}
%\begin{align*}
%\left(1-q^2\right)\sum_{n\ge 0}\frac{\left(-q^{2n+3};q^2\right)_{\infty} q^{2n}}{\left(q^{2n+2}\right)_{\infty}}&=\frac{\left(1-q^2\right)\left(-q^3;q^2\right)_{\infty}}{\left(q^2\right)}+\left(1-q^2\right)\sum_{n\ge 1}\frac{\left(-q^{2n+3};q^2\right)_{\infty} q^{2n}}{\left(q^{2n+2}\right)_{\infty}}\\
%&=\frac{\left(-q^3;q^2\right)_{\infty}}{\left(q^3\right)_{\infty}}+\left(1-q^2\right)\sum_{n\ge 1}\frac{\left(-q^{2n+3};q^2\right)_{\infty} q^{2n}}{\left(q^{2n+2}\right)_{\infty}}.
%\end{align*}
Note that $\frac{(-q^3;q^2)_{\infty}}{(q)^3_{\infty}}\succeq 0$. Note that \eqref{monotone1eqn1} follows if we show that, for $n\in \mathbb{N}$, 
\begin{equation}\label{monotone1eqn2}
\left(1-q^2\right)\frac{\left(-q^{2n+3};q^2\right)_{\infty} }{\left(q^{2n+2}\right)_{\infty}}-1+q^2\succeq 0.
\end{equation}
Indeed using this, we see that 
\[
\left(1-q^2\right)\sum_{n\ge 1}\frac{\left(-q^{2n+3};q^2\right)_{\infty} q^{2n}}{\left(q^{2n+2}\right)_{\infty}}\succeq \left(1-q^2\right)\sum_{n\ge 1}q^{2n}%=\left(1-q^2\right)\frac{q^2}{1-q^2}
=q^2\succeq 0.
\]

To prove \eqref{monotone1eqn2}, we use \Cref{Euler}, to write
\begin{align*}
\left(1-q^2\right)\frac{\left(-q^{2n+3};q^2\right)_{\infty} }{\left(q^{2n+2}\right)_{\infty}}-1+q^2%=\left(1-q^2\right)\left(\frac{\left(-q^{2n+3};q^2\right)_{\infty} }{\left(q^{2n+2}\right)_{\infty}}-1\right)\\
%&=\left(1-q^2\right)\left(\sum_{r,s\ge 0}\frac{q^{(2n+3)r+r(r-1)} q^{(2n+2)s}}{\left(q^2;q^2\right)_r\left(q\right)_s}-1\right)
&=\left(1-q^2\right)\left(\sum_{r,s\ge 0}\frac{q^{r^2+\left(2n+2\right)(r+s)}}{\left(q^2;q^2\right)_r\left(q\right)_s}-1\right)\\
%&=\left(1-q^2\right)\left(1+\sum_{r\ge 1}\frac{q^{r^2+\left(2n+2\right)r}}{\left(q^2;q^2\right)_r}+\sum_{\substack{r\ge 0\\s\ge 1}}\frac{q^{r^2+\left(2n+2\right)(r+s)}}{\left(q^2;q^2\right)_r\left(q\right)_s}-1\right)\\
&=\sum_{r\ge 1}\frac{q^{r^2+\left(2n+2\right)r}}{\left(q^4;q^2\right)_{r-1}}+\left(1+q\right)\sum_{\substack{r\ge 0\\s\ge 1}}\frac{q^{r^2+\left(2n+2\right)(r+s)}}{\left(q^2;q^2\right)_r\left(q^2\right)_{s-1}}\succeq 0.
\end{align*}
This finishes the proof.
\end{proof}

%We are now ready to prove \Cref{thm1a}. 

%We now determine the asymptotic main term of $\overline{p}_{\od}(n)$, as $n\to \infty$.

\begin{proof}[Proof of \Cref{thm1a}] 

We first note that $\overline{p}_{\od}(n)$ is non-negative because it counts restricted overpartitions and by \Cref{monotone1}, it is weakly increasing. Thus it remains to determine the asymptotics of $\overline{P}_{\od}(q)$. With $q=e^{-z}$ and as $z\to 0$ in $R_{\Delta}$, we show that
\begin{equation}\label{boundW}
\overline{P}_{\textup{od}}(q)\sim \frac{z^{\frac 32}e^{\frac{5\pi^2}{24z}}}{\sqrt{2\pi}}.
\end{equation}
We split $\overline{P}_{\od}(q)$ as in \Cref{thm1}. Using \Cref{EM1}, the final term can be bounded as  
\[
-\!\frac{1+q}{q}\sum_{n\in \mathbb{Z}}\textnormal{sgn}(n)q^{\frac{n(3n-1)}{2}}\ll \frac{1}{\sqrt{z}}.%\ll \sqrt{z}e^{\frac{5\pi^2}{24z}}.
\]
The rest can be written as %{\bf \color{blue} K: The rest two terms in \Cref{thm1} of the paper:
\[
%-\frac{(1+q)\left(-q;q^2\right)_{\infty}}{q\left(q\right)_{\infty}}W_1(q)\!+\!\frac{(1+q)\left(-q;q^2\right)_{\infty}}{q\left(q\right)_{\infty}}=
\frac{(1+q)\left(-q;q^2\right)_{\infty}}{q(q)_{\infty}}\left(1-W_1(q)\right).
\]
%}
%\[
%\frac{(1+q)\left(-q;q^2\right)_{\infty}}{q(q)_{\infty}}\left(1-W_1(q)\right).
%\]
%Using \Cref{L:q-Pochhammer asymptotics}, we obtain %{\bf \color{blue} K: placed steps in the details file.} %Write %{\bf \color{blue} K: modified the last two steps.}
%\begin{align*}%\label{boundW0}
%\frac{\left(-q;q^2\right)_{\infty}}{(q)_{\infty}}%&=\frac{\os{=(-q)_{\infty}}{\overbrace{\left(-q;q^2\right)_{\infty}\left(-q^2;q^2\right)_{\infty}}}}{\left(-q^2;q^2\right)_{\infty}(q)_{\infty}}=\frac{(-q)_{\infty}(q)_{\infty}\left(q^2;q^2\right)_{\infty}}{\left(-q^2;q^2\right)_{\infty}\left(q^2;q^2\right)_{\infty}(q)^2_{\infty}}
%=\frac{\left(q^2;q^2\right)^2_{\infty}}{\left(q^4;q^4\right)_{\infty}(q)^2_{\infty}}%\nonumber\\
%&\sim \left(\sqrt{\frac{2\pi}{2z}}\right)^2\sqrt{\frac{4z}{2\pi}}\left(\sqrt{\frac{z}{2\pi}}\right)^2e^{\frac{\pi^2}{z}\left(-\frac{2}{12}+\frac{1}{24}+\frac{2}{6}\right)}=
%\sim \sqrt{\frac{z}{2\pi}}e^{\frac{5\pi^2}{24z}}.
%\end{align*}
%using \Cref{L:q-Pochhammer asymptotics} in the penultimate step. 
%{\bf \color{blue} K: moved intermediate steps into details file.} %Now, 
%\[
%\frac{1+q}{q}=1+q^{-1}=1+e^z=2+O(z).
%\]
%Combining this and %\eqref{boundW0}, 
%{\bf \color{blue} K: Kept this one, deleted the previous one, accordingly modified in the details file.} 
Using \Cref{L:q-Pochhammer asymptotics}, we have, as $z\to 0$ in $R_{\Delta}$, %we have %{\bf \color{blue} K: added the following.}
\begin{equation*}%\label{boundW0a}
\frac{(1+q)\left(-q;q^2\right)_{\infty}}{q(q)_{\infty}}\sim %2\sqrt{\frac{z}{2\pi}}e^{\frac{5\pi^2}{24z}}=
\sqrt{\frac{2z}{\pi}}e^{\frac{5\pi^2}{24z}}.
\end{equation*}

So to prove \eqref{boundW}, we need to show that %{\bf \color{blue} K: modified the following.}
\begin{equation}\label{boundW1}
1-W_1(q)\sim \frac z2.
\end{equation}
From \cite[Theorem 2.3]{CFLZ}, we have
\begin{align}\label{boundW2}
W_1(q)%&=\sum_{\substack{n\ge 0\\|j|\le n}}(-1)^{n+j}q^{2n^2+n-j^2}\left(1-q^{2n+1}\right)\nonumber\\
&=\sum_{\substack{n\ge 0}}(-1)^{n}q^{2n^2+n}\left(1-q^{2n+1}\right)+2\sum_{\substack{n\ge 0\\1\le j\le n}}(-1)^{n+j}q^{2n^2+n-j^2}\left(1-q^{2n+1}\right).
\end{align}
The first summand on the right-hand side of \eqref{boundW2} may be written as %becomes%, write
%\[
%2n^2+n=2\left(n+\frac 14\right)^2-\frac 18,\quad 2n^2+3n+1=2\left(n+\frac 34\right)^2-\frac 18.
%\]
%Thus, we have %{\bf \color{blue} K: modified the final two steps with Legendre symbol.}%{\bf \color{blue} K: Here what I get
\begin{align}\label{boundW3}
%&\sum_{\substack{n\ge 0}}(-1)^{n}q^{2n^2+n}\left(1-q^{2n+1}\right)=\sum_{\substack{n\ge 0}}(-1)^{n}\left(q^{2n^2+n}-q^{2n^2+3n+1}\right)=q^{-\frac 18}\sum_{\substack{n\ge 0}}(-1)^{n}\left(q^{2\left(n+\frac 14\right)^2}-q^{2\left(n+\frac 34\right)^2}\right)\nonumber\\
%&=q^{-\frac 18}\left(\sum_{n\ge 0}\left(q^{2\left(2n+\frac 14\right)^2}-q^{2\left(2n+\frac 34\right)^2}\right)-\sum_{n\ge 0}\left(q^{2\left(2n+1+\frac 14\right)^2}-q^{2\left(2n+1+\frac 34\right)^2}\right) \right)\nonumber\\
&q^{-\frac 18}\sum_{n\ge 0}\left(q^{2\left(2n+\frac 14\right)^2}-q^{2\left(2n+\frac 34\right)^2}-q^{2\left(2n+\frac 54\right)^2}+q^{2\left(2n+\frac 74\right)^2}\right)\nonumber\\
&\hspace{1 cm}=q^{-\frac 18}\sum_{j\in\{1,3,5,7\}}\leg{2}{j}\sum_{n\ge 0}q^{8\left(n+\frac j8\right)^2}%\os{q=e^{-z}}
=e^{\frac z8}\sum_{j\in\{1,3,5,7\}}\leg{2}{j}\sum_{n\ge 0}g\left(\left(n+\frac j8\right)\sqrt{z}\right),
%&=q^{-\frac 18}\sum_{\d\in\{0,1\}}(-1)^{\d}\sum_{n\ge 0}\left(q^{2\left(2n+\d+\frac 14\right)^2}-q^{2\left(2n+\d+\frac 34\right)^2}\right).
\end{align}
%Thus, in \eqref{boundW3}, the range set $\d\in\{1,2\}\mapsto \d\in\{0,1\}$. 
%}
%\begin{align}\label{boundW3}
%&\sum_{\substack{n\ge 0}}(-1)^{n}q^{2n^2+n}\left(1-q^{2n+1}\right)=q^{-\frac 18}\sum_{\d\in\{0,1\}}(-1)^{\d}\sum_{n\ge 0}\left(q^{2\left(2n+\d+\frac 14\right)^2}-q^{2\left(2n+\d+\frac 34\right)^2}\right)\nonumber\\
%&\hspace{2.3 cm}\os{q=e^{-z}}=e^{\frac z8}\sum_{\d\in\{0,1\}}(-1)^{\d}\sum_{n\ge 0}\left(g\left(\left(n+\frac{\d}{2}+\frac 18\right)\sqrt{z}\right)-g\left(\left(n+\frac{\d}{2}+\frac 38\right)\sqrt{z}\right)\right),
%\end{align}
where $g(x):=e^{-8x^2}$. Applying \Cref{EM1} with $N=3$ and $z\mapsto \sqrt{z}$, we have %{\bf \color{blue} K: now changed according with Legendre symbol.}
\begin{align}\label{boundW4}
&\sum_{j\in\{1,3,5,7\}}\leg{2}{j}\sum_{n\ge 0}g\left(\left(n+\frac j8\right)\sqrt{z}\right)\nonumber\\[-7 pt]
&\hspace{3 cm}=\sum_{j\in\{1,3,5,7\}}\!\!\leg{2}{j}\Biggl(\frac{1}{\sqrt{z}}\int_{0}^{\infty}\!\!\!g(w)dw-\!\!\sum_{n=0}^{3}\frac{B_{n+1}\left(\frac j8\right)g^{(n)}(0) z^{\frac n2}}{(n+1)!}+O\left(z^2\right)\Biggr)\nonumber\\
%&=-\!\!\!\!\!\!\!\!\sum_{j\in\{1,3,5,7\}}\!\!\leg{2}{j}\sum_{n=0}^{3}\frac{B_{n+1}\left(\frac j8\right)g^{(n)}(0) z^{\frac n2}}{(n+1)!}\!+\!O\left(z^2\right)\nonumber\\
%\intertext{\bf \color{blue} K: The following equation does not in the previous line, goes beyond margin.}
&\hspace{3 cm}=-\!\!\!\!\!\!\sum_{j\in\{1,3,5,7\}}\!\leg{2}{j}\sum_{n=0}^{1}\frac{B_{2n+1}\left(\frac j8\right)g^{\left(2n\right)}(0) z^{n}}{(2n+1)!}+O\left(z^2\right)%\nonumber\\
%&=-\sum_{n=0}^{1}\frac{\left(B_{2n+1}\left(\frac 18\right)-B_{2n+1}\left(\frac 38\right)-B_{2n+1}\left(\frac 58\right)+B_{2n+1}\left(\frac{7}{8}\right)\right) g^{(2n)}(0)z^n}{(2n+1)!}+O\left(z^2\right)
=O\left(z^{2}\right).
%&\sum_{n\ge 0}\left(g\left(\left(n+\frac{\d}{2}+\frac 18\right)\sqrt{z}\right)-g\left(\left(n+\frac{\d}{2}+\frac 38\right)\sqrt{z}\right)\right)\\
%&\hspace{3 cm}
%=-\sum_{n=0}^{3}\frac{\left(B_{n+1}\left(\frac{\d}{2}+\frac 18\right)-B_{n+1}\left(\frac{\d}{2}+\frac 38\right)\right) g^{(n)}(0)}{(n+1)!}z^{\frac n2}+O\left(z^2\right)\\
%\intertext{\bf \color{blue} K: The following step you added.}
%&\hspace{3 cm}
%=-\sum_{n=0}^{1}\frac{\left(B_{2n+1}\left(\frac{\d}{2}+\frac 18\right)-B_{2n+1}\left(\frac{\d}{2}+\frac 38\right)\right) g^{(2n)}(0)}{(2n+1)!}z^{n}+O\left(z^2\right),
\end{align}
Here we use that $\sum_{j\in \{1,3,5,7\}}\leg{2}{j}=0$, note that $g$ is an even function, and %. Consequently, we have {\bf \color{blue} K: I fixed the typos that I have made. But now as $\d\in\{0,1\}$ the calculations changed and please check \eqref{boundW4}.}
%\begin{align}\label{boundW4}
%&\sum_{\d\in\{0,1\}}(-1)^{\d}\sum_{n\ge 0}\left(g\left(\left(n+\frac{\d}{2}+\frac 18\right)\sqrt{z}\right)-g\left(\left(n+\frac{\d}{2}+\frac 38\right)\sqrt{z}\right)\right)\nonumber\\
%&=\sum_{n=0}^{1}\frac{\left(B_{2n+1}\left(\frac 18\right)\!-\!B_{2n+1}\left(\frac 38\right)\right) g^{\left(2n\right)}(0)z^n}{(2n+1)!}\!-\!\sum_{n=0}^{1}\frac{\left(B_{2n+1}\left(\frac 12\!+\!\frac 18\right)\!-\!B_{2n+1}\left(\frac 12\!+\!\frac 38\right)\right) g^{(2n)}(0)z^n}{(2n+1)!}\nonumber\\
%&\hspace{13 cm}+O\left(z^2\right)\nonumber\\
%&=\sum_{n=0}^{1}\frac{\left(B_{2n+1}\left(\frac 18\right)-B_{2n+1}\left(\frac 38\right)-B_{2n+1}\left(\frac 58\right)+B_{2n+1}\left(\frac{7}{8}\right)\right) g^{(2n)}(0)z^n}{(2n+1)!}+O\left(z^2\right)\nonumber\\
%&=O\left(z^{2}\right),
%\end{align}
compute %using (computed with MAPLE) %{\bf \color{blue} K: fixed the typos here. also check the following.}
\[
\sum_{j\in \{1,3,5,7\}}\leg{2}{j}B_{2n+1}\pa{j}{8}%\frac{\left(B_{2n+1}\left(\frac 18\right)-B_{2n+1}\left(\frac 38\right)-B_{2n+1}\left(\frac 58\right)+B_{2n+1}\left(\frac{7}{8}\right)\right) g^{(2n)}(0)}{(2n+1)!}
=%\begin{cases}
0\ \ \text{for}\ n\in\{0,1\}.
%-1& \quad \text{if}\ n=1
%\end{cases}
\]
%in the penultimate step. 
%Plugging \eqref{boundW4} into \eqref{boundW3}, we obtain 
%{\bf KB: I don't think this extra step is needed. \color{blue} K: Do you mean the first step in the following equation? If yes, then I agree with you. If not, then I have a point because later we need to refer the following bound. \color{black} KB: I meant the whole display. \color{blue} K: Okay deleted now.}
%\begin{equation}\label{boundW5}
%\sum_{\substack{n\ge 0}}(-1)^{n}q^{2n^2+n}\left(1-q^{2n+1}\right)=e^{\frac z8}O\left(z^{2}\right)=O\left(z^{2}\right).
%\end{equation}
%{\bf \color{blue} K: Up to here, changes made.}

Next, shifting $n\mapsto n+j$ and then $j\mapsto j+1$, the second sum in \eqref{boundW2} becomes %{\bf \color{blue} K: moved a step to the details file.} %{\bf \color{blue} K: Placed intermediate steps in the details file.}
%\begin{align}\label{boundW6}
%2\sum_{\substack{n\ge 0\\j\ge 1}}(-1)^nq^{2(n+j)^2+n+j-j^2}\left(1-q^{2n+2j+1}\right).
%\end{align}
%Now write
%\begin{align*}
%2(n+j)^2\!+\!n\!+\!j\!-\!j^2&=\!2n^2\!+\!4nj\!+\!j^2\!+\!n\!+\!j=\!2\left(n\!+\!\frac 14\right)^2\!\!\!\!+\!4\left(n\!+\!\frac 14\right)j\!+\!j^2\!-\!\frac 18,\\
%2(n+j)^2\!+\!3n\!+\!3j\!-\!j^2\!+\!1&=\!2n^2\!+\!4nj\!+\!j^2\!+\!3n\!+\!3j\!+\!1\!=\!2\left(n+\frac 34\right)^2\!\!\!+\!4\left(n+\frac 34\right)j\!+\!j^2\!-\!\frac 18.
%\end{align*}
%Then \eqref{boundW6} becomes
\begin{align*}%\label{boundW7}
%&2q^{-\frac 18}\sum_{\substack{n\ge 0\\j\ge 1}}(-1)^n\left(q^{2\left(n+\frac 14\right)^2+4\left(n+\frac 14\right)j+j^2}-q^{2\left(n+\frac 34\right)^2+4\left(n+\frac 34\right)j+j^2}\right)\nonumber\\
&2q^{-\frac 18}\sum_{n,j\ge 0}(-1)^n\left(q^{2\left(n+\frac 14\right)^2+4\left(n+\frac 14\right)\left(j+1\right)+\left(j+1\right)^2}-q^{2\left(n+\frac 34\right)^2+4\left(n+\frac 34\right)\left(j+1\right)+\left(j+1\right)^2}\right)\nonumber\\
%\intertext{\bf \color{blue} K: From here on, your calculations added.}
&\hspace{2 cm}=2q^{-\frac 18}\sum_{\d,r\in\{0,1\}}(-1)^{\d+r}\sum_{n,j\ge 0}q^{2\left(2n+\d+\frac{1+2r}{4}\right)^2+4\left(2n+\d+\frac{1+2r}{4}\right)(j+1)+(j+1)^2}\nonumber\\
%&=2q^{-\frac 18}\Biggl(\sum_{n,j\ge 0}\left(q^{2\left(2n+\frac 14\right)^2+4\left(2n+\frac 14\right)\left(j+1\right)+\left(j+1\right)^2}-q^{2\left(2n+\frac 34\right)^2+4\left(2n+\frac 34\right)\left(j+1\right)+\left(j+1\right)^2}\right)\nonumber\\
%&\hspace{1 cm}-\sum_{n,j\ge 0}\left(q^{2\left(2n+1+\frac 14\right)^2+4\left(2n+1+\frac 14\right)\left(j+1\right)+\left(j+1\right)^2}-q^{2\left(2n+1+\frac 34\right)^2+4\left(2n+1+\frac 34\right)\left(j+1\right)+\left(j+1\right)^2}\right)\Biggr)\nonumber\\
%&=2q^{-\frac 18}\sum_{\d\in\{0,1\}}(-1)^{\d}\nonumber\\
%&\hspace{1 cm}\times\sum_{n,j\ge 0}\left(q^{2\left(2n+\d+\frac 14\right)^2+4\left(2n+\d+\frac 14\right)\left(j+1\right)+\left(j+1\right)^2}-q^{2\left(2n+\d+\frac 34\right)^2+4\left(2n+\d+\frac 34\right)\left(j+1\right)+\left(j+1\right)^2}\right)\nonumber\\
%&=2q^{-\frac 18}\sum_{\d\in\{0,1\}}(-1)^{\d}\nonumber\\
%&\hspace{1 cm}\times\sum_{n,j\ge 0}\left(q^{8\left(n+\frac{\d}{2}+\frac 18\right)^2+8\left(n+\frac{\d}2+\frac 18\right)\left(j+1\right)+\left(j+1\right)^2}-q^{8\left(n+\frac{\d}2+\frac 38\right)^2+8\left(n+\frac{\d}2+\frac 38\right)\left(j+1\right)+\left(j+1\right)^2}\right)\nonumber\\
&\hspace{2 cm}%\os{q=e^{-z}}
=2e^{\frac z8}\sum_{\d,r\in\{0,1\}}(-1)^{\d+r}\sum_{n,j\ge 0}f\left(\left(n+\frac{\d}2+\frac{1+2r}8,j+1\right)\sqrt{z}\right),
%&=2q^{-\frac 18}\sum_{\substack{n\ge 0\\j\ge 1}}(-1)^n\left(q^{2\left(n+j+\frac 14\right)^2}-q^{2\left(n+j+\frac 34\right)^2}\right)=2q^{-\frac 18}\sum_{\substack{n,j\ge 0}}(-1)^n\left(q^{2\left(n+j+\frac 54\right)^2}-q^{2\left(n+j+\frac 74\right)^2}\right)\nonumber\\
%&=2q^{-\frac 18}\left(\sum_{\substack{n,j\ge 0}}\left(q^{2\left(2n+j+\frac 54\right)^2}-q^{2\left(2n+j+\frac 74\right)^2}\right)-\sum_{\substack{n,j\ge 0}}\left(q^{2\left(2n+j+\frac 94\right)^2}-q^{2\left(2n+j+\frac{11}4\right)^2}\right)\right)
\end{align*}
where $f(x,y):=e^{-8x^2-8xy-y^2}$. Applying \Cref{EM2} yields
\begin{align*}
&2e^{\frac z8}\sum_{\d,r\in\{0,1\}}(-1)^{\d+r}\Biggl(\frac 1z\int_{0}^{\infty}\int_{0}^{\infty}f(x,y) dx dy-\frac{1}{\sqrt{z}}\Biggl(\sum_{n=0}^{N}\frac{B_{n+1}\left(\frac{\d}{2}+\frac{1+2r}{8}\right)z^{\frac n2}}{(n+1)!}\int_{0}^{\infty}f^{(n,0)}(0,y) dy\\
&+\sum_{m=0}^{N}\frac{B_{m+1}\left(1\right)z^{\frac m2}}{(m+1)!}\int_{0}^{\infty}f^{(0,m)}(x,0) dx\Biggr)+\sum_{n+m<N}\frac{B_{n+1}\left(\frac{\d}{2}+\frac{1+2r}{8}\right)B_{m+1}(1)}{(n+1)!(m+1)!}f^{(n,m)}(0,0) z^{\frac{n+m}{2}}\\[-10 pt]
&\hspace{13.5 cm}+O\left(z^{\frac N2}\right)\Biggr).
\end{align*}
Now those parts cancel that are independent of $\d,r$ because of the $(-1)^{\d+r}$. Moreover $f^{(n,m)}(0,0)=0$ unless $n+m$ is even. Thus with $N=4$, the above becomes  %{\bf \color{blue} K: To fix the bracket issue in case it stays in the main file. \color{black} KB: I think it does. \color{blue} K: So, need to fix the bracket issue which I could not even using chatgpt. \color{black} KB: I could not either. Please give Jan as last thing to do for today. \color{blue} K: Now the bracket issue is fixed by Jan.}
\begin{align}\label{boundW8}
&2e^{\frac z8}\sum_{\d,r\in\{0,1\}}(-1)^{\d+r}\vast( -\sum_{n=0}^{4}\frac{B_{n+1}\left(\frac{\d}{2}+\frac{1+2r}{8}\right)\!z^{\frac{n-1}2}}{(n+1)!}\int_{0}^{\infty}f^{(n,0)}(0,y) dy\nonumber\\[-13 pt]
&\hspace{4 cm}\left.+\sum_{\substack{n+m\le 3\\ 2\mid (n+m)}}\frac{B_{n+1}\left(\frac{\d}{2}+\frac{1+2r}{8}\right)B_{m+1}(1)}{(n+1)!(m+1)!}f^{(n,m)}(0,0) z^{\frac{n+m}{2}}\right)+O\left(z^{2}\right).
\end{align}
We compute %{\bf \color{blue} K: Moved steps in the details file.}
\begin{equation}\label{boundW9}
\sum_{\d,r\in\{0,1\}}(-1)^{\d+r}B_{n+1}\left(\frac{\d}{2}+\frac{1+2r}{8}\right)=\begin{cases}
0 &\ \text{if}\ n \ \text{is even},\\
\frac 14 &\ \text{if}\ n=1,\\
-\frac{11}{128} &\ \text{if}\ n=3.
\end{cases}
\end{equation}
%\begin{equation*}%\label{boundW9}
%\sum_{\d,r\in\{0,1\}}(-1)^{\d+r}B_{n+1}\left(\frac{\d}{2}+\frac{1+2r}{8}\right)=B_{n+1}\left(\frac 18\right)-B_{n+1}\left(\frac 38\right)-B_{n+1}\left(\frac 58\right)+B_{n+1}\left(\frac 78\right).
%\end{equation*}
%Now $B_{\ell}(1-x)=(-1)^{\ell}B_{\ell}(x)$. Thus we obtain %{\bf \color{blue} K: Here I get different values.}
%\begin{align}\label{boundW9}
%&\sum_{\d,r\in\{0,1\}}(-1)^{\d+r}B_{n+1}\left(\frac{\d}{2}+\frac{1+2r}{8}\right)\nonumber\\[-18pt]
%&
%\hspace{3 cm}=\Bigl(1-(-1)^n\Bigr)\left(B_{n+1}\left(\frac 18\right)-B_{n+1}\left(\frac 38\right)\right)=\begin{cases}
%0 &\ \text{if}\ n \ \text{is even},\\
%\frac 14 &\ \text{if}\ n=1,\\
%-\frac{11}{128} &\ \text{if}\ n=3.
%\end{cases}
%\end{align}
Note that $n$ odd, $0\le n+m\le 3$, and $2\mid\!(n+m)$ implies that $n=m=1$. Thus \eqref{boundW8} %(with $n\mapsto 2n+1$ in the first sum and $n=m=1$ in the second sum)
 becomes %{\bf KB: To here.} %{\bf \color{blue} KB: Placed the steps into the details file.}
\begin{align}\label{boundW9a}
%&2e^{\frac z8}\sum_{\d,r\in\{0,1\}}(-1)^{\d+r}\Biggl(-\sum_{n=0}^{1}\frac{B_{2n+2}\left(\frac{\d}{2}+\frac{1+2r}{8}\right)\!z^{n}}{(2n+2)!}\int_{0}^{\infty}f^{\left(2n+1,0\right)}(0,y) dy\nonumber\\
%&\hspace{6 cm}+\frac{B_{2}\left(\frac{\d}{2}+\frac{1+2r}{8}\right)B_{2}(1)}{2!\cdot 2!}f^{(1,1)}(0,0) z\Biggr)+O\left(z^{2}\right)\nonumber\\
%&=2e^{\frac z8}\sum_{\d,r\in\{0,1\}}(-1)^{\d+r}\Biggl(-\frac{B_{2}\left(\frac{\d}{2}+\frac{1+2r}{8}\right)}{2}\int_{0}^{\infty}f^{\left(1,0\right)}(0,y) dy-\frac{B_{4}\left(\frac{\d}{2}+\frac{1+2r}{8}\right)\!z}{24}\int_{0}^{\infty}f^{\left(3,0\right)}(0,y) dy\nonumber\\
%&\hspace{7 cm}+\frac{B_{2}\left(\frac{\d}{2}+\frac{1+2r}{8}\right)B_{2}(1)}{4}f^{(1,1)}(0,0) z\Biggr)+O\left(z^{2}\right)\nonumber\\
%&=e^{\frac z8}\sum_{\d,r\in\{0,1\}}(-1)^{\d+r}\Biggl(-B_{2}\left(\frac{\d}{2}+\frac{1+2r}{8}\right)\int_{0}^{\infty}f^{\left(1,0\right)}(0,y) dy-\frac{B_{4}\left(\frac{\d}{2}+\frac{1+2r}{8}\right)\!z}{12}\int_{0}^{\infty}f^{\left(3,0\right)}(0,y) dy\nonumber\\
%&\hspace{7 cm}+\frac{B_{2}\left(\frac{\d}{2}+\frac{1+2r}{8}\right)B_{2}(1)}{2}f^{(1,1)}(0,0) z\Biggr)+O\left(z^{2}\right)\nonumber\\
&e^{\frac z8}\sum_{\d,r\in\{0,1\}}(-1)^{\d+r}\Biggl(-B_{2}\left(\frac{\d}{2}+\frac{1+2r}{8}\right)\int_{0}^{\infty}f^{\left(1,0\right)}(0,y) dy-\frac{B_{4}\left(\frac{\d}{2}+\frac{1+2r}{8}\right)\!z}{12}\int_{0}^{\infty}f^{\left(3,0\right)}(0,y) dy\nonumber\\
&\hspace{7 cm}+\frac{B_{2}\left(\frac{\d}{2}+\frac{1+2r}{8}\right)}{12}f^{(1,1)}(0,0) z\Biggr)+O\left(z^{2}\right).
\end{align}
Computing
\[
\int_{0}^{\infty}f^{\left(1,0\right)}(0,y) dy=-4,\ \int_{0}^{\infty}f^{\left(3,0\right)}(0,y) dy=-64,\ \text{and}\ \ f^{(1,1)}(0,0)=-8,
\]
and using \eqref{boundW9}, \eqref{boundW9a} becomes $1-\frac z2+O(z^2)$.
%\begin{align*}
%&e^{\frac z8}\sum_{\d,r\in\{0,1\}}(-1)^{\d+r}\Biggl(4B_{2}\left(\frac{\d}{2}+\frac{1+2r}{8}\right)+\frac{16B_{4}\left(\frac{\d}{2}+\frac{1+2r}{8}\right)\!z}{3}-\frac{2B_{2}\left(\frac{\d}{2}+\frac{1+2r}{8}\right)\!z}{3}\Biggr)+O\left(z^{2}\right)\\
%&\os{\eqref{boundW9}}=e^{\frac z8}\Biggl(4\frac 14-\frac{16}{3}\frac{11}{128}z-\frac{2}{3}\frac 14 z\Biggr)+O\left(z^{2}\right)=e^{\frac z8}\left(1-\frac{5z}{8}\right)+O\left(z^{2}\right)=1+\frac z8-\frac{5z}{8}+O\left(z^2\right)\\
%&=
%1-\frac z2+O\left(z^2\right).
%\end{align*}
This implies that
\[
2\sum_{\substack{n\ge 0\\1\le j\le n}}(-1)^{n+j}q^{2n^2+n-j^2}\left(1-q^{2n+1}\right)=1-\frac z2+O\left(z^2\right).
\]
Combining this with \eqref{boundW4}, \eqref{boundW3}, %\eqref{boundW5}
 \eqref{boundW2}, and using \Cref{P:CorToIngham}, we obtain the claim.\qedhere  %{\bf \color{blue} K: fixed typo here: $O(z^{\frac 32})\to O(z^2)$.}
\end{proof}

%\begin{proof}

\section{Proof of Theorem \ref{thm2} and Corollary \ref{thm6.2}}\label{sec5}

First we prove Theorem \ref{thm2}. For this, we need the following lemma, which is obtained by using \Cref{BEMeqn} and \Cref{AGLeqn}.

\begin{lemma}\label{theta squared}
We have
\begin{align*}
\sum_{n\ge 1}\frac{(-1)^n \left(q^2;q^2\right)_{n-1} q^{n^2}}{\left(-q\right)_{2n}}=\frac{1}{4}\left(\frac{(q)_{\infty}^2}{(-q)_{\infty}^{2}}-1\right).
\end{align*}
\end{lemma}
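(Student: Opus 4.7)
The plan is to shift the summation index, invoke a suitable specialization of \Cref{BEMeqn}, reduce the resulting Lambert-type series by partial fractions, and then apply \Cref{AGLeqn}. First, shifting $n\mapsto n+1$ in the left-hand side and using $(-q)_{2n+2}=(1+q)(1+q^2)(-q^3,-q^4;q^2)_n$, one rewrites the left-hand side as
\[
L:=\sum_{n\ge 1}\frac{(-1)^n(q^2;q^2)_{n-1}q^{n^2}}{(-q)_{2n}}=-\frac{q}{(1+q)(1+q^2)}\sum_{n\ge 0}\frac{(-1)^n(q^2;q^2)_n q^{n(n+2)}}{(-q^3,-q^4;q^2)_n}.
\]

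Next I would apply \Cref{BEMeqn} with $q\mapsto q^2$ and the parameters $(a,b,c)=(-q,-q^3,-q^2)$, letting $d\to 0$. One checks that $b/a=q^2$, so $(b/a;q^2)_n=(q^2;q^2)_n$, and
\[
(c/d;q^2)_n(ad)^n=(-1)^n q^n\prod_{j=0}^{n-1}(d+q^{2j+2})\ \xrightarrow[d\to 0]{}\ (-1)^n q^{n(n+2)}.
\]
Thus the left-hand side of \Cref{BEMeqn} tends to $\sum_{n\ge 1}\frac{(-1)^n(q^2;q^2)_n q^{n(n+2)}}{(-q^3,-q^4;q^2)_n}$, which is the inner sum in the first display with its $n=0$ term (equal to $1$) removed. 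On the right-hand side of \Cref{BEMeqn}, the factor $\tfrac{adq^{2n}}{1-adq^{2n}}$ vanishes in the limit, and the identity $(-q;q^2)_n/(-q^3;q^2)_{n+1}=(1+q)/[(1+q^{2n+1})(1+q^{2n+3})]$ collapses the result to
\[
-q^3(1-q^2)(1+q)\sum_{n\ge 0}\frac{(-1)^n q^{4n}}{(1+q^{2n+1})(1+q^{2n+3})}.
\]

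The decisive step is the partial-fraction reduction
\[
\frac{1}{(1+q^{2n+1})(1+q^{2n+3})}=\frac{1}{q^{2n+1}(q^2-1)}\left(\frac{1}{1+q^{2n+1}}-\frac{1}{1+q^{2n+3}}\right),
\]
after which shifting $n\mapsto n+1$ in the second sum expresses everything in terms of $S:=\sum_{n\ge 0}\frac{(-1)^n q^{2n+1}}{1+q^{2n+1}}$. Careful bookkeeping yields
\[
\sum_{n\ge 1}\frac{(-1)^n(q^2;q^2)_n q^{n(n+2)}}{(-q^3,-q^4;q^2)_n}=q^{-1}(1+q)(1+q^2)S-1.
\]
Adding back the $n=0$ term of the full $n\ge 0$ sum (which cancels the constant $-1$) and multiplying by $-q/[(1+q)(1+q^2)]$ then gives $L=-S$. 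Finally, \Cref{AGLeqn} rearranges to $(q)_\infty^2/(-q)_\infty^2=1-4S$, whence $L=-S=\tfrac14\bigl((q)_\infty^2/(-q)_\infty^2-1\bigr)$, as required.

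The main obstacle will be the bookkeeping in the partial-fraction step: one has to track all the $\pm q^{-k}$ factors so that the extraneous constant ends up being exactly $-1$, precisely cancelling the $n=0$ correction and leaving a clean multiple of $S$ on the right.
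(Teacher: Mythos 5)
Your proposal is correct and follows exactly the route the paper indicates: the paper states only that the lemma ``is obtained by using \Cref{BEMeqn} and \Cref{AGLeqn},'' and your argument (the $d\to 0$ specialization of \Cref{BEMeqn} with $q\mapsto q^2$, $(a,b,c)=(-q,-q^3,-q^2)$, followed by the partial-fraction telescoping down to the Lambert series $S$ and the rearrangement of \Cref{AGLeqn}) is a valid instantiation of that plan; I checked the prefactor $\frac{(a-b)(d-c)}{ad-b}\to -(1-q^2)$, the collapse via $(-q;q^2)_n/(-q^3;q^2)_{n+1}$, and the final identity $L=-S$, and all the bookkeeping works out.
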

%\begin{proof}
%{\bf KB: Maybe the whole proof can be in the details and here we can just say which lemma. \color{blue} K: move the whole proof in the details file, only mentioned here which lemma is used.}
%Replacing $q$ by $q^2$, dividing both sides by $1-\frac{c}{d}$, then taking $b=-q, c=d=-1$, and letting $a\to0$ in \Cref{BEMeqn} and finally using \Cref{AGLeqn}, we conclude the proof.
%\begin{align*}
%	\sum_{n\ge 1}\frac{(-1)^n \left(q^2;q^2\right)_{n-1}q^{n^2}}{\left(-q\right)_{2n}}=\sum_{n\ge 1}\frac{(-1)^n \left(q^2;q^2\right)_{n-1}q^{n^2}}{\left(-q,-q^2;q^2\right)_{n}}.
%\end{align*}
%\end{proof}

\begin{proof}[Proof of \Cref{thm2}] 
Using \eqref{thm2eqn0}, we have
\begin{align}\label{thm2eqn1}
\overline{P}_{\text{ev}}(q)%&=\sum_{n\ge 0}\frac{\left(-q^{2n+2},-q^{2n+3};q^2\right)_{\infty}q^{2n+1}}{\left(q^{2n+2};q^2\right)_{\infty}}+\sum_{n\ge 1}\frac{\left(-q^{2n},-q^{2n+1};q^2\right)_{\infty}q^{2n}}{\left(q^{2n};q^2\right)_{\infty}}\nonumber\\
%&=\sum_{n\ge 0}\frac{\left(-q^{2n+2},-q^{2n+3};q^2\right)_{\infty}}{\left(q^{2n+2};q^2\right)_{\infty}}q^{2n+1}+\sum_{n\ge 0}\frac{\left(-q^{2n+2},-q^{2n+3};q^2\right)_{\infty}}{\left(q^{2n+2};q^2\right)_{\infty}}q^{2n+2}\nonumber\\
%&=(1+q)\sum_{n\ge 0}\frac{\left(-q^{2n+2},-q^{2n+3};q^2\right)_{\infty}}{\left(q^{2n+2};q^2\right)_{\infty}} q^{2n+1} =\frac{q(1+q)\left(-q^2,-q^3;q^2\right)_{\infty}}{\left(q^2;q^2\right)_{\infty}}\sum_{n\ge 0}\frac{\left(q^2;q^2\right)_{n}q^{2n}}{\left(-q^2,-q^3;q^2\right)_{n}}\nonumber\\
&=\frac{q}{(q)_{\infty}}\sum_{n\ge 0}\frac{\left(q^2;q^2\right)_{n}q^{2n}}{\left(-q^2,-q^3;q^2\right)_{n}}.
\end{align}
%Now from {\bf KB: Can you put the reference on dropbox} \cite[equation (3.18)$_\text{R}$]{A0}, we have
%\begin{align*}%\label{Andrews 3.18}
%\sum_{n\ge0}\frac{(cq;q)_nq^n}{(-aq;q)_n(-bq;q)_n}&=\left(1+a^{-1}\right)\sum_{n\ge0}\frac{\left(-\frac{b}{a}\right)^n(cq;q)_nq^{\frac{n(n+1)}{2}}}{\left(-\frac{cq}{a};q\right)_{n+1}(-bq;q)_n}\nonumber\\
%&\quad-\frac{a^{-1}(cq;q)_{\infty}}{(-aq;q)_{\infty}(-bq;q)_{\infty}}\sum_{n\ge0}\frac{\left(-\frac{b}{a}\right)^nq^{n\frac{(n+1)}{2}}}{\left(-\frac{cq}{a};q\right)_{n+1}}.
%\end{align*}
Changing $q\mapsto q^2$ and then setting $a=c=1$ and $b=q$ in \Cref{lemident2}, we get 
\begin{align*}
\sum_{n\ge 0}\frac{\left(q^2;q^2\right)_{n}q^{2n}}{\left(-q^2,-q^3;q^2\right)_{n}}=2\sum_{n\ge 0}\frac{(-1)^n \left(q^2;q^2\right)_n q^{n^2+2n}}{\left(-q^2;q^2\right)_{n+1}\left(-q^3;q^2\right)_n}-(1+q)(q)_\infty\sum_{n\ge 0}\frac{(-1)^n q^{n^2+2n}}{\left(-q^2;q^2\right)_{n+1}}.
%&\sum_{n\ge 0}\frac{\left(q^2;q^2\right)_{n}q^{2n}}{\left(-q^2,-q^3;q^2\right)_{n}}=2\sum_{n\ge 0}\frac{(-1)^n \left(q^2;q^2\right)_n q^{n^2+2n}}{\left(-q^2;q^2\right)_{n+1}\left(-q^3;q^2\right)_n}-\underbrace{\frac{\left(q^2;q^2\right)_{\infty}}{\left(-q^2;q^2\right)_{\infty}\left(-q^3;q^2\right)_{\infty}}}_{(1+q)(q)_\infty}\sum_{n\ge 0}\frac{(-1)^n q^{n^2+2n}}{\left(-q^2;q^2\right)_{n+1}}.
\end{align*}
Applying this to \eqref{thm2eqn1}, using \eqref{Ramanujanphi} and then invoking Lemma \ref{theta squared}, we conclude the proof.\qedhere
\end{proof}

%{\bf \color{blue} K: marked all the changes made from here on along with your comments on previous file.}

 %(up to the factor $1+q$) and a $q$-series. 

%Our next goal is to study the modularity of 
%\[
%f(q):=\sum_{n\ge 1}\frac{(-1)^n\left(q;q\right)_{n} q^{n^2}}{\left(-q^{n+1};q\right)_n\left(1-q^{2n}\right)}.
%\]

%{\bf \color{blue} K: Numerically, as it seems, $-f(q)$ is multiplicative in the sense that its coefficient sequence $a(n)$ is multiplicative. }

%\section{Proof of \Cref{thm6.2}}\label{sec6}
Next, we determine the asymptotics of $\overline{p}_{\textup{ev}}(n)$. For this, we prove its  monotonicity. %of $\overline{p}_{\textup{ev}}(n)$.  % For this, we require overpartitions of $n$ without repeated odd parts with the following additional restrictions:
\begin{lemma}\label{monotone2}
 We have, %$\overline{p}_{\textnormal{ev}}(1)>0$ and
  for $n\in \mathbb{N}_0$, $\overline{p}_{\textnormal{ev}}(n)\le \overline{p}_{\textnormal{ev}}(n+1)$.
\end{lemma}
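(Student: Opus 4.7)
The plan is to mirror the proof of \Cref{monotone1}: show that $(1-q)\overline{P}_{\textnormal{ev}}(q)\succeq 0$, which is equivalent to the desired monotonicity.

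First, I would combine the two sums in \eqref{thm2eqn0} by shifting $n\mapsto n+1$ in the second sum. The two summands then match at the same index, and factoring out $(1+q)$ yields the compact form
\[
\overline{P}_{\textnormal{ev}}(q)=q(1+q)\sum_{n\ge 0}\frac{\left(-q^{2n+2},-q^{2n+3};q^2\right)_{\infty}q^{2n}}{\left(q^{2n+2};q^2\right)_{\infty}}.
\]
Since $(1-q)(1+q)=1-q^2$, it suffices to show
\[
\left(1-q^2\right)\sum_{n\ge 0}\frac{\left(-q^{2n+2},-q^{2n+3};q^2\right)_{\infty}q^{2n}}{\left(q^{2n+2};q^2\right)_{\infty}}\succeq 0.
\]
The $n=0$ term contributes $\frac{\left(-q^2,-q^3;q^2\right)_{\infty}}{\left(q^4;q^2\right)_{\infty}}$, which is manifestly $\succeq 0$. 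For $n\ge 1$ it then suffices to establish the pointwise bound
\begin{equation}\label{P:Key}
\left(1-q^2\right)\frac{\left(-q^{2n+2},-q^{2n+3};q^2\right)_{\infty}}{\left(q^{2n+2};q^2\right)_{\infty}}-1+q^2\succeq 0,
\end{equation}
because then the $n\ge 1$ part of the sum is $\succeq \sum_{n\ge 1}(1-q^2)q^{2n}=q^2\succeq 0$, and the claim follows.

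To prove \eqref{P:Key}, I would expand the three Pochhammer factors using \Cref{Euler} (with $q\mapsto q^2$ and the appropriate base $t$) to obtain a triple series
\[
\frac{\left(-q^{2n+2},-q^{2n+3};q^2\right)_{\infty}}{\left(q^{2n+2};q^2\right)_{\infty}}=\sum_{r,s,t\ge 0}\frac{q^{(2n+2)(r+s)+(2n+3)t+s(s-1)+t(t-1)}}{\left(q^2;q^2\right)_r\left(q^2;q^2\right)_s\left(q^2;q^2\right)_t},
\]
whose $(0,0,0)$ term is $1$. Subtracting $1$ and multiplying by $1-q^2$, I would split the remaining sum according to which of $r$, $s$, $t$ is the first to be positive, and in each case cancel the factor $1-q^2$ against the leading factor of the corresponding $(q^2;q^2)_{\bullet}$ in the denominator, turning it into $(q^4;q^2)_{\bullet-1}$. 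Since all resulting monomials in $q$ have non-negative coefficients, this yields \eqref{P:Key}.

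The main obstacle is bookkeeping in this triple-sum splitting: one must ensure the three cases $r\ge 1$; $r=0,\,s\ge 1$; $r=s=0,\,t\ge 1$ together exhaust all nonzero lattice points exactly once, and that in each case the factor $1-q^2$ is absorbed into a denominator $(q^2;q^2)_{\bullet}$ that is actually present. Beyond this, the argument is routine and parallels \Cref{monotone1}.
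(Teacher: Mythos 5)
Your proposal is correct and follows essentially the same route as the paper: combine the two sums into $q(1+q)\sum_{n\ge0}(\cdots)q^{2n}$, split off the manifestly non-negative $n=0$ term, and prove the termwise bound by expanding via \Cref{Euler} and absorbing the factor $1-q^2$ into the leading factor of a $(q^2;q^2)_\bullet$ denominator. The only cosmetic difference is that the paper writes $(-q^{2n+2},-q^{2n+3};q^2)_\infty$ as the single product $(-q^{2n+2};q)_\infty$ and hence works with a double rather than a triple Euler expansion (and simplifies the $n=0$ term to $\frac{q}{(q^2)_\infty}$), but this does not change the argument.
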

\begin{proof}
By \eqref{thm2eqn1}, we have  %and \eqref{genfunc1}, we have 
%\begin{equation*}
%\overline{P}_{\textup{ev}}(q) =\frac{1}{(q)_\infty}\sum_{n\ge 0} \frac{\left(q^2;q^2\right)_n q^{2n+1}}{\left(-q^2,-q^3;q^2\right)_n}.
%\end{equation*}
%Using this and \eqref{genfunc1}, we get %$v_1(0)=1$, and $v_1(1)=0$, we get%Now
%\begin{align*}
%\frac{\left(q^2;q^2\right)_n}{(q)_\infty\left(-q^2,-q^3;q^2\right)_n}&=\frac{(q,-q)_n}{(q)_\infty \left(-q^2\right)_{2n}}=\frac{(-q)_n}{\left(q^{n+1}\right)_\infty\left(-q^2\right)_{2n}}=\frac{1+q}{\left(q^{n+1}\right)_\infty\left(-q^{n+1}\right)_{n+1}}\\
%&=\frac{1+q}{\left(q^{2n+2};q^2\right)_{n+1}\left(q^{2n+2}\right)_\infty}.
%\end{align*}
%So 
\begin{align}\label{genfunc2}
\left(1-q\right)\overline{P}_{\textup{ev}}(q)%q^{-1} (1-q)%&=\sum_{n\ge 0}\frac{\left(1-q^2\right)q^{2n}}{\left(q^{2n+2};q^2\right)_{n+1}\left(q^{2n+2}\right)_\infty}\overset{\eqref{genfunc1}}{=}\left(1-q^2\right)V_1(q)=\left(1-q^2\right)\sum_{n\geq0}v_1(n)q^n \nonumber \\
%&\hspace{-1.2cm}=\sum_{n\geq0}v_1(n)q^n-\sum_{n\geq2}v_1(n-2)q^n=v_1(0)+v_1(1)q+\sum_{n\ge 2}\left(v_1(n)-v_1(n-2)\right)q^n\nonumber\\
%&\hspace{-1.2cm}
=\frac{q}{\left(q^2\right)_{\infty}}+\left(1-q^2\right)\sum_{n\ge 0}\frac{\left(-q^{2n+4}\right)_{\infty}q^{2n+3}}{\left(q^{2n+4};q^2\right)_{\infty}}.%\overline{P}^{[2]}_{\textup{od}}(q).%1+q^2\sum_{n\ge 0}\left(v_1(n+2)-v_1(n)\right)q^n.
\end{align}
%using that $v_1(0)=1$ and $v_1(1)=0$ (checked by computer) in the final step. 
To prove the monotonicity of $\overline{p}_{\textnormal{ev}}(n)$, we need to show that $(1-q)\overline{P}_{\textup{ev}}(q)\succeq0$. %has non-negative Fourier coefficients. 
%By \eqref{genfunc2}, it suffices to prove that $(1-q^2)\overline{P}^{[2]}_{\textup{od}}(q)\succeq 0$.
 Since $\frac{q}{(q^2)_{\infty}}\succeq 0$, by \eqref{genfunc2}, this follows if we show that for $n\in \mathbb{N}_0$, 
\begin{equation*}%\label{monotone2eqn2}
\left(1-q^2\right)\frac{\left(-q^{2n+4}\right)_{\infty}}{\left(q^{2n+4};q^2\right)_{\infty}}-1+q^2\succeq 0.
\end{equation*}
%{\bf \color{blue} K: moved a step into details file.} %Indeed, using this, we see that 
%\[
%\left(1-q^2\right)\sum_{n\ge 0}\frac{\left(-q^{2n+4}\right)_{\infty} q^{2n+3}}{\left(q^{2n+4};q^2\right)_{\infty}}\succeq \left(1-q^2\right)\sum_{n\ge 0}q^{2n+3}%=\left(1-q^2\right)\frac{q^3}{1-q^2}
%=q^3\succeq 0.
%\]
To prove this, using \Cref{Euler}, we have 
\begin{align*}
\left(1-q^2\right)\frac{\left(-q^{2n+4}\right)_{\infty} }{\left(q^{2n+4};q^2\right)_{\infty}}-1+q^2%=\left(1-q^2\right)\left(\frac{\left(-q^{2n+4}\right)_{\infty} }{\left(q^{2n+4};q^2\right)_{\infty}}-1\right)\\
%&=\left(1-q^2\right)\left(\sum_{r,s\ge 0}\frac{q^{(2n+4)r+\frac{r(r-1)}{2}} q^{(2n+4)s}}{\left(q\right)_r\left(q^2;q^2\right)_s}-1\right)
&=\left(1-q^2\right)\left(\sum_{r,s\ge 0}\frac{q^{\frac{r(r-1)}{2}+\left(2n+4\right)(r+s)}}{\left(q\right)_r\left(q^2;q^2\right)_s}-1\right)\\
%&=\left(1-q^2\right)\left(1+\sum_{r\ge 1}\frac{q^{\frac{r(r-1)}{2}+\left(2n+4\right)r}}{\left(q\right)_r}+\sum_{\substack{r\ge 0\\s\ge 1}}\frac{q^{\frac{r(r-1)}{2}+\left(2n+4\right)(r+s)}}{\left(q\right)_r\left(q^2;q^2\right)_s}-1\right)\\
&=\left(1+q\right)\sum_{r\ge 1}\frac{q^{\frac{r(r-1)}{2}+\left(2n+4\right)r}}{\left(q^2\right)_{r-1}}+\sum_{\substack{r\ge 0\\s\ge 1}}\frac{q^{\frac{r(r-1)}{2}+\left(2n+4\right)(r+s)}}{\left(q\right)_r\left(q^4;q^2\right)_{s-1}}\succeq 0.
\end{align*}
This proves the claim.
\end{proof}

\begin{proof}[Proof of \Cref{thm6.2}] 
By \Cref{monotone2}, $\overline{p}_{\textnormal{ev}}(n)$ is non-negative and  weakly increasing. Therefore, to apply \Cref{P:CorToIngham}, it remains to verify that $\overline{P}_{\textnormal{ev}}(q)$ satisfies \eqref{E:as1}. We show that%, with $q=e^{-z}$, as $z\to 0$ in $R_{\Delta}$,
\begin{equation}\label{phi}
\overline{P}_{\textup{ev}}(q)\sim  \sqrt{\frac{z}{2\pi}} e^{\frac{\pi^2}{6z}}\ \ \text{with}\ q=e^{-z},\ \text{as}\ z\to 0\ \text{in}\ R_{\Delta}.%\ \ \text{as}\ \ q\to 1\ \text{in}\  R_{\Delta}.
\end{equation}  
Using \Cref{L:q-Pochhammer asymptotics}, we have, as $z\to 0$ in $R_{\Delta}$, 
\[
\frac{1+q}{2(q)_{\infty}}\left(1-\frac{(q)^2_{\infty}}{(-q)^2_{\infty}}\right)\sim \sqrt{\frac{z}{2\pi}}e^{\frac{\pi^2}{6z}}.%\frac{1}{(q)_{\infty}}\sim\sqrt{\frac{z}{2\pi}}e^{\frac{\pi^2}{6z}}.
\]	
Thus, by \Cref{thm2}, \eqref{phi} follows if we show that
\[
\left(1+q\right)\left(\phi(-q)-1\right)%=\sqrt{\frac{2z}{\pi}}e^{\frac{\pi^2}{24z}}\left(1+O\left(\sqrt{z}\right)\right).%
\ll e^{\frac{\pi^2}{24z}}.%\ \ \text{as}\ \ \frac{e^{\frac{\pi^2}{24z}}}{\sqrt{z}e^{\frac{\pi^2}{6z}}}=\frac{e^{-\frac{\pi^2}{8z}}}{\sqrt{z}}\os{z\to 0}\to 0.
\]
This indeed holds by \cite{BCN} and \Cref{L:q-Pochhammer asymptotics}. %First, using \Cref{thm2} and \Cref{theta squared}, we have %{\bf \color{blue} K: proof in the details file.}
%\begin{equation}\label{asympeqn1}
%\overline{P}_{\textup{ev}}(q)=-\frac{2\left(1+q\right)}{(q)_{\infty}}\sum_{n\ge 1}\frac{(-1)^n (q)_n q^{n^2}}{\left(1-q^{2n}\right)\left(-q^{n+1}\right)_n}+\left(1+q\right)\left(\phi(-q)-1\right).
%\end{equation} 
% Writing as $\frac{(q)_n}{1-q^{2n}}=\frac{(q)_{n-1}}{1+q^n}$, we see that there is no pole at $q=1$. Moreover, $(q;q)_{n-1}$ vanishes as $q\to 1$ unless $n=1$. Thus, using \Cref{L:q-Pochhammer asymptotics}, we have, as $z\to 0$ in $R_{\Delta}$,
%	\begin{equation}\label{asympeqn2}
%		-\frac{2(1+q)}{(q)_\infty}\sum_{n\ge 1}\frac{(-1)^n (q)_n q^{n^2}}{\left(1-q^{2n}\right)\left(-q^{n+1}\right)_n }%\sim \frac{-2\cdot 2}{\sqrt{\frac{2\pi}{z}}e^{-\frac{\pi^2}{6 z}}}\frac{-1}{2\cdot 2}
%		\sim \sqrt{\frac{z}{2\pi}}e^{\frac{\pi^2}{6 z}}.
%	\end{equation}
%{\bf KB: We also need to bound the behavior as $q=e^{-z}$. Stopped here. \color{blue} K: To be done.}	For the remaining piece on the right-hand side of \eqref{asympeqn1}, using \eqref{Ramanujanphi} (with $q\mapsto -q$), we see that
%	\begin{equation*}
%		\lim_{q\to1}(1+q)\left(\phi(-q)-1\right)=2 \lim_{q\to1} \sum_{n\ge 1} \frac{(-1)^n q^{n^2}}{\left(-q^2;q^2\right)_n}=2 \sum_{n\ge 1}\frac{(-1)^n}{2^n}%=\frac{2 \left(-\frac 12\right)}{1+\frac{1}{2}}
%		=-\frac{2}{3}.
%	\end{equation*}
%	Thus combining this with \eqref{asympeqn2} and then applying to \eqref{asympeqn1}, we obtain, as $q\to 1$, 
%	\begin{equation*}
%		\overline{P}_{\textup{ev}}(q)\sim \sqrt{\frac{z}{2\pi}}e^{\frac{\pi^2}{6 z}}.
%	\end{equation*}
Applying Proposition \ref{P:CorToIngham}, we conclude the proof. %, with $\lambda=\frac{1}{\sqrt{2\pi}}$, $\beta=\frac{1}{2}$, and $\gamma = \frac{\pi^2}{6}$ we obtain 
%	\begin{equation*}
%	\overline{p}_{\textup{ev}}(n)\sim \frac{\frac{1}{\sqrt{2\pi}}\left(\frac{\pi^2}{6}\right)^{\frac{1}{4}+\frac{1}{4}}e^{2\sqrt{\frac{\pi^2}{6}n}}}{2\sqrt{\pi}n^{\frac{1}{4}+\frac{3}{4}}}=\frac{e^{\pi \sqrt{\frac{2n}{3}}}}{4\sqrt{3}n}. \qedhere
%	\end{equation*}
%This finishes the proof.
\end{proof}

\section{Proof of Theorems \ref{Thm1}, \ref{newthm1}, \ref{thm4}, and \ref{thm3}}\label{z:Gen}

%We first prove \Cref{Thm1}. 

\begin{proof}[Proof of \Cref{Thm1}] Using the generating function \eqref{zgen2}, and then applying \Cref{lemident1} with $q\mapsto q^2$, $a=-\z$, $b=-q$, and $c=q$, we obtain
\begin{multline}\label{Thm1eqn1prf}
P_{\textup{od}}(\z;q)\!=\!\frac{\left(1+q\right)\left(-q^3;q^2\right)_{\infty}}{\zeta q\left(\z q^2,q^3;q^2\right)_{\infty}}\\
\times\left(\sum_{n\ge 1}\frac{\left(-\frac 1q;q^2\right)_n\z^nq^{n(n+1)}}{\left(-\z q,-q^2;q^2\right)_n}\!-\!\frac{\left(\z q^2,q^3;q^2\right)_{\infty}}{\left(-q^3;q^2\right)_{\infty}}%\o\left(\left(\z q\right)^{\frac 12}, iq^{\frac 32};q\right)%\o\left(\left(\z q\right)^{\frac 12}
%\right)%
\sum_{n\ge 1}\frac{\z^n q^{2n^2-n}}{\left(-\z q,-q^2;q^2\right)_n}
\right).
\end{multline}
%Finally, using the definition of $f$ %(resp. of $\o$) 
%from 
Plugging in \eqref{Choi1} with $q\mapsto q^2, \zeta_1= \z q$, and $\zeta_2= q^2$ %(resp. with $\zeta_1= \left(\z q\right)^{\frac 12}$ and $\zeta_2= i q^{\frac 32}$) 
%and then applying to \eqref{Thm1eqn1prf}, 
yields the theorem.%we see that the theorem is complete.%we have the second (resp. first) identity of the theorem. 
%the claimed identity follows.
\end{proof}

%In the following proposition, we relate the last function on the right-hand side of Theorem \ref{Thm1} to $\o\left(\zeta_1,\zeta_2;q\right)$ and $f\left(\zeta_1,\zeta_2;q\right)$.
%\begin{proposition}\label{prop1}
%We have
%\[
%\sum_{n\ge 1}\frac{\z^n q^{2n^2-n}}{\left(-\z q,-q^2;q^2\right)_n}=\frac{\z}{q} \o\left(\left(\z q\right)^{\frac 12}, iq^{\frac 32};q\right)=f\left(\z q,q^2;q^2\right)-1.
%\]	
%\end{proposition}
%\begin{proof}
%Using the definition of $f$ (resp. of $\o$) from above with $q\mapsto q^2, \zeta_1= \z q$, and $\zeta_2= q^2$ (resp. with $\zeta_1\mapsto \left(\z q\right)^{\frac 12}$ and $\zeta_2\mapsto i q^{\frac 32}$), we have the second (resp. first) identity. 	
%\end{proof}

%\begin{corollary}\label{Thm1lem1}
%We have
%\[
%P_{\textup{od}}(q;q)=\frac{(1+q)\left(-q;q^2\right)_{\infty}}{q^2\left(q;q^2\right)^2_{\infty}}\left(-1-q+\mu(-q)\right)-\frac{1+q}{q^2}\left(f(q^2)-1\right).
%\]
%\end{corollary}
%\begin{proof}

%We next prove \Cref{Thm1cor}.

\begin{proof}[Proof of \Cref{Thm1cor}]
Substituting $\z=q$ into the second identity of \Cref{Thm1}, using the definition of $f(\zeta_1,\zeta_2;q)$ from \eqref{Choi1} with $q\mapsto q^2$, $\zeta_1=\zeta_2=q^2$, and \eqref{Ramanujanf} with $q\mapsto q^2$, we get 
\begin{align*}%\label{Thm1lem1eqn1}
\overline{P}_{\textup{od}}(q;q)%&=\frac{\left(1+q\right)\left(-q^3;q^2\right)_{\infty}}{q^2\left( q^3;q^2\right)^2_{\infty}}\sum_{n\ge 1}\frac{\left(-\frac 1q;q^2\right)_nq^{n^2+2n}}{\left(-q^2;q^2\right)^2_n}-\frac{1+q}{q^2}\left(\sum_{n\ge 0}\frac{ q^{2n^2}}{\left(-q^2;q^2\right)^2_n}-1\right)\nonumber\\
%&=\frac{\left(1+\frac 1q\right)\left(-q;q^2\right)_{\infty}}{q^2\left( q^3;q^2\right)^2_{\infty}}\left(\sum_{n\ge 1}\frac{\left(-q;q^2\right)_{n-1}q^{n^2+2n}}{\left(-q^2;q^2\right)^2_n}-\frac{\left(q^3;q^2\right)^2_{\infty}}{\left(1+\frac 1q\right)\left(-q^3;q^2\right)_{\infty}}\sum_{n\ge 1}\frac{ q^{2n^2}}{\left(-q^2;q^2\right)^2_n}\right)\nonumber\\
%&=\frac{\left(1+q\right)\left(-q;q^2\right)_{\infty}}{q^3\left( q^3;q^2\right)^2_{\infty}}\left(\sum_{n\ge 1}\frac{\left(-q;q^2\right)_{n-1}q^{n^2+2n}}{\left(-q^2;q^2\right)^2_n}-\frac{q\left(q^3;q^2\right)^2_{\infty}}{\left(-q;q^2\right)_{\infty}}\sum_{n\ge 1}\frac{ q^{2n^2}}{\left(-q^2;q^2\right)^2_n}\right)\nonumber\\
%&=\frac{\left(1+q\right)\left(-q;q^2\right)_{\infty}}{q^3\left( q^3;q^2\right)^2_{\infty}}\sum_{n\ge 1}\frac{\left(-q;q^2\right)_{n-1}q^{n^2+2n}}{\left(-q^2;q^2\right)^2_n}-\frac{1+q}{q^2}\sum_{n\ge 1}\frac{ q^{2n^2}}{\left(-q^2;q^2\right)^2_n}\nonumber\\
%&=\frac{\left(1+q\right)\left(-q;q^2\right)_{\infty}}{q^3\left( q^3;q^2\right)^2_{\infty}}\sum_{n\ge 1}\frac{\left(-q;q^2\right)_{n-1}q^{n^2+2n}}{\left(-q^2;q^2\right)^2_n}-\frac{1+q}{q^2}\left(\sum_{n\ge 0}\frac{ q^{2n^2}}{\left(-q^2;q^2\right)^2_n}-1\right)\nonumber\\
%&\hspace{-0.3 cm}\os{\substack{\text{\eqref{Ramanujanf}}\\q\mapsto q^2}}{=}
&=\frac{\left(1+q\right)\left(-q;q^2\right)_{\infty}}{q^3\left( q^3;q^2\right)^2_{\infty}}\sum_{n\ge 1}\frac{\left(-q;q^2\right)_{n-1}q^{n^2+2n}}{\left(-q^2;q^2\right)^2_n}-\frac{1+q}{q^2}\left(f\left(q^2\right)-1\right).
\end{align*}
To complete the proof, we need to show that 
\begin{equation}\label{Thm1lem1eqn2}
\sum_{n\ge 1}\frac{\left(-q;q^2\right)_{n-1}q^{n^2+2n}}{\left(-q^2;q^2\right)^2_n}=\frac{q}{\left(1-q\right)^2}\left(-1-q+\mu(-q)\right).
\end{equation}
Applying \Cref{BEMeqn1} with $q\mapsto q^2, \z=c=-1$, and $d=q$, we have 
\[
\sum_{n\ge 1}\frac{\left(-\frac 1q;q^2\right)_n q^{n^2+2n}}{\left(-q^2;q^2\right)^2_n}%=-\left(1+q\right)\sum_{n\ge 1}\frac{\left(q^3;q^2\right)_{n-1}(-1)^n q^{2n}}{\left(-q^2;q^2\right)_n}
=-\frac{1+q}{1-q}\sum_{n\ge 1}\frac{\left(q;q^2\right)_{n}(-1)^n q^{2n}}{\left(-q^2;q^2\right)_n}.
\]
Therefore, we obtain
\begin{align*}
\sum_{n\ge 1}\frac{\left(-q;q^2\right)_{n-1}q^{n^2+2n}}{\left(-q^2;q^2\right)^2_n}%&=\frac{1}{1+\frac 1q}\sum_{n\ge 1}\frac{\left(-\frac 1q;q^2\right)_n q^{n^2+2n}}{\left(-q^2;q^2\right)^2_n}=-\frac{q}{1-q}\sum_{n\ge 1}\frac{\left(q;q^2\right)_{n}(-1)^n q^{2n}}{\left(-q^2;q^2\right)_n}\\
%&=\frac{1}{1-\frac 1q}\sum_{n\ge 1}\frac{\left(q;q^2\right)_{n}(-1)^n q^{2n}}{\left(-q^2;q^2\right)_n}=\frac{1}{\left(1-\frac 1q\right)^2}\sum_{n\ge 1}\frac{\left(\frac 1q;q^2\right)_{n+1}(-1)^n q^{2n}}{\left(-q^2;q^2\right)_n}\\
&=\frac{q}{\left(1-q\right)^2}\sum_{n\ge 1}\frac{\left(\frac 1q;q^2\right)_{n+1}\left(-1\right)^n q^{2n+1}}{\left(-q^2;q^2\right)_n}.
\end{align*}
Thus, following \eqref{McIntosh}, to prove \eqref{Thm1lem1eqn2}, it remains to show that
\begin{equation}\label{eqn1}
\sum_{n\ge 1}\frac{\left(\frac 1q;q^2\right)_{n+1}\left(-1\right)^n q^{2n+1}}{\left(-q^2;q^2\right)_n}=\sum_{n\ge 1}\frac{ \left(-q;q^2\right)_nq^{n^2}}{\left(-q^2;q^2\right)^2_n}-q.
\end{equation}
Using \Cref{BEMeqn1} with $q\mapsto q^2, \z=c=-1$, and $d=\frac 1q$, we have
\begin{equation*}%\label{eqn1a}
\sum_{n\ge 1}\frac{\left(-q;q^2\right)_n q^{n^2}}{\left(-q^2;q^2\right)^2_n}%=%-\frac{1+q}{q}\sum _{n\ge 1} \frac{\left(q;q^2\right)_{n-1}(-1)^n q^{2n}}{\left(-q^2;q^2\right)_n}
=\frac{1+q}{1-q}\sum _{n\ge 1} \frac{\left(\frac 1q;q^2\right)_{n}(-1)^n q^{2n}}{\left(-q^2;q^2\right)_n}.
\end{equation*}
Thus, to prove \eqref{eqn1}, we need to show that
%\[
%\sum_{n\ge 1}\frac{\left(\frac 1q;q^2\right)_{n+1}\left(-1\right)^n q^{2n+1}}{\left(-q^2;q^2\right)_n}=\frac{1+q}{1-q}\sum _{n\ge 1} \frac{\left(\frac 1q;q^2\right)_{n}(-1)^n q^{2n}}{\left(-q^2;q^2\right)_n}-q,
%\]
%which is equivalent to proving 
\begin{equation}\label{eqn1b}
\frac{1+q}{1-q}\sum _{n\ge 1} \frac{\left(\frac 1q;q^2\right)_{n}(-1)^n q^{2n}}{\left(-q^2;q^2\right)_n}-\sum_{n\ge 1}\frac{\left(\frac 1q;q^2\right)_{n+1}\left(-1\right)^n q^{2n+1}}{\left(-q^2;q^2\right)_n}=q.
\end{equation}

We prove this by the telescoping method. For $n\in \mathbb{N}$, define
\begin{equation}\label{newdef1}
\a_n:=\a_n(q):=\frac{\left(\frac 1q;q^2\right)_n(-1)^n q^{2n}}{\left(1-q\right)\left(-q^2;q^2\right)_{n-1}}.
\end{equation}
Note that
\begin{align*}
\a_n-\a_{n+1}%=\frac{1+q^{2n}}{1-q}\frac{\left(\frac 1q;q^2\right)_n(-1)^n q^{2n}}{\left(-q^2;q^2\right)_n}-\frac{1+q^{2n+2}}{1-q}\frac{\left(\frac 1q;q^2\right)_{n+1}\left(-q^2\right)^{n+1}}{\left(-q^2;q^2\right)_{n+1}}\\
%&=\frac{1+q^{2n}}{1-q}\frac{\left(\frac 1q;q^2\right)_n(-1)^n q^{2n}}{\left(-q^2;q^2\right)_n}+\frac{1}{1-q}\frac{\left(\frac 1q;q^2\right)_{n+1}\left(-1\right)^{n}q^{2n+2}}{\left(-q^2;q^2\right)_{n}}\\
%&=\frac{\left(\frac 1q;q^2\right)_n(-1)^n q^{2n}}{\left(1-q\right)\left(-q^2;q^2\right)_n}\left(1+q^{2n}+q^2\left(1-q^{2n-1}\right)\right)=\frac{\left(\frac 1q;q^2\right)_n(-1)^n q^{2n}}{\left(1-q\right)\left(-q^2;q^2\right)_n}\left(1+q^{2}+q^{2n}\left(1-q\right)\right)\\
%&=\frac{\left(\frac 1q;q^2\right)_n(-1)^n q^{2n}}{\left(-q^2;q^2\right)_n}\left(\frac{1+q^{2}}{1-q}+q^{2n}\right)=\frac{\left(\frac 1q;q^2\right)_n(-1)^n q^{2n}}{\left(-q^2;q^2\right)_n}\left(\frac{1+q-q\left(1-q\right)}{1-q}+q^{2n}\right)\\
%&=\frac{\left(\frac 1q;q^2\right)_n(-1)^n q^{2n}}{\left(-q^2;q^2\right)_n}\left(\frac{1+q}{1-q}-q+q^{2n}\right)=\frac{\left(\frac 1q;q^2\right)_n(-1)^n q^{2n}}{\left(-q^2;q^2\right)_n}\left(\frac{1+q}{1-q}-q\left(1-q^{2n-1}\right)\right)\\
%&=\frac{1+q}{1-q}\frac{\left(\frac 1q;q^2\right)_n(-1)^n q^{2n}}{\left(-q^2;q^2\right)_n}-q\frac{\left(\frac 1q;q^2\right)_n(-1)^n q^{2n}}{\left(-q^2;q^2\right)_n}\left(1-q^{2n-1}\right)\\
&=\frac{\left(1+q\right)\left(\frac 1q;q^2\right)_n(-1)^n q^{2n}}{\left(1-q\right)\left(-q^2;q^2\right)_n}-\frac{\left(\frac 1q;q^2\right)_{n+1}\left(-1\right)^n q^{2n+1}}{\left(-q^2;q^2\right)_n}.
\end{align*}
Consequently, we have
\begin{equation*}%\label{eqn1c}
\!\!\!\frac{1\!+\!q}{1\!-\!q}\sum _{n=1}^{N} \frac{\left(\frac 1q;q^2\right)_{\!n}(-1)^n q^{2n}}{\left(-q^2;q^2\right)_n}\!-\!\sum_{n=1}^{N}\frac{\left(\frac 1q;q^2\right)_{\!n+1}\left(-1\right)^n q^{2n+1}}{\left(-q^2;q^2\right)_n}=\sum_{n=1}^{N}(\a_n\!-\!\a_{n+1})=\a_1\!-\!\a_{N+1}.\!
\end{equation*}
From \eqref{newdef1}, we see that
\[
\a_n=\frac{\left(\frac 1q;q^2\right)_n(-1)^n q^{2n}}{\left(1-q\right)\left(-q^2;q^2\right)_{n-1}}=\frac{(-1)^{n+1}\left(q;q^2\right)_{n-1}q^{2n-1}}{\left(-q^2;q^2\right)_{n-1}}.
\]
Therefore, it follows that $\a_1=q$ and $\a_N\to 0$ as $N\to \infty$. This %Consequently applying these and letting $N\to \infty$ in \eqref{eqn1c}, we 
concludes the proof of \eqref{eqn1b}. %This finishes the proof of \eqref{Thm1lem1eqn2}.
\end{proof}

To prove \Cref{newthm1}, we use the following lemma.
\begin{lemma}\label{claim}
We have
\begin{equation*}
\sum_{n\ge1} \frac{q^{(2n-1)(n-1)}}{\left(1+q^{2n}\right)\left(-q\right)_{2n-2}}= 1-q \sum_{n\ge1} \frac{q^{2n^2-n}}{\left(-q\right)_{2n}}.
\end{equation*}
\end{lemma}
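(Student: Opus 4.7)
The plan is to prove the claim by a direct telescoping argument. First, I would rewrite
\[
\frac{1}{(1+q^{2n})(-q)_{2n-2}} = \frac{1+q^{2n-1}}{(-q)_{2n}},
\]
so the left-hand side becomes $\sum_{n\ge 1}\frac{q^{(2n-1)(n-1)}(1+q^{2n-1})}{(-q)_{2n}}$. Since $(2n-1)(n-1)+(2n-1)=2n^2-n$, the claim is equivalent to
\[
\sum_{n\ge 1}\frac{q^{(2n-1)(n-1)} + q^{2n^2-n} + q^{2n^2-n+1}}{(-q)_{2n}} = 1.
\]

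The key step is to introduce the auxiliary sequence
\[
V_n := \frac{q^{(2n-1)(n-1)}}{(-q)_{2n-2}},
\]
and to compute $V_n - V_{n+1}$ over the common denominator $(-q)_{2n}$. Its numerator is
\[
q^{(2n-1)(n-1)}(1+q^{2n-1})(1+q^{2n}) - q^{(2n+1)n}.
\]
Expanding $(1+q^{2n-1})(1+q^{2n}) = 1 + q^{2n-1} + q^{2n} + q^{4n-1}$ and using $(2n-1)(n-1)+(4n-1) = 2n^2+n = (2n+1)n$, the ``diagonal'' term cancels exactly with $q^{(2n+1)n}$, and the remaining three exponents $(2n-1)(n-1)$, $2n^2-n$, $2n^2-n+1$ are precisely those appearing in the target sum. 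Thus
\[
V_n - V_{n+1} = \frac{q^{(2n-1)(n-1)} + q^{2n^2-n} + q^{2n^2-n+1}}{(-q)_{2n}}.
\]

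Finally, I would telescope: since $V_1 = q^0/(-q)_0 = 1$ and $V_n \to 0$ as $n\to\infty$ (the exponent $(2n-1)(n-1)$ grows quadratically while $(-q)_{2n-2}$ converges), we obtain
\[
\sum_{n\ge 1}\bigl(V_n - V_{n+1}\bigr) = V_1 = 1,
\]
which is exactly the rewritten form of the claim. The only non-routine step is guessing the correct telescoper $V_n$; the decisive hint is that the target summand already carries the weight $q^{(2n-1)(n-1)}$ and the Pochhammer $(-q)_{2n-2}$ appearing on the left-hand side, so the natural trial $V_n = q^{(2n-1)(n-1)}/(-q)_{2n-2}$ succeeds upon noticing the exact exponent match $(2n-1)(n-1)+(4n-1) = (2n+1)n$.
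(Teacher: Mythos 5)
Your proof is correct, and it takes a genuinely different route from the paper's. The paper first reindexes the left-hand side into the base-$q^2$ form $\frac{1}{1+q^2}\sum_{n\ge0} \frac{q^{2n^2+n}}{(-q,-q^4;q^2)_{n}}$ and then evaluates that sum by combining two external identities, namely Lemma \ref{Lostntbk} (an identity from Ramanujan's lost notebook) and the Rogers--Fine identity (Lemma \ref{RF}), before shifting the index back. You instead observe that, after absorbing the factor $\frac{1}{1+q^{2n}}(-q)_{2n-2}^{-1}=\frac{1+q^{2n-1}}{(-q)_{2n}}$, the claim reduces to showing $\sum_{n\ge 1}\frac{q^{(2n-1)(n-1)}+q^{2n^2-n}+q^{2n^2-n+1}}{(-q)_{2n}}=1$, and you verify this by exhibiting the explicit telescoper $V_n=\frac{q^{(2n-1)(n-1)}}{(-q)_{2n-2}}$; the exponent computation $(2n-1)(n-1)+(4n-1)=(2n+1)n$ and the boundary values $V_1=1$, $V_n\to0$ all check out. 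Your argument is entirely self-contained and elementary, trading the paper's reliance on two nontrivial $q$-series identities for a single well-chosen ansatz; it is also in the same spirit as the telescoping argument the authors themselves use later in the proof of Corollary \ref{Thm1cor}, so it fits naturally with the paper's toolkit.
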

\begin{proof}
Making the change of variable $n\mapsto n+1$, we have %write the left-hand side as
\begin{align}\label{claimeqn1}
%\sum_{n\ge1} \frac{q^{(2n-1)(n-1)}}{\left(-q;q\right)_{2n-2}\left(1+q^{2n}\right)}&=\sum_{n\ge0} \frac{q^{(2n+1)n}}{\left(-q;q\right)_{2n}\left(1+q^{2n+2}\right)}=\sum_{n\ge0} \frac{q^{2n^2+n}}{\left(-q;q^2\right)_{n}\left(-q^2;q^2\right)_{n+1}}\nonumber\\
%&\hspace{.37cm}
\sum_{n\ge1} \frac{q^{(2n-1)(n-1)}}{\left(1+q^{2n}\right)\left(-q\right)_{2n-2}}=\frac{1}{1+q^2}\sum_{n\ge0} \frac{q^{2n^2+n}}{\left(-q,-q^4;q^2\right)_{n}}.
\end{align}
%using $n\mapsto n+1$ in the first step. 
Now, applying \Cref{Lostntbk} with $q\mapsto q^2, a= -q^2, b= -q^{-1}$ and using \Cref{RF} with $q\mapsto q^2,\alpha=0$, $\beta=-q^4$, and $t=-q$, we have %(note that the condition $|bq^2|=|q|<1$ is satisfied), we have
\begin{align*}%\label{claimeqn2}
\sum_{n\ge 0}\frac{q^{2n^2+n}}{\left(-q,-q^4;q^2\right)_n}%&=1-q^2\sum_{n\ge 1}\frac{ (-1)^n q^{n}}{\left(-q^4;q^2\right)_n}=1-q^2\left(\sum_{n\ge 0}\frac{ (-1)^n q^{n}}{\left(-q^4;q^2\right)_n}-1\right)\nonumber\\
&%=1+q^2-q^2\sum_{n\ge 0}\frac{ (-1)^n q^{n}}{\left(-q^4;q^2\right)_n}
=1+q^2-q^2\sum_{n\ge0}\frac{q^{2n^2+3n}}{\left(-q^4;q^2\right)_n\left(-q;q^2\right)_{n+1}}.
\end{align*}
%using \Cref{RF} with $q\mapsto q^2,\alpha=0$, $\beta=-q^4$, and $t=-q$.
%\begin{equation*}
%\sum_{n\ge 0}\frac{(-1)^n q^{n}}{\left(-q^4;q^2\right)_n}=\sum_{n\ge0}\frac{q^{2n^2+3n}}{\left(-q^4;q^2\right)_n\left(-q;q^2\right)_{n+1}}.
%\end{equation*}
%and applying the resulting identity to \eqref{claimeqn2}, 
%it follows that
%\begin{equation*}%\label{claimeqn3}
%\sum_{n\ge 0}\frac{q^{2n^2+n}}{\left(-q,-q^4;q^2\right)_n}=1+q^2-q^2\sum_{n\ge0}\frac{q^{2n^2+3n}}{\left(-q^4;q^2\right)_n\left(-q;q^2\right)_{n+1}}.
%\end{equation*}
Applying this to \eqref{claimeqn1} and then letting $n\mapsto n-1$, the lemma follows. %{\bf \color{blue} K: Moved the steps in the details file.} we have
%\begin{multline*}
%\sum_{n\ge1} \frac{q^{(2n-1)(n-1)}}{\left(-q;q\right)_{2n-2}\left(1+q^{2n}\right)}=\frac{1}{1+q^2}\left(1+q^2-q^2\sum_{n\ge0}\frac{q^{2n^2+3n}}{\left(-q^4;q^2\right)_n\left(-q;q^2\right)_{n+1}}\right)\\
%=1-q^2\sum_{n\ge0}\frac{q^{2n^2+3n}}{\left(-q^2,-q;q^2\right)_{n+1}}=1-q^2\sum_{n\ge1}\frac{q^{2\left(n-1\right)^2+3\left(n-1\right)}}{\left(-q^2,-q;q^2\right)_{n}}
%=1-q \sum_{n\ge1} \frac{q^{2n^2-n}}{\left(-q;q\right)_{2n}},
%\end{multline*}
%using $n\mapsto n-1$ in the third step. This finishes the claim.	
\end{proof}

%{\kbf To here.}
%
%We are ready prove to

%Now we are ready to prove \Cref{newthm1}.

\begin{proof}[Proof of Theorem \ref{newthm1}]
From \eqref{zgen2} and \eqref{Thm1eqn1prf}, we have 
\begin{align*}
&\sum_{n\ge 0}\frac{\left(-q^{2n+3};q^2\right)_{\infty}q^{2n+1}}{\left(\zeta q^{2n+2},q^{2n+3};q^2\right)_{\infty}}+\sum_{n\ge 1}\frac{\left(-q^{2n+1};q^2\right)_{\infty}q^{2n}}{\left(\zeta q^{2n},q^{2n+1};q^2\right)_{\infty}}\nonumber\\
&\hspace{2 cm}=\frac{\left(-q;q^2\right)_{\infty}}{\z q\left(\zeta q^2,q^3;q^2\right)_{\infty}}\Biggl(\sum_{n\ge 1}\frac{\left(-\frac 1q;q^2\right)_n\zeta^nq^{n(n+1)}}{\left(-\zeta q,-q^2;q^2\right)_n}
-\frac{\left(\zeta q^2,q^3;q^2\right)_{\infty}}{\left(-q^3;q^2\right)_{\infty}}\sum_{n\ge 1}\frac{\zeta^n q^{2n^2-n}}{\left(-\zeta q,-q^2;q^2\right)_n}\Biggr).	
\end{align*}
It is not hard to see that this is equivalent to
\begin{align*}
&\sum_{n\ge 1}\frac{\left(-q^{2n+3};q^2\right)_{\infty}q^{2n+1}}{\left(\zeta q^{2n+2},q^{2n+3};q^2\right)_{\infty}}+\sum_{n\ge 1}\frac{\left(-q^{2n+3};q^2\right)_{\infty}q^{2n+2}}{\left(\zeta q^{2n+2},q^{2n+3};q^2\right)_{\infty}}+\frac{1+q}{\z q}\sum_{n\ge 1}\frac{\zeta^n q^{2n^2-n}}{\left(-\zeta q,-q^2;q^2\right)_n}\\
&\hspace{2.5 cm}=\frac{\left(-q;q^2\right)_\infty}{\z q\left(\zeta q^2,q^3;q^2\right)_\infty} \sum_{n\ge1} \frac{\left(-\frac{1}{q};q^2\right)_n \zeta^n q^{n(n+1)}}{\left(-\zeta q,-q^2;q^2\right)_n} - \frac{\left(-q^3;q^2\right)_\infty q}{\left(\zeta q^2,q^3;q^2\right)_\infty} - \frac{\left(-q^3;q^2\right)_\infty q^2}{\left(\zeta q^2,q^3;q^2\right)_\infty}.
\end{align*}
Taking the limit  $\zeta\to q^{-2}$, we obtain% of the resulting equation, then do the obvious cancellations, and then let $\zeta\to q^{-2}$ to get
\begin{align}\label{special case Theorem 7.1}
&\sum_{n\ge1} \frac{\left(-q^{2n+3};q^2\right)_\infty q^{2n+1}}{\left(q^{2n},q^{2n+3};q^2\right)_\infty} + \sum_{n\ge1} \frac{\left(-q^{2n+3},q^2\right)_\infty q^{2n+2}}{\left(q^{2n},q^{2n+3};q^2\right)_\infty} + q(1+q)\sum_{n\ge1} \frac{q^{2n^2-3n}}{\left(-\frac{1}{q},-q^2;q^2\right)_n} \\
%&=\!\lim_{\zeta\to q^{-2}}\! \left(\frac{\zeta^{-1}q^{-1}(1+q)\left(-q^3;q^2\right)_\infty}{\left(\zeta q^2,q^3;q^2\right)_\infty} \sum_{n\ge1} \frac{\left(-\frac{1}{q};q^2\right)_n \zeta^n q^{n(n+1)}}{\left(-\zeta q,-q^2;q^2\right)_n} - \frac{\left(-q^3;q^2\right)_\infty q}{\left(\zeta q^2,q^3;q^2\right)_\infty} - \frac{\left(-q^3;q^2\right)_\infty q^2}{\left(\zeta q^2,q^3;q^2\right)_\infty}\right)\nonumber\\
&\hspace{2.5 cm}=\lim_{\zeta\to q^{-2}} \frac{\dfrac{\left(-q;q^2\right)_\infty}{\z q\left(\zeta q^4,q^3;q^2\right)_\infty}\displaystyle\sum_{n\ge1} \dfrac{\left(-\frac{1}{q};q^2\right)_n \zeta^n q^{n(n+1)}}{\left(-\zeta q, -q^2;q^2\right)_n} - \dfrac{\left(-q;q^2\right)_\infty q}{\left(\zeta q^4,q^3;q^2\right)_\infty} %- \frac{\left(-q^3;q^2\right)_\infty q^2}{\left(\zeta q^4,q^3;q^2\right)_\infty}
}{1-\zeta q^2} =: L(q).\nonumber
\end{align}
We define 
\begin{align*}
N_r(\zeta;q)&:=\frac{\left(-q;q^2\right)_\infty}{\z q\left(\zeta q^4,q^3;q^2\right)_\infty} \sum_{n\ge1} \frac{\left(-\frac{1}{q};q^2\right)_n \zeta^n q^{n(n+1)}}{\left(-\zeta q, -q^2;q^2\right)_n} - \frac{\left(-q;q^2\right)_\infty q}{\left(\zeta q^4,q^3;q^2\right)_\infty},\ %- \frac{\left(-q^3;q^2\right)_\infty q^2}{\left(\zeta q^4,q^3;q^2\right)_\infty},
 N_r(q):= N_r\!\left(q^{-2};q\right).
\end{align*}
Using \Cref{kang}, we see that $N_r(q)=0$.
%Letting $n\mapsto n-1$ and then $q\mapsto q^2$ in \Cref{kang}, we have
%\begin{equation}\label{kangderv}
%\sum_{n\ge 1}\frac{q^{n(n-1)}}{\left(-q^2;q^2\right)_{n}}=1.
%\end{equation} 
%Then
%\begin{align*}
%N_r(q) %&= \frac{q(1+q)\left(-q^3;q^2\right)_\infty}{\left(q^2,q^3;q^2\right)_\infty} \sum_{n\ge1} \frac{\left(-\frac{1}{q};q^2\right)_n q^{n(n-1)}}{\left(-\frac{1}{q},-q^2;q^2\right)_n} - \frac{q\left(-q^3;q^2\right)_\infty}{\left(q^2,q^3;q^2\right)_\infty} - \frac{q^2\left(-q^3;q^2\right)_\infty}{\left(q^2,q^3;q^2\right)_\infty}\\
%&=\frac{q(1+q)\left(-q^3;q^2\right)_\infty}{\left(q^2;q\right)_\infty} \sum_{n\ge1} \frac{q^{n(n-1)}}{\left(-q^2;q^2\right)_n} - \frac{q\left(-q^3;q^2\right)_\infty}{\left(q^2;q\right)_\infty} - \frac{q^2\left(-q^3;q^2\right)_\infty}{\left(q^2;q\right)_\infty}\\
%&=\frac{q(1+q)\left(-q^3;q^2\right)_\infty}{\left(q^2;q\right)_\infty} - \frac{\left(-q;q^2\right)_\infty q}{(q^2;q)_\infty}=0. %- \frac{\left(-q^3;q^2\right)_\infty q^2}{(q^2;q)_\infty}=0.
%\end{align*}	
%using \eqref{kangderv} in the penultimate step. 
Therefore, applying L'Hospital's rule, it follows that %{\bf KB: To check from here.} 
\begin{align*}
\left[\frac{\partial}{\partial\zeta} N_r(\zeta;q)\right]_{\zeta=q^{-2}} %= \frac{q^{-1}(1+q)\left(-q^3;q^2\right)_\infty}{(q^3,q^2;q^2)_\infty} \sum_{n\ge1} \frac{\left(-\frac{1}{q};q^2\right)_n q^{n^2-n+2}}{(-q^2;q^2)_n(-q^{-1};q^2)_n} \\
%&\hspace{1.5cm}\times\left((n-1)q^2 + \sum_{j\ge2} \frac{q^{2j}}{1-q^{2j-2}} - \sum_{j=0}^{n-1} \frac{q^{2j+1}}{1+q^{2j-1}}\right) - \frac{q(1+q)\left(-q^3;q^2\right)_\infty}{(q^3;q^2)_\infty (q^2;q^2)_\infty} \sum_{j\ge2} \frac{q^{2j}}{1-q^{2j-2}}\\
%&=\frac{q(1+q)\left(-q^2;q^2\right)_\infty}{(q^3,q^2;q^2)_\infty} \sum_{n\ge1} \frac{q^{n(n-1)}}{(-q^2;q^2)_n} \\
%&\hspace{1.8cm}\times\left((n\!-\!1)q^2 + \sum_{j\ge2} \frac{q^{2j}}{1\!-\!q^{2j-2}} - \sum_{j=0}^{n-1} \frac{q^{2j+1}}{1\!+\!q^{2j-1}}\right)- \frac{q(1+q)\left(-q^3;q^2\right)_\infty}{(q^3,q^2;q^2)_\infty} \sum_{j\ge2} \frac{q^{2j}}{1-q^{2j-2}}\\
%&=\frac{q^3(1+q)\left(-q^3;q^2\right)_\infty}{(q^3,q^2;q^2)_\infty} \sum_{n\ge1} \frac{(n-1)q^{n(n-1)}}{(-q^2;q^2)_\infty} + \frac{q(1+q)\left(-q^3;q^2\right)_\infty}{(q^3,q^2;q^2)_\infty}\sum_{j\ge2} \frac{q^{2j}}{1-q^{2j-2}}\\
%&\hspace{1.5cm}-\frac{q(1+q)\left(-q^3;q^2\right)_\infty}{(q^3,q^2;q^2)_\infty} \sum_{n\ge1} \frac{q^{n(n-1)}}{(-q^2;q^2)_n} - \frac{q(1+q)\left(-q^3;q^2\right)_\infty}{(q^3;q^2)_\infty} \sum_{j\ge2} \frac{q^{2j}}{1-q^{2j-2}}\\
%&=\frac{q^3(1+q)\left(-q^3;q^2\right)_\infty}{(q^2;q)_\infty} \left(\sum_{n\ge1} \frac{(n-1)q^{n(n-1)}}{(-q^2;q^2)_n} (n-1) - \sum_{j=0}^{n-1} \frac{q^{2j+1}}{1+q^{2j-1}}\right)\\
\!\!\!\!\!\!\!\!\!\!\!=\frac{\left(-q;q^2\right)_\infty q}{(q^2)_\infty}\!\left(\!-q^2\!+\!q^2\sum_{n\ge1}\!\frac{nq^{n(n-1)}}{\left(-q^2;q^2\right)_n}\!-\!\sum_{n\ge1}\! \frac{q^{n(n-1)}}{\left(-q^2;q^2\right)_n}\sum_{k=1}^n\!\frac{q^{2k-1}}{1+q^{2k-3}}\right).
\end{align*}
%Thus, we have
%\begin{align*}%\label{L evaluation}
%\hspace{-0.2 cm}L(q)\!=\!-\frac{\left(-q;q^2\right)_\infty}{q(q^2;q)_\infty}\left(\!-q^2\!+\!q^2\sum_{n\ge1} \frac{nq^{n(n-1)}}{\left(-q^2;q^2\right)_n}\!-\!\sum_{n\ge1}\frac{q^{n(n-1)}}{\left(-q^2;q^2\right)_n} \sum_{k=1}^n \frac{q^{2k-1}}{1+q^{2k-3}} \right).
%\end{align*}
Plugging this into \eqref{special case Theorem 7.1}, we obtain 
\begin{align*}%\label{eqndet1}
&\sum_{n\ge1}\frac{\left(-q^{2n+3};q^2\right)_\infty q^{2n+1}}{\left(q^{2n},q^{2n+3};q^2\right)_\infty}+\sum_{n\ge1} \frac{\left(-q^{2n+3};q^2\right)_\infty q^{2n+2}}{\left(q^{2n},q^{2n+3};q^2\right)_\infty}+q(1+q)\sum_{n\ge1} \frac{q^{2n^2-3n}}{\left(-\frac{1}{q},-q^2;q^2\right)_n}\nonumber\\
&\hspace{2.5 cm}=-\frac{\left(-q;q^2\right)_\infty}{q(q^2)_\infty}\left(-q^2+q^2\sum_{n\ge1} \frac{nq^{n(n-1)}}{\left(-q^2;q^2\right)_n} - \sum_{n\ge1} \frac{q^{n(n-1)}}{\left(-q^2;q^2\right)_n} \sum_{k=1}^n \frac{q^{2k-1}}{1+q^{2k-3}} \right).
\end{align*}
Simplifying the left-hand side and then dividing both sides by $q(1+q)$, we get%Note that the left-hand side simplifies as
%\begin{align*}
%&\sum_{n\ge1}\frac{\left(-q^{2n+3};q^2\right)_\infty q^{2n+1}}{\left(q^{2n},q^{2n+3};q^2\right)_\infty}+\sum_{n\ge1} \frac{\left(-q^{2n+3};q^2\right)_\infty q^{2n+2}}{\left(q^{2n},q^{2n+3};q^2\right)_\infty}+q(1+q)\sum_{n\ge1} \frac{q^{2n^2-3n}}{\left(-\frac{1}{q},-q^2;q^2\right)_n}\\
%&\hspace{4 cm}=\left(q+q^2\right)\sum_{n\ge1}\frac{\left(-q^{2n+3};q^2\right)_\infty q^{2n}}{\left(q^{2n},q^{2n+3};q^2\right)_\infty}+q(1+q)\sum_{n\ge1} \frac{q^{2n^2-3n}}{\left(-\frac{1}{q},-q^2;q^2\right)_n}\\
%&\hspace{4 cm}=q(1+q)\left(\sum_{n\ge1}\frac{\left(-q^{2n+3};q^2\right)_\infty q^{2n}}{\left(q^{2n},q^{2n+3};q^2\right)_\infty}+\sum_{n\ge1} \frac{q^{2n^2-3n}}{\left(-\frac{1}{q},-q^2;q^2\right)_n}\right).
%\end{align*}
%Applying this to \eqref{eqndet1}, we have
%\begin{align*}
%&q(1+q)\left(\sum_{n\ge1}\frac{\left(-q^{2n+3};q^2\right)_\infty q^{2n}}{\left(q^{2n},q^{2n+3};q^2\right)_\infty}+\sum_{n\ge1} \frac{q^{2n^2-3n}}{\left(-\frac{1}{q},-q^2;q^2\right)_n}\right)\\
%&=-\frac{(1+q)\left(-q^3;q^2\right)_\infty}{q(q^2;q)_\infty}\left(-q^2+q^2\sum_{n\ge1} \frac{nq^{n(n-1)}}{\left(-q^2;q^2\right)_n} - \sum_{n\ge1} \frac{q^{n(n-1)}}{\left(-q^2;q^2\right)_n} \sum_{k=1}^n \frac{q^{2k-1}}{1+q^{2k-3}} \right).
%\end{align*}
%Now dividing both sides by $q(1+q)$, we get
\begin{align*}
&\sum_{n\ge1}\frac{\left(-q^{2n+3};q^2\right)_\infty q^{2n}}{\left(q^{2n},q^{2n+3};q^2\right)_\infty}+\sum_{n\ge1} \frac{q^{2n^2-3n}}{\left(-\frac{1}{q},-q^2;q^2\right)_n}\\
%&\hspace{2.5 cm}=-\frac{\left(-q^3;q^2\right)_\infty}{q^2(q^2;q)_\infty}\left(-q^2+q^2\sum_{n\ge1} \frac{nq^{n(n-1)}}{\left(-q^2;q^2\right)_n} - \sum_{n\ge1} \frac{q^{n(n-1)}}{\left(-q^2;q^2\right)_n} \sum_{k=1}^n \frac{q^{2k-1}}{1+q^{2k-3}} \right)\\
&\hspace{2.5 cm}=\frac{\left(-q^3;q^2\right)_\infty}{(q^2)_\infty}\left(1-\sum_{n\ge1} \frac{nq^{n(n-1)}}{\left(-q^2;q^2\right)_n}+\sum_{n\ge1} \frac{q^{n(n-1)}}{\left(-q^2;q^2\right)_n} \sum_{k=1}^n \frac{q^{2k-3}}{1+q^{2k-3}} \right).
\end{align*}
Letting $k\mapsto k+1$ and then using \Cref{Donato} with $q\mapsto q^2$, we obtain %{\bf \color{blue} K: You cross out the Left-hand side of the following identity but then how does it make sense? I kept it for the moment.}
%\begin{align}\label{Maineqn}
%\sum_{n\ge1}\frac{\left(-q^{2n+3};q^2\right)_\infty q^{2n}}{\left(q^{2n},q^{2n+3};q^2\right)_\infty}&=\frac{\left(-q^3;q^2\right)_\infty}{(q^2;q)_\infty}\left(1-\sum_{n\ge1} \frac{nq^{n(n-1)}}{\left(-q^2;q^2\right)_n}+\sum_{n\ge1} \frac{q^{n(n-1)}}{\left(-q^2;q^2\right)_n} \sum_{k=0}^{n-1} \frac{q^{2k-1}}{1+q^{2k-1}} \right)\nonumber\\
%&\hspace{6 cm}-\sum_{n\ge1} \frac{q^{2n^2-3n}}{\left(-\frac{1}{q},-q^2;q^2\right)_n}.
%\end{align}	
%Next, applying \Cref{Donato} with $q\mapsto q^2$ to \eqref{Maineqn}, %it follows that
%\begin{equation*}
%\sum_{n\ge1} \frac{nq^{n(n-1)}}{\left(-q^2;q^2\right)_n}=\sum_{n\ge0} \frac{nq^{n(n-1)}}{\left(-q^2;q^2\right)_n} = \sigma\!\left(q^2\right)\!.
%\end{equation*}
%Plugging this into \eqref{Maineqn}, we obtain
\begin{align}\label{Verweis02}
\sum_{n\ge1}\frac{\left(-q^{2n+3};q^2\right)_\infty q^{2n}}{\left(q^{2n},q^{2n+3};q^2\right)_\infty}&=\frac{\left(-q^3;q^2\right)_\infty}{(q^2)_\infty}\left(1-\sigma\left(q^2\right)+\sum_{n\ge1} \frac{q^{n(n-1)}}{\left(-q^2;q^2\right)_n} \sum_{k=0}^{n-1} \frac{q^{2k-1}}{1+q^{2k-1}} \right)\nonumber\\
&\hspace{6 cm}-\sum_{n\ge1} \frac{q^{2n^2-3n}}{\left(-\frac{1}{q},-q^2;q^2\right)_n}.
\end{align}
We then simplify, using \Cref{claim}, %{\bf \color{blue} K: moved a step into details file.} %Simplifying the final sum on the right-hand side of \eqref{Verweis02}, we have 
\begin{align*}
\sum_{n\ge1} \frac{q^{2n^2-3n}}{\left(-\frac{1}{q},-q^2;q^2\right)_n}&%=\frac{1}{1+q}\sum_{n\ge1} \frac{q^{2n^2-3n+1}}{(-q;q^2)_{n-1}(-q^2;q^2)_n}
=\frac{1}{1+q}\sum_{n\ge1} \frac{q^{(2n-1)(n-1)}}{\left(1+q^{2n}\right)(-q)_{2n-2}}%.\nonumber
%\end{align*}
%Next, by \Cref{claim}, we obtain
%\begin{equation*}
%=\sum_{n\ge1} \frac{q^{2n^2-3n}}{\left(-\frac{1}{q},-q^2;q^2\right)_n}
=\frac{1}{1+q}\left(1-q\sum_{n\ge1}\frac{q^{2n^2-n}}{(-q)_{2n}}\right)\\
&=\frac{1}{1+q}\left(1-q\sum_{n\in\mathbb{Z}}\text{sgn}(n)q^{\frac{n(3n-1)}{2}}\right),
\end{align*}
where the last equality follows from \eqref{Neweqn2paper}. Plugging this into \eqref{Verweis02}, we obtain
\begin{align}\label{eqn0}
\sum_{n\ge1}\frac{\left(-q^{2n+3};q^2\right)_\infty q^{2n}}{\left(q^{2n},q^{2n+3};q^2\right)_\infty}&=\frac{\left(-q^3;q^2\right)_\infty}{(q^2)_\infty}\left(1-\sigma\left(q^2\right)+\sum_{n\ge1} \frac{q^{n(n-1)}}{\left(-q^2;q^2\right)_n} \sum_{k=0}^{n-1} \frac{q^{2k-1}}{1+q^{2k-1}} \right)\nonumber\\
&\hspace{4 cm}-\frac{1}{1+q}\left(1-q\sum_{n\in\mathbb{Z}}\text{sgn}(n)q^{\frac{n(3n-1)}{2}}\right).
\end{align}
Rearranging the double sum on the left-hand side of \eqref{eqn0}, we have
\begin{align}\label{eqn1n}
&\sum_{n\ge1}\frac{q^{n(n-1)}}{\left(-q^2;q^2\right)_n} \sum_{k=0}^{n-1} \frac{q^{2k-1}}{1+q^{2k-1}}%\nonumber\\%=\sum_{k\ge 0}\frac{q^{2k-1}}{1+q^{2k-1}}\sum_{n\ge k+1}\frac{q^{n(n-1)}}{\left(-q^2;q^2\right)_n}\nonumber\\
%&=\sum_{k\ge 0}\frac{q^{2k-1}}{1+q^{2k-1}}\sum_{n\ge 0}\frac{q^{\left(n+k+1\right)^2-\left(n+k+1\right)}}{\left(-q^2;q^2\right)_{n+k+1}}=\sum_{k\ge 0}\frac{q^{k^2+3k-1}}{\left(1+q^{2k-1}\right)\left(-q^2;q^2\right)_{k+1}}\sum_{n\ge 0}\frac{q^{n^2+\left(2k+1\right)n}}{\left(-q^{2k+4};q^2\right)_n}\nonumber\\
%&\hspace{3.5 cm}
=\sum_{k\ge 0}\frac{q^{k^2+3k-1}}{\left(1+q^{2k-1}\right)\left(-q^2;q^2\right)_{k+1}}\lim_{t\to 0}\sum_{n\ge 0}\frac{\left(-\frac{q}{t};q^2\right)_n \left(tq^{2k+1}\right)^n}{\left(-q^{2k+4};q^2\right)_n}.
\end{align}
Applying \Cref{Heine} with $q\mapsto q^2$, $a= -\frac{q}{t}, b= q^2,\ \text{and}\ c= -q^{2k+4}$, we have %{\bf KB: I get a $t$. \color{blue} K: Yes, fixed it.}
\[
\lim_{t\to 0}\sum_{n\ge 0}\frac{\left(-\frac{q}{t};q^2\right)_n \left(tq^{2k+1}\right)^n}{\left(-q^{2k+4};q^2\right)_n}=\lim_{t\to0}\frac{\left(tq^{2k+3},-q^{2k+2};q^2\right)_\infty}{(-q^{2k+4},tq^{2k+1};q^2)_\infty}%=\frac{\left(-q^{2k+2};q^2\right)_{\infty}}{\left(-q^{2k+4};q^2\right)_{\infty}}
=1+q^{2k+2}.
\]
Consequently, from \eqref{eqn1n}, it follows that %{\bf KB: To here (up to the first step of the following equation).}
\begin{align*}%\label{eqn2n}
&\sum_{n\ge1} \frac{q^{n(n-1)}}{\left(-q^2;q^2\right)_n} \sum_{k=0}^{n-1} \frac{q^{2k-1}}{1+q^{2k-1}}%=\sum_{k\ge 0}\frac{q^{k^2+3k-1}\left(1+q^{2k+2}\right)}{\left(1+q^{2k-1}\right)\left(-q^2;q^2\right)_{k+1}}
=\sum_{k\ge 0}\frac{q^{k^2+3k-1}}{\left(1+q^{2k-1}\right)\left(-q^2;q^2\right)_k}%\nonumber\\
%=\frac{q^{-1}}{1+q^{-1}}+\sum_{k\ge 1}\frac{q^{k^2+3k-1}}{\left(1+q^{2k-1}\right)\left(-q^2;q^2\right)_k}\nonumber
=\frac{1}{1+q}+\sum_{k\ge 0}\frac{q^{k^2+5k+3}}{\left(1+q^{2k+1}\right)\left(-q^2;q^2\right)_{k+1}}\\
%&=\frac{1}{1+q}+\sum_{k\ge 0}\frac{q^{k^2+3k+2}\left(\left(1\!+\!q^{2k+1}\right)\!-\!1\right)}{\left(1\!+\!q^{2k+1}\right)\left(-q^2;q^2\right)_{k+1}}
%=\frac{1}{1+q}+\sum_{k\ge 0}\frac{q^{k^2+3k+2}}{(-q^2;q^2)_{k+1}}-\sum_{k\ge 0}\frac{q^{k^2+3k+2}}{(1\!+\!q^{2k+1})(-q^2;q^2)_{k+1}}\nonumber\\
&\hspace{3.5 cm}=\frac{1}{1+q}+\sum_{k\ge 1}\frac{q^{k(k+1)}}{\left(-q^2;q^2\right)_{k}}-\sum_{k\ge 0}\frac{q^{k^2+3k+2}}{\left(1+q^{2k+1}\right)\left(-q^2;q^2\right)_{k+1}}\nonumber\\
%&=\frac{1}{1+q}+\sum_{k\ge 1}\frac{q^{k(k+1)}}{\left(-q^2;q^2\right)_{k}}-\sum_{k\ge 0}\frac{q^{k(k+1)+1}\left(\left(1+q^{2k+1}\right)-1\right)}{\left(1+q^{2k+1}\right)\left(-q^2;q^2\right)_{k+1}}\nonumber\\
&\hspace{3.5 cm}=\frac{1}{1+q}+\sum_{k\ge 1}\frac{q^{k(k+1)}}{\left(-q^2;q^2\right)_{k}}-q\sum_{k\ge 0}\frac{q^{k(k+1)}}{\left(-q^2;q^2\right)_{k+1}}+\sum_{k\ge 0}\frac{q^{k(k+1)+1}}{\left(1+q^{2k+1}\right)\left(-q^2;q^2\right)_{k+1}}\nonumber\\
%\intertext{\bf \color{blue} K: moved a step to details file.}
%&\hspace{-0.9 cm}\os{\substack{\text{\Cref{kang} paper}\\(q\mapsto q^2)}}{=}
%&=\frac{1}{1+q}+\sum_{k\ge 1}\frac{q^{k(k+1)}}{\left(-q^2;q^2\right)_{k}}-q+\sum_{k\ge 0}\frac{q^{k(k+1)+1}}{\left(1+q^{2k+1}\right)\left(-q^2;q^2\right)_{k+1}}\nonumber\\
%&\hspace{-0.4 cm}\os{\substack{\text{\eqref{sigma function} paper}\\(q\mapsto q^2)}}{=}
&\hspace{3.5 cm}=\frac{1}{1+q}+\sigma\left(q^2\right)-1-q+\sum_{k\ge 0}\frac{q^{k(k+1)+1}}{\left(1+q^{2k+1}\right)\left(-q^2;q^2\right)_{k+1}},
\end{align*}
using \Cref{kang} and \eqref{sigma function}. Thus, %the right-hand side of \eqref{eqn0} becomes
\begin{equation}\label{eqn4n}
1-\sigma\!\left(q^2\right)\! + \sum_{n\ge1} \frac{q^{n(n-1)}}{\left(-q^2;q^2\right)_n} \sum_{k=0}^{n-1} \frac{q^{2k-1}}{1+q^{2k-1}}=\frac{1}{1+q}-q+q\sum_{k\ge 0}\frac{q^{k(k+1)}}{\left(1+q^{2k+1}\right)\left(-q^2;q^2\right)_{k+1}}.
\end{equation}
Letting $k\mapsto k-1$, we have, using \Cref{kang}, \eqref{above3.6}, and \eqref{corson}, %{\bf \color{blue} K: moved a step in the details file.}
\begin{align*}%\label{transform}
\sum_{k\ge 0}\frac{q^{k(k+1)}}{\left(1+q^{2k+1}\right)\left(-q^2;q^2\right)_{k+1}}%\\%=\sum_{k\ge 1}\frac{q^{k(k-1)}}{\left(1+q^{2k-1}\right)\left(-q^2;q^2\right)_{k}}\nonumber\\%=\sum_{k\ge 1}\frac{q^{k(k-1)}\left(1+q^{2k-1}-q^{2k-1}\right)}{\left(1+q^{2k-1}\right)\left(-q^2;q^2\right)_{k}}\nonumber\\
%&=\sum_{k\ge 1}\frac{q^{k(k-1)}}{\left(-q^2;q^2\right)_{k}}-q^{-1}\sum_{k\ge 1}\frac{q^{k(k+1)}}{\left(1+q^{2k-1}\right)\left(-q^2;q^2\right)_{k}}\nonumber\\
&=\sum_{k\ge 0}\frac{q^{k(k+1)}}{\left(-q^2;q^2\right)_{k+1}}-q^{-1}\sum_{k\ge 1}\frac{q^{k(k+1)}}{\left(1+q^{2k-1}\right)\left(-q^2;q^2\right)_{k}}\\%\nonumber\\
&=1-q^{-1}\sum_{k\ge 1}\frac{q^{k(k+1)}}{\left(1+q^{2k-1}\right)\left(-q^2;q^2\right)_{k}}%\nonumber\\
%&=1-q^{-1}\left(-\frac{q}{1-q}\sum_{k\ge 1}\frac{(-1)^k \left(q\right)_{k}q^{\frac{k(k+1)}{2}}}{\left(-q\right)_{k}}\right)\nonumber\\
%=1+\frac{1}{1-q}\sum_{k\ge 1}\frac{(-1)^k \left(q\right)_{k}q^{\frac{k(k+1)}{2}}}{\left(-q\right)_{k}}=1+\frac{1}{1-q}\left(W_1(q)-1\right)
=\frac{1}{1-q}\left(W_1(q)-q\right).
\end{align*}
%using \Cref{kang} in the third step, and \eqref{above3.6} and \eqref{corson} in the final step.
Plugging this into \eqref{eqn4n}, we obtain
\begin{align*}
1-\sigma\!\left(q^2\right)\!+ \sum_{n\ge1} \frac{q^{n(n-1)}}{\left(-q^2;q^2\right)_n} \sum_{k=0}^{n-1} \frac{q^{2k-1}}{1+q^{2k-1}}%=\frac{1}{1+q}-q+\frac{q}{1-q}\left(W_1(q)-q\right)\\
%&\hspace{2 cm}=\frac{1}{1+q}-q-\frac{q^2}{1-q}+\frac{q}{1-q}W_1(q)
=\frac{q}{1-q}W_1(q)+\frac{1-2q-q^2}{1-q^2}.%\frac{1}{1-q^2}\left(q\left(1+q\right)W_1(q)+1-2q-q^2\right).
\end{align*}
Plugging this into \eqref{eqn0} and using \eqref{genfuncnew}, we conclude the proof.
\end{proof}

%We are now ready to show Theorem \ref{thm4}.

\begin{proof}[Proof of Theorem \ref{thm4}]
From \eqref{zgen3}, we have
\begin{align}\label{thm4eqn1}
\overline{P}_{\textup{ev}}(\z;q)%&=\sum_{n\ge 0}\frac{\left(-\z q^{2n+3}, -q^{2n+2};q^2\right)_{\infty}\z q^{2n+1}}{\left(\z q^{2n+2};q^2\right)_{\infty}}+\sum_{n\ge 0}\frac{\left(-\z q^{2n+3}, -q^{2n+2};q^2\right)_{\infty}\z q^{2n+2}}{\left(\z q^{2n+2};q^2\right)_{\infty}}\nonumber\\
%&=\z q\left(1+q\right)\sum_{n\ge 0}\frac{\left(-q^{2n+2},-\z q^{2n+3};q^2\right)_{\infty} q^{2n}}{\left(\z q^{2n+2};q^2\right)_{\infty}}\nonumber\\
&=\frac{\z q\left(1+q\right)\left(-q^2,-\z q^3;q^2\right)_{\infty}}{\left(\z q^2;q^2\right)_{\infty}}\sum_{n\ge 0}\frac{\left(\z q^2;q^2\right)_n q^{2n}}{\left(-q^2,-\z q^3;q^2\right)_n}.
\end{align}
Applying \Cref{lemident2} with $q\mapsto q^2, a=1, b=\z q$, and $c=\z$, we have 
\begin{align}\label{thm4eqn2}
\sum_{n\ge 0}\frac{\left(\z q^2;q^2\right)_n q^{2n}}{\left(-q^2,-\z q^3;q^2\right)_n}%\!=\!%2\!\sum_{n \ge 0}\frac{\left(\z q^2;q^2\right)_n (-1)^n \z^n q^{n^2+2n}}{\left(-\z q^2;q^2\right)_{n+1}\left(-\z q^3;q^2\right)_n}\!-\!\frac{\left(\z q^2;q^2\right)_{\infty}}{\left(-q^2, -\z q^3;q^2\right)_{\infty}}\!\sum_{n\ge 0}\frac{\left(-\z q\right)^nq^{n(n+1)}}{\left(-\z q^2;q^2\right)_{n+1}}\nonumber\\
%&=2\left(-\z q\right)^{-1}\sum_{n \ge 0}\frac{\left(\z q^2;q^2\right)_n \left(-\z\right)^{n+1} q^{\left(n+1\right)^2}}{\left(-\z q^2;q^2\right)_{n+1}\left(-\z q^3;q^2\right)_n}-\frac{\left(\z q^2;q^2\right)_{\infty}\left(-\z q\right)^{-1}}{\left(-q^2, -\z q^3;q^2\right)_{\infty}}\sum_{n\ge 0}\frac{q^{\left(n+1\right)^2}\left(-\z\right)^{n+1}}{\left(-\z q^2;q^2\right)_{n+1}}\nonumber\\
%&\hspace{-0.3cm}\overset{n\mapsto n-1}{=}-2\left(\z q\right)^{-1}\sum_{n \ge 1}\frac{\left(\z q^2;q^2\right)_{n-1} (-1)^n \z^n q^{n^2}}{\left(-\z q^2;q^2\right)_{n}\left(-\z q^3;q^2\right)_{n-1}}+\frac{\left(\z q^2;q^2\right)_{\infty}\left(\z q\right)^{-1}}{\left(-q^2, -\z q^3;q^2\right)_{\infty}}\sum_{n\ge 1}\frac{q^{n^2}\left(-\z\right)^{n}}{\left(-\z q^2;q^2\right)_{n}}\nonumber\\
=\!-\frac{2\left(1+\z q\right)}{\z q}\!\sum_{n \ge 1}\!\frac{\left(\z q^2;q^2\right)_{n-1} (-1)^n\z^n q^{n^2}}{\left(-\z q,-\z q^2;q^2\right)_{n}}&\nonumber\\
&\hspace{-3.5 cm}\!+\!\frac{\left(\z q^2;q^2\right)_{\infty}}{\z q\left(-q^2, -\z q^3;q^2\right)_{\infty}}\!\sum_{n\ge 1}\frac{(-1)^n\z^{n}q^{n^2}}{\left(-\z q^2;q^2\right)_{n}}.
\end{align}
Applying \Cref{BEMeqn} with $q\mapsto q^2, b=-\z q, c=-\z$, and $d=-1$, it follows that %{\bf \color{blue} K: I have tried to put in one line and equation fits into that perfectly, problem is label goes below th equation in the empty space.}
\begin{align}\label{neweqn3}
\sum_{n\ge 1}\frac{\left(-\frac{\z q}{a}, \z;q^2\right)_n\!\left(-a\right)^n}{\left(-\z q, -\z q^2;q^2\right)_n}\!=\!\frac{\left(a\!+\!\z q\right)\left(-1\!+\!\z\right)}{-a\!+\!\z q}\!\sum_{n\ge 0}\frac{\left(a,-q;q^2\right)_n(-\z)^n}{\left(-\z q,-a;q^2\right)_n}\left(\frac{-a q^{2n}}{1\!+\!a q^{2n}}\!+\!\frac{\z q^{2n+1}}{1\!+\!\z q^{2n+1}}\right).
\end{align}
Letting $a\to 0$ on the left-hand side of \eqref{neweqn3}, we have, using $\lim_{a\to0} (-\frac{\zeta q}{a}; q^2)_n a^n = \zeta^n q^{n^2}$, 
\begin{equation}\label{neweqn1}
\lim\limits_{a\to 0}\sum_{n\ge 1}\frac{\left(-\frac{\z q}{a}, \z;q^2\right)_n \left(-a\right)^n}{\left(-\z q, -\z q^2;q^2\right)_n}
=\left(1-\z\right)\sum_{n\ge 1}\frac{\left(\z q^2;q^2\right)_{n-1}(-1)^n \z^n q^{n^2}}{\left(-\z q, -\z q^2;q^2\right)_n}.
\end{equation}
Letting $a\to 0$ on the right-hand side of \eqref{neweqn3} and substituting the resulting identity and \eqref{neweqn1} into \eqref{neweqn3} and then dividing by $1-\z$, we get
\[
\sum_{n \ge 1}\frac{\left(\z q^2;q^2\right)_{n-1} (-1)^n \z^n q^{n^2}}{\left(-\z q^2,-\z q;q^2\right)_{n}}=-\frac{\z q}{1+\z q}\sum_{n\ge 0}\frac{\left(-q;q^2\right)_n (-1)^n\z^nq^{2n}}{\left(-\z q^3;q^2\right)_n}.
\]
Plugging this into \eqref{thm4eqn2}, we then obtain
\begin{align*}%\label{thm4eqn3}
&\sum_{n\ge 0}\frac{\left(\z q^2;q^2\right)_n q^{2n}}{\left(-q^2,-\z q^3;q^2\right)_n}\!=\!2\sum_{n\ge 0}\frac{\left(-q;q^2\right)_n(-1)^n\z^nq^{2n}}{\left(-\z q^3;q^2\right)_n}+\frac{\left(\z q^2;q^2\right)_{\infty}}{\z q\left(-q^2, -\z q^3;q^2\right)_{\infty}}\!\sum_{n\ge 1}\frac{(-1)^n \z^nq^{n^2}}{\left(-\z q^2;q^2\right)_{n}}\\
&= \frac{\left(q,\z q^2\right)_{\infty}\left(\frac{1}{\z q};q^2\right)_{\infty}}{\left(-\z q^2\right)_{\infty}}+\!\left(1+\frac{1}{\z q}\right)\sum_{n\ge 0}\frac{\left(-\frac{1}{\z};q^2\right)_n(-1)^n q^{n}}{\left(-q^2;q^2\right)_n}+\frac{\left(\z q^2;q^2\right)_{\infty}}{\z q\left(-q^2, -\z q^3;q^2\right)_{\infty}}\!\sum_{n\ge 1}\frac{(-1)^n \z^nq^{n^2}}{\left(-\z q^2;q^2\right)_{n}}.
\end{align*}
%{\bf \color{blue} K: proof to be continued...}
using \Cref{Agarwal}. %{\bf \color{blue} K: modified the first term of the right-hand side according to the new representation of \Cref{Agarwal}.}
%\begin{align*}
%\sum_{n\ge 0}\frac{\left(\z q^2;q^2\right)_n q^{2n}}{\left(-q^2,-\z q^3;q^2\right)_n}=\frac{\left(q,\z q^2\right)_{\infty}\left(\frac{1}{\z q};q^2\right)_{\infty}}{\left(-\z q^2\right)_{\infty}}%\frac{\left(\z q^2, q^2, \z q^3, \frac{1}{\z q};q^2\right)_{\infty}}{\left(-q,-\z q^3,-\z q^2,-q^2;q^2\right)_{\infty}}\!
%&+\!\left(1+\frac{1}{\z q}\right)\sum_{n\ge 0}\frac{\left(-\frac{1}{\z};q^2\right)_n(-1)^n q^{n}}{\left(-q^2;q^2\right)_n}\\
%&\hspace{-0.7 cm}+\frac{\left(\z q^2;q^2\right)_{\infty}\left(\z q\right)^{-1}}{\left(-q^2, -\z q^3;q^2\right)_{\infty}}\!\sum_{n\ge 1}\frac{q^{n^2}\left(-\z\right)^{n}}{\left(-\z q^2;q^2\right)_{n}}.
%\end{align*}
Consequently, by \eqref{thm4eqn1}, we get 
\begin{align}\label{thm4neweqn1}
	\overline{P}_{\textup{ev}}(\z;q)=\frac{\z q\left(q^2,\z q^3,\frac{1}{\z q};q^2\right)_{\infty}}{\left(-q^3,-\z q^2;q^2\right)_{\infty}}&+\frac{\left(1+q\right)\left(-q^2,-\z q;q^2\right)_{\infty}}{\left(\z q^2;q^2\right)_{\infty}}\sum_{n\ge 0}\frac{\left(-\frac{1}{\z};q^2\right)_n(-1)^n q^{n}}{\left(-q^2;q^2\right)_n}\nonumber\\
&\hspace{2 cm}+\left(1+q\right)\sum_{n\ge 1}\frac{(-1)^n \z^n q^{n^2}}{\left(-\z q^2;q^2\right)_n}.
\end{align}
Using the definition of $\nu$ %$\psi$ $(\z_1,\z_2)=(q^2,iq\z^{\frac 12})$, and $\phi$ $(\z_1,\z_2)=(-q,iq\z^{\frac 12})$ 
from \eqref{Choi2}, we conclude the proof.  
\end{proof} 
%we conclude the claim.
%This concludes the claim.

%Similar to Proposition 7.1, we can represent the last function occurring on the right-hand side of Theorem \ref{thm4} in terms of Choi's $\nu(\zeta_1, \zeta_2;q)$, $\psi(\zeta_1, \zeta_2;q)$, and $\phi(\zeta_1, \zeta_2;q)$.

%We next prove \Cref{cor7.4}.

\begin{proof}[Proof of \Cref{cor7.4}]
	
Plugging $\z=-q$ into %the first identity of 
	\Cref{thm4}, \eqref{Choi2}, and \eqref{ramanujannu}, we have 
	\begin{align}\label{cor7.4eqn1}
	\overline{P}_{\textup{ev}}(-q;q)&=-(1+q)-\frac{2\left(-q^2;q^4\right)_{\infty}\left(q^8;q^8\right)_{\infty}}{(q^6;q^4)_{\infty}}+\frac{\left(1+q\right)\left(q^4;q^4\right)_{\infty}}{(-q^3;q^2)_{\infty}}\sum_{n\geq0}\frac{\left(\frac 1q;q^2\right)_n(-1)^n q^n}{(-q^2;q^2)_n}\nonumber\\
	&\hspace{8.5 cm}+\left(1-q^2\right)\nu(-q).
	\end{align}	
	Using \Cref{RF} with $q\mapsto q^2, \a= \frac{1}{q}, \b=-q^2$, and $t=-q$, we obtain 
	\[
	\sum_{n\geq0}\frac{\left(\frac 1q;q^2\right)_n\!(-1)^nq^n}{(-q^2;q^2)_n}=\frac{2}{1+q},
	\]
employing the fact $(1;q^2)_n=0$ for $n\in \mathbb{N}$.	Plugging this into \eqref{cor7.4eqn1}, we conclude the proof.	
\end{proof}	

%{\bf \color{blue} K: Here added the proof of \Cref{newthm2}.} 

%Next, we prove \Cref{newthm2}.

\begin{proof}[Proof of \Cref{newthm2}]
Using \eqref{Choi2}, we obtain 
\[
\left(1+\z\right)\nu\left(iq,i\left(\z q\right)^{\frac 12};q\right)%=\left(1+\z\right)\sum_{n\ge 0}\frac{(-1)^n \z^n q^{n^2}}{\left(-\z;q^2\right)_{n+1}}
=\sum_{n\ge 0}\frac{(-1)^n \z^n q^{n^2}}{\left(-\z q^2;q^2\right)_{n}}.
\]	
Plugging this into \Cref{thm4} with $\z=-1$, we obtain 
\begin{align*}%\label{newthm2eqn1}
\overline{P}_{\textup{ev}}(-1;q)%&=\frac{-q\left(q^2,-q^3,-\frac{1}{ q};q^2\right)_{\infty}}{\left(-q^3,q^2;q^2\right)_{\infty}}+\frac{\left(1+q\right)\left(-q^2, q;q^2\right)_{\infty}}{\left(-q^2;q^2\right)_{\infty}}\sum_{n\ge 0}\frac{\left(1;q^2\right)_n(-1)^n q^{n}}{\left(-q^2;q^2\right)_n}\nonumber\\
%&\hspace{6.5 cm}+\left(1+q\right)\left(\sum_{n\ge 0}\frac{ q^{n^2}}{\left( q^2;q^2\right)_{n}}-1\right)\nonumber\\
%&=-q\left(1+\frac 1q\right)\left(-q;q^2\right)_{\infty}+(1+q)\left(q;q^2\right)_{\infty}+(1+q)\left(\sum_{n\ge 0}\frac{ q^{n^2}}{\left( q^2;q^2\right)_{n}}-1\right)\nonumber\\
&=-\left(1+q\right)\left(-q;q^2\right)_{\infty}+(1+q)\left(q;q^2\right)_{\infty}+(1+q)\left(\sum_{n\ge 0}\frac{ q^{n^2}}{\left( q^2;q^2\right)_{n}}-1\right).
\end{align*}
%{\bf \color{blue} K: Placed the rest into the details file.} 
Applying \Cref{Euler} %with $q\mapsto q^2$, and $t=q$, it follows that
%\[
%\left(-q;q^2\right)_{\infty}=\sum_{n\ge 0}\frac{q^{n^2}}{\left(q^2;q^2\right)_n}.
%\]
%Applying this
 to this, %\eqref{newthm2eqn1}
  we conclude the proof. %{\bf KB: I feel this an go to details. \color{blue} K: Okay but let address the above issue first.}
%\begin{align*}
%&\overline{P}_{\textup{ev}}(-1;q)=\left(1+q\right)\left(\sum_{n\ge 0}\frac{(-1)^n q^{n^2}}{\left(q^2;q^2\right)_n}-\sum_{n\ge 0}\frac{q^{n^2}}{\left(q^2;q^2\right)_n}-1\right)=\left(1+q\right)\left(-2\sum_{n\ge 0}\frac{q^{\left(2n+1\right)^2}}{\left(q^2;q^2\right)_{2n+1}}-1\right)\\
%&=-\left(1+q\right)\left(2q\sum_{n\ge 0}\frac{q^{4n(n+1)}}{\left(q^2;q^2\right)_{2n+1}}+1\right)=-\left(1+q\right)\left(1+\frac{2q}{\left(q^2,q^8,q^{12},q^{14}, q^{18}, q^{20}, q^{24}, q^{30};q^{32}\right)_{\infty}}\right),
%\end{align*}
%using \Cref{SlaterId} with $q\mapsto q^2$ in the final step. This yields the corollary.
\end{proof}
%{\bf \color{blue} K: up to here, proof added.}

%{\bf \color{blue} K: Here added the proof of \Cref{RamanujanIdentity} from the extra file.}

%Next, we prove \Cref{RamanujanIdentity}.

\begin{proof}[Proof of \Cref{RamanujanIdentity}]
Using \eqref{zgen3}, splitting off the terms $n=1$ on both of the sums on the left-hand side of \eqref{thm4neweqn1}, rearranging, and letting $\z\to q^{-2}$, we obtain	
\begin{align}\label{R2}
&\sum_{n\ge 1}\frac{\left(-q^{2n+2},-q^{2n+1};q^2\right)_{\infty} q^{2n-1}}{\left( q^{2n};q^2\right)_{\infty}}+\sum_{n\ge 1}\frac{\left(-q^{2n+2},- q^{2n+1};q^2\right)_{\infty} q^{2n}}{\left( q^{2n};q^2\right)_{\infty}}-\frac{\left(q^2,q,q;q^2\right)_{\infty}}{q\left(-q^3,-1;q^2\right)_{\infty}}\nonumber\\
&\hspace{7 cm}-\left(1+q\right)\sum_{n\ge 1}\frac{(-1)^n q^{n^2-2n}}{\left(-1;q^2\right)_{n}}=\frac{\mathcal{L}(q)}{\left(q^2;q^2\right)_{\infty}},%\\
%&=\frac{\left(1+q\right)\left(-q^2,-\z q;q^2\right)_{\infty}}{\left(\z q^2;q^2\right)_{\infty}}\sum_{n\ge 0}\frac{\left(-\frac{1}{\z};q^2\right)_n(-1)^n q^{n}}{\left(-q^2;q^2\right)_n}-\frac{\left(-q^{2},-\z q^{3};q^2\right)_{\infty}\z q\left(1+q\right)}{\left(\z q^{2};q^2\right)_{\infty}}.
\end{align}
where
\begin{equation}\label{L}
\mathcal{L}(q)\!:=\!\lim_{\z\to q^{-2}}\!\frac{\left(1+q\right)\left(-q^2,-\z q;q^2\right)_{\infty}\sum_{n\ge 0}\frac{\left(-\frac{1}{\z};q^2\right)_n(-1)^n q^{n}}{\left(-q^2;q^2\right)_n}-\left(-q^{2},-\z q^{3};q^2\right)_{\infty}\z q\left(1+q\right)}{1-\z q^2}.
\end{equation}
Applying L'Hospital, \Cref{AGLeqn}, \eqref{Ramtheta}, and \eqref{L}, we obtain
\[
\frac{\mathcal{L}(q)}{\left(q^2;q^2\right)_{\infty}}=%\frac{\left(1+q\right)(-q)_{\infty}\left(1-\varphi^2(-q)\right)}{4q\left(q^2;q^2\right)_{\infty}}=
\frac{\left(1\!+\!q\right)\left(1-\Theta^2(-q)\right)}{4q\left(q\right)_{\infty}}.
\]
 Plugging this into \eqref{R2}, using \eqref{Ramanujanphi}, using \eqref{Ramtheta}, \cite[equation (1.6)]{andrews-kumar}, \cite[equation (26.27)]{Fine}, and \eqref{Ramanujanf}, we conclude the proof.\qedhere  
\end{proof}

%{\bf \color{blue} K: Up to here, the proof added.}

%We end this section with the proof of Theorem \ref{thm3}.

\begin{proof}[Proof of \Cref{thm3}] By \Cref{lemident1} with $q\mapsto q^2$, $a=-\z q^{-1}, b=-\z^{-1}q^{-1}$, and $c=q^{-1}$,  
\begin{equation*}
\sum_{n\ge 0}\!\frac{\left(\z q,\z^{-1}q;q^2\right)_nq^{2n+2}}{\left(-q;q^2\right)_n}\!=\!\sum_{n\ge 1}\!\frac{\left(-q;q^2\right)_n\!q^{n^2+1}}{\left(-\z q^2,-\z^{-1}q^2;q^2\right)_n}-\frac{\left(\z q,\z^{-1}q;q^2\right)_{\infty}}{\left(-q;q^2\right)_{\infty}}\!\sum_{n\ge 1}\!\frac{q^{2n^2+1}}{\left(-\z q^2,-\z^{-1}q^2;q^2\right)_n}.
\end{equation*}
Applying \eqref{rank1} and \eqref{rank2}, we get %{\bf \color{blue} K: Moved a step in the details file. \color{black} KB: To check.}
\begin{align*}%\label{thm3eqn1}
\sum_{n\ge 0}\!\frac{\left(\z q,\z^{-1}q;q^2\right)_nq^{2n}}{\left(-q;q^2\right)_n}%&=\sum_{n\ge 1}\frac{\left(-q;q^2\right)_nq^{n^2-1}}{\left(-\z q^2,-\z^{-1}q^2;q^2\right)_n}-\frac{\left(\z q,\z^{-1}q;q^2\right)_{\infty}}{\left(-q;q^2\right)_{\infty}}\sum_{n\ge 1}\frac{q^{2n^2-1}}{\left(-\z q^2,-\z^{-1}q^2;q^2\right)_n}\nonumber\\
%&=\!q^{-1}\!\left(\sum_{n\ge 1}\!\frac{\left(-q;q^2\right)_n\!q^{n^2}}{\left(-\z q^2,-\z^{-1}q^2;q^2\right)_n}\!-\!\frac{\left(\z q,\z^{-1}q;q^2\right)_{\infty}}{\left(-q;q^2\right)_{\infty}}\!\sum_{n\ge 1}\!\frac{q^{2n^2}}{\left(-\z q^2,-\z^{-1}q^2;q^2\right)_n}\right)\nonumber\\
%&\hspace{-0.2cm}\os{\substack{\eqref{rank2}\\\eqref{rank1}}}
&=q^{-1}\left(\operatorname{R2}\left(-\z;q\right)-1-\frac{\left(\z q,\z^{-1}q;q^2\right)_{\infty}}{\left(-q;q^2\right)_{\infty}}\left(R\left(-\z;q^2\right)-1\right)\right).
\end{align*}
Using this and \eqref{zgen1}, we then conclude the proof. 
\end{proof}

\section{Concluding Remarks}\label{cr}
  The results of this paper suggest several directions for further investigation. In particular, it would be interesting to understand more systematically when restrictions on overpartitions lead to generating functions exhibiting mixed modular structures. Another natural problem is to study whether similar phenomena occur for other partition statistics or colored partition families. The results of this paper suggest several directions for further investigation. %We propose the following two further questions for future research.
 %Here we raise the following three questions. %{\bf \color{blue} K: following our discussion, rewrote the first question.} %regarding modularity of $f(\z_1)$ bi-variate $q$-hypergeometric series appeared in Theorems \ref{Thm1} and \ref{thm4}.
\begin{enumerate}
	\item In Theorems \ref{Thm1} and \ref{thm4}, for special choices of $\z$, the functions $P_{\textup{od}}(\z;q)$ and $P_{\textup{ev}}(\z;q)$ are connected to various modular objects. Do $f(\z q, q^2;q^2)$ and $\nu(iq,i(\z q)^{\frac 12};q)$ fit into the theory of mock modular forms if $\z$ is a root of unity? In \Cref{Thm1} and \Cref{thm4}, for specific choices of $\z$, we study the modularity of 
	\[
	F_1(\z;q):=\sum_{n\ge 1}\!\frac{\left(-\frac 1q;q^2\right)_n\!\z^nq^{n(n+1)}}{\left(-\z q,-q^2;q^2\right)_n}\ \ \text{and}\ \ F_2(\z;q):=\sum_{n\ge 0}\frac{\left(-\frac{1}{\z};q^2\right)_n(-1)^n q^{n}}{\left(-q^2;q^2\right)_n}.
	\]
 Are $F_1(\z;q)$ or $F_2(\z;q)$ modular for other choices?%for $\z\in \mathbb{C}\setminus\{q^{m}:m\in \mathbb{Z}\}$?  
 \item Give a bijective proof of \eqref{reln}.
 \item  The generating functions studied here exhibit combinations of several different modular objects, including modular forms, mock theta functions, mock Maass theta functions, and false theta functions. It would be interesting to understand more systematically when restrictions on overpartitions lead to generating functions with such mixed modular structures.
 \item Another direction would be to define and study additional rank or crank-type statistics for the restricted overpartitions considered here and to investigate the modular properties of their generating functions.
 \item In this paper, we determine the asymptotic main terms of several counting functions. It would be interesting to obtain more precise asymptotic expansions, possibly using the Circle Method or modular transformations.
 \item It would be interesting to see whether the restricted overpartition functions studied here satisfy arithmetic congruences analogous to those known for the classical partition function.
\end{enumerate}

\end{document}